\documentclass[10pt,a4paper,reqno]{amsart}
\usepackage{amsfonts}
\usepackage{amsthm}
\usepackage{amsmath}
\usepackage{mathtools}
\usepackage{amscd}
\usepackage[utf8]{inputenc}
\usepackage{t1enc}
\usepackage[mathscr]{eucal}
\usepackage{indentfirst}
\usepackage{graphicx}
\usepackage{graphics}
\usepackage{esint}
\usepackage{pict2e}
\usepackage{epic}
\usepackage{float}
\usepackage{MnSymbol}
\usepackage{multirow}
\usepackage[table,xcdraw]{xcolor} 
\usepackage{enumitem} 
\usepackage[nocompress]{cite}
\usepackage{hyperref}
\usepackage{stmaryrd}
\usepackage{todonotes}
\usepackage{tcolorbox}
\numberwithin{equation}{section}
\usepackage[margin=2.9cm]{geometry}
\usepackage{epstopdf}
\usepackage{xargs}
 
\allowdisplaybreaks

\newmuskip\pFqmuskip
\newcommand*\pFq[6][8]{%
	\begingroup 
	\pFqmuskip=#1mu\relax
	\mathchardef\normalcomma=\mathcode`,
	\mathcode`\,=\string"8000
	\begingroup\lccode`\~=`\,
	\lowercase{\endgroup\let~}\pFqcomma
	{}_{#2}F_{#3}{\left[\genfrac..{0pt}{}{#4}{#5};#6\right]}%
	\endgroup
}
\newcommand{\pFqcomma}{{\normalcomma}\mskip\pFqmuskip}

\theoremstyle{plain}
\newtheorem{theorem}{Theorem}[section]
\newtheorem{lemma}[theorem]{Lemma}
\newtheorem{corollary}[theorem]{Corollary}
\newtheorem{proposition}[theorem]{Proposition}

\theoremstyle{definition}

\newtheorem{remark}[theorem]{Remark}

\newtheorem{example}[theorem]{Example}

\newcommand{\Res}[1]{\underset{#1}{\operatorname{Res}} \ }

\newcommand{\A}{\mathcal A}
\renewcommand{\L}{\mathcal L}
\newcommand{\y}{\boldsymbol{y}}
\newcommand{\z}{\boldsymbol{z}}
\newcommand{\Ch}{\operatorname{Ch}}
\newcommand{\ind}{\mathbf 1}
\renewcommand{\Res}[1]{\underset{#1}{\operatorname{\: Res \:}}}

\newcommand{\Frac}{\operatorname{Frac}}
\newcommand{\m}{\boldsymbol{m}}
\newcommand{\FP}{\operatorname{FP}}
\renewcommand{\Re}{\operatorname{Re}}

\newcommand{\e}{\mathbf{e}}

\newcommandx{\change}[2][1=]{\todo[linecolor=blue,backgroundcolor=blue!25,bordercolor=blue,#1]{#2}}

\title[Hyperplane arrangements and the Witten zeta function]{Combinatorics of hyperplane arrangements and Witten zeta function at the origin}
\author{Kam Cheong Au}
\address[Kam Cheong Au]{University of Cologne, Department of Mathematics and Computer Science, Weyertal 86-90, 50931 Cologne, Germany}
\email{kau@uni-koeln.de}
\author{Kazuhiro Onodera}
\address[Kazuhiro Onodera]{Division of Mathematics, Chiba Institute of Technology, 2-1-1 Shibazono, Narashino, Chiba 275-0023, Japan}
\email{onodera@chibatech.ac.jp}

\date{\today}

\subjclass[2020]{Primary: 11M32, 17B22, 52C35. Secondary: 11M41, 32S22}

\keywords{Cone valuation, hyperplane arrangement, M\"obius function, root system, special value, Witten zeta function}

\begin{document}
	
	\maketitle
	
	\begin{abstract} We introduce a new method that brings the combinatorics of hyperplane arrangements into the study of representation zeta functions of compact Lie groups. For the Witten zeta function $\zeta_\Phi(s)$ associated with a root system $\Phi$, our method yields elegant formulas for $\zeta_\Phi(0)$ and $\zeta_\Phi'(0)$ in terms of the exponents of various parabolic subsystems of $\Phi$. Such formulas do not appear to be readily accessible through the conventional analytic techniques in the literature. More generally, the method applies to a broad family of conical zeta functions, expressing these two special values through the M\"obius function of the intersection poset of the associated hyperplane arrangement. 
	\end{abstract}
	
	\section{Introduction}
	Let $G$ be a compact Hausdorff topological group. Its \textit{representation zeta function} \cite{larsen2008representation} is defined by $$\zeta_G(s) := \sum_{\rho} \frac{1}{(\text{dim} \: \rho)^s},$$ where $\rho$ ranges over the finite-dimensional irreducible representations of $G$.
	When $G$ is a simply-connected compact Lie group with crystallographic root system $\Phi$, we write $\zeta_\Phi(s)$ for $\zeta_G(s)$; this is the \textit{Witten zeta function} associated with $\Phi$. For $G=\text{SU}(2)$, corresponding to $\Phi=A_1$, one has $\zeta_\Phi(s)=\zeta(s)$, the classical Riemann zeta function.
	
	Witten zeta functions have been studied from several perspectives, beginning with work of Witten \cite{witten1991quantum} and Zagier \cite{zagier1994values}. The analytic properties of their multivariable generalizations have received extensive investigation \cite{komori2023theory,matsumoto2006witten,komori2010witten,onodera2014functional, komori2015witten,komori2012witten,komori2020zeta,nakamura2006functional}. More recently, the first author uncovered a range of their arithmetic properties in \cite{au2024single,au2024vanishing}, including $p$-adic Kummer congruences, special values of residues, and special values at the origin and at negative integers. The Witten zeta function also occurs as the Archimedean factor of representation zeta functions of arithmetic subgroups; these representation zeta functions have received considerable attention \cite{avni2013representation, stasinski2014representation, blomer2026analytic}.
	
	A main goal of this article is to evaluate $\zeta_\Phi(0)$ and $\zeta_\Phi'(0)$. We will see that they encode non-trivial quantities associated to the root system $\Phi$. These two numbers also arise in asymptotic formulas for a family of partition functions associated with $G$. Such formulas generalize the Hardy--Ramanujan asymptotic formula for the partition function and have been investigated by several authors \cite{hardy1918asymptotic,bridges2024asymptotic,bridges2023number,bringmann2023asymptotic,debruyne2020saddle,romik2017number}.

	Weyl's dimension formula expresses $\zeta_\Phi(s)$ as certain Shintani-type multiple zeta functions. For example,
	\begin{align}\label{witten_zeta_explicit_ex}
		\zeta_{A_2}(s) &= 2^s \sum_{m_1,m_2\geq 1} \left(m_1 m_2 (m_1+m_2)\right)^{-s},\nonumber \\
		\zeta_{B_2}(s) &= 6^s \sum_{m_1,m_2\geq 1} \left(m_1 m_2 (m_1+m_2) (m_1+2m_2)\right)^{-s}, \\
		\zeta_{A_3}(s) &= 12^s \sum_{m_1,m_2,m_3\geq 1}  \left(m_1 m_2 m_3 (m_1+m_2) (m_2+m_3) (m_1+m_2+m_3)\right)^{-s} \nonumber.
	\end{align}
	Such expressions have been used to calculate $\zeta_\Phi(0)$ and $\zeta_\Phi'(0)$ for several root systems of small rank through \textit{ad hoc} case-by-case computations \cite{au2024single,rutard2026values,romik2017number,bailey2018computation,onodera2014functional,onodera2018generalized,onodera2021multiple}. Although the formulas in \eqref{witten_zeta_explicit_ex} are appealing, they lack intrinsic structure that enables a systematic analysis.
	
	By using the combinatorial theory of hyperplane arrangements, we recast Witten zeta functions in a more intrinsic form. This novel viewpoint yields elegant evaluations of $\zeta_\Phi(0)$ and $\zeta_\Phi'(0)$ for root systems $\Phi$ of arbitrary rank. Our first main result evaluates $\zeta_\Phi(0)$. 
	\begin{theorem}\label{thm:witten_0}
		Let $\Phi$ be a root system of rank $r$ with Weyl group $W$, and let $E(\Phi)$ be the product of the exponents of $W$. The value of the Witten zeta function at the origin is given by $$\zeta_\Phi(0) = (-1)^r \frac{E(\Phi)}{|W|}.$$
	\end{theorem}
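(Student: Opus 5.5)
We would prove Theorem~\ref{thm:witten_0} by rewriting $\zeta_\Phi(s)$ as a sum over a lattice cone and computing the value at $s=0$ as a cone valuation, then summing over the hyperplane arrangement. By Weyl's dimension formula,
\[
\zeta_\Phi(s)=\prod_{\alpha\in\Phi^+}\langle\rho,\alpha^\vee\rangle^{s}\sum_{\lambda}\prod_{\alpha\in\Phi^+}\langle\lambda,\alpha^\vee\rangle^{-s},
\]
where $\lambda$ runs over the strictly dominant weights, i.e.\ the weight lattice points in the open fundamental Weyl chamber $C$. The first step is the value at $s=0$. For a Shintani-type sum $\sum_{m\in \mathbb Z^r\cap \mathrm{int}(K)}\prod_j L_j(m)^{-s}$ over an open rational simplicial cone, with linear forms positive on $K$, the value at $s=0$ is known (Shintani-type / Euler--Maclaurin results) to be a purely combinatorial quantity independent of the forms. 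It equals the ``exponential valuation'' of the cone, namely the constant term of the lattice-point generating function $\sum_{m\in\mathrm{int}K}e^{-\langle m,t\rangle}$ in a suitable regularized sense, and it is additive under subdivision. Since the prefactor equals $1$ at $s=0$, $\zeta_\Phi(0)=\nu(C)$ for this valuation $\nu$ of the open chamber.

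Next we would use $W$-symmetry and the arrangement. The whole space $V$ decomposes as a disjoint union of the faces of the Coxeter arrangement $\mathcal A_\Phi$. Each face $F$ lies in a flat $X$ of the intersection poset $L(\mathcal A)$, and the faces in $X$ are open chambers of the restricted arrangement $\mathcal A^X$. The valuation of the full lattice $\mathbb Z^r$ (all of $V$) is $0$ for $r\ge 1$, and likewise for every lattice subspace of positive dimension; only the origin contributes $1$. This gives a system of relations. All chambers in the same $W$-orbit have the same valuation, and there are $|W|$ of them, so we get
\[
\sum_{X\in L(\mathcal A)}\ \sum_{F\text{ chamber of }\mathcal A^X}\nu(F)=\nu(V)=0 .
\]
Applying this relation to each flat and inverting with the Möbius function of $L(\mathcal A)$ gives, by Zaslavsky-type inversion,
\[
|W|\,\nu(C)=\sum_{X}\mu(V,X)\,\nu_0(X),
\]
where $\nu_0(X)$ is the valuation of the full lattice in $X$, which is $1$ for $X=\{0\}$ and $0$ otherwise. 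Hence $|W|\,\nu(C)=\mu(V,\{0\})$. (If the valuation is not exactly additive over lower-dimensional faces with these normalizations, one must carry along boundary corrections; this bookkeeping is the main obstacle.) Before using this, we would check that $\nu$ is honestly a valuation on lattice cones that vanishes on cones containing lines, e.g.\ via Brion/Lawrence--Varchenko.

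Finally, by Zaslavsky and Orlik--Solomon, $\mu(V,\{0\})=(-1)^r b_r$, where the top Betti number of the complement of the Coxeter arrangement is $\prod_i e_i=E(\Phi)$, because the characteristic polynomial is $\prod_i(t-e_i)$. This yields $\zeta_\Phi(0)=(-1)^rE(\Phi)/|W|$. As a check, for $A_1$ it gives $-1/2=\zeta(0)$.

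The hardest step is the first one: establishing that $\zeta_\Phi(0)$ equals the form-independent cone valuation, uniformly for non-simplicial products of many linear forms. We would first try to triangulate $C$ into unimodular cones, apply Shintani's formula for $\zeta(0)$ (a polynomial in Bernoulli numbers that is independent of the forms when the forms are positive on the cone), and verify compatibility on shared faces.
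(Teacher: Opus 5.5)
The combinatorial half of your plan is sound and matches the paper's. Pushing $\sum_{D}\ind_D=\sum_{X}\mu_\A(V,X)\ind_X$ (Proposition~\ref{indicator_identity}) through a valuation that vanishes on every positive-dimensional rational subspace and equals $1$ on $\{0\}$ gives $\sum_D\nu(D)=\mu_\A(V,\{0\})$. The Hilbert series $\mathrm H_S$ of half-open cones is such a valuation, with no boundary corrections needed (Lemma~\ref{hilbert_chamber_identity}). Then $\chi_\A(t)=\prod_i(t-e_i)$ gives $\mu_\A(V,\{0\})=(-1)^rE(\Phi)$.

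The gap is your first step, which is not just unproved but false. The value at $s=0$ of $\sum_{m\in\mathbb Z^r\cap\operatorname{int}K}\prod_jL_j(m)^{-s}$ depends on the forms. Over the open positive quadrant of $\mathbb Z^2$:
the forms $(x_1,x_2)$ give $\zeta(0)^2=\tfrac14$;
the forms $(x_1,x_1+x_2)$ give $\tfrac12\bigl(\zeta(0)^2-\zeta(0)\bigr)=\tfrac38$;
the forms $(x_1,x_2,x_1+x_2)$ give $\zeta_{A_2}(0)=\tfrac13$.
Shintani's formula for the value at $0$ likewise involves the coefficients of the forms. Nor is the ``constant term'' of the generating function a number: for the quadrant, the degree-zero part of $\prod_{i=1}^2(e^{t_i}-1)^{-1}$ is $\tfrac14+\tfrac1{12}(t_1/t_2+t_2/t_1)$. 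The value at $s=0$ is an average of iterated constant terms of this rational function, taken along directions dictated by the forms. So there is no universal $\nu$ with $\zeta_\Phi(0)=\nu(C)$. Triangulating does not help either, since $C$ is already unimodular for the weight lattice.

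What the paper actually proves is a form-dependent statement, and it rests on two ingredients your proposal lacks.

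First, write $Z_{\L,D}(s)=\Gamma(s)^{-N}\int_{(0,\infty)^N}\mathrm H_D(l_D)\,\y^{s-1}d\y$ with $l_D=\sum_iy_i\varepsilon_i(D)L_i$. Proposition~\ref{prop:I-values} then expresses the value at $0$ as $\frac1{N!}\sum_\sigma\operatorname{Res}_{\z=0}\pi^*\bigl(\mathrm H_D(l_D^\sigma)\,d\y/\y\bigr)$, which is linear in $\mathrm H_D$.

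Second, at $s=0$ this residue is unchanged under $y_i\mapsto-y_i$. So the chamber-dependent signs $\varepsilon_i(D)$ can be discarded, and every chamber can be evaluated at the same functional $l^\sigma=\sum_iy_{\sigma(i)}L_i$. This point is indispensable. On $wC$ with $w\neq1$ the summand is $\prod_\gamma|L_\gamma|^{-s}$, so your hypothesis ``forms positive on $K$'' fails there. Without a common functional, the chamber contributions do not come from a single valuation; indeed $\sum_D\mathrm H_D(l_D)$ is a nonconstant function.

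With both ingredients in place, $\sum_D\mathrm H_D(l^\sigma)=\mu_\A(V,\{0\})$ identically, and the residue of $\pi^*(d\y/\y)$ is $1$. Your M\"obius argument then yields $|W|\,\zeta_\Phi(0)=\xi_\L(0)=\mu_\A(V,\{0\})$ (Theorem~\ref{values_at_nonpositive_even}). The dependence on the forms cancels only in the sum over all chambers, not cone by cone.
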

	
	Our second and deeper main result evaluates $\zeta_\Phi'(0)$ in terms of codimension-one parabolic root systems. It is most naturally stated in terms of the normalized function
	$$\xi_\Phi(s) := K_\Phi^{-s} \zeta_\Phi(s)$$
	where $K_\Phi \in \mathbb{N}:=\{1,2,\ldots\}$ is an explicit integer; see equation \eqref{aux_5}. We first recall some standard notation. Let $\Delta = \{\alpha_1,\ldots,\alpha_r\}$ be a set of simple roots of $\Phi$, and let $\Phi^+$ be the set of positive roots determined by $\Delta$. With respect to an ambient inner product $(\cdot,\cdot)$, write $\gamma^\vee:= 2\gamma/(\gamma,\gamma)$, and let $\{\lambda_1,\ldots,\lambda_r\}$ be the corresponding fundamental weights. For $J\subset \Delta$, denote by $\Phi_J$ the parabolic root system induced by $J$ and by $W_J$ its Weyl group. 
	
	\begin{theorem}\label{thm:witten_0_der}
		For each positive integer $d$, we define $$m_\Phi(d) := \sum_{i=1}^r \frac{E(\Phi_{\Delta-\{\alpha_i\}})}{|W_{\Delta-\{\alpha_i\}}|} \#\{ \gamma\in \Phi^+ : (\lambda_i, \gamma^\vee) = d\}\in \mathbb{Q}.$$
		Then\footnote{Note that $m_\Phi(d) \neq 0$ for only finitely many $d$, so the next displayed sum is finite.}
		$$\xi_\Phi'(0) = (-1)^r \frac{|\Phi^+| E(\Phi)}{|W|} \log(2\pi) + \frac{(-1)^{r+1}}{2} \sum_{d\geq 1} m_\Phi(d) \log d.$$
	\end{theorem}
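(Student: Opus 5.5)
Write $\xi_\Phi(s)=\sum_{\lambda}\prod_{\gamma\in\Phi^+}(\lambda,\gamma^\vee)^{-s}$ over $\lambda=\sum m_i\lambda_i$ with $m_i\ge1$, i.e. over lattice points of the open dominant cone. This is a conical zeta function attached to the linear forms $\ell_\gamma(\m)=(\sum m_i\lambda_i,\gamma^\vee)$, and these forms cut out the Coxeter arrangement. The plan is to prove a general formula for $Z(0)$ and $Z'(0)$ of such products of linear forms, and then specialize it to the Weyl chamber.

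\textbf{Step 1: general formula via Mellin transform and Laurent coefficients.} For $Z(s)=\sum_{\m\in\mathbb N^r}\prod_{j=1}^N\ell_j(\m)^{-s}$ with positive integral forms, I would use the multivariable extension $Z(s_1,\dots,s_N)$ and restrict to the diagonal. Applying $\Gamma(s)^N Z=\int_{(0,\infty)^N}\prod t_j^{s-1}\sum_\m e^{-\sum t_j\ell_j(\m)}\,dt$ and decomposing $(0,\infty)^N$ into cones, the behaviour at $s=0$ is governed by the $t\to0$ expansion of $\prod_i(e^{u_i}-1)^{-1}$, where $u_i=\sum_j t_j\ell_j(e_i)$. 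Since $1/\Gamma(s)^N$ vanishes to order $N$, only the top polar parts survive. Concretely, $Z(0)$ and $Z'(0)$ come from the constant and logarithmic parts of homogeneous degree-$0$ integrals of the type $\int u_1^{-1}\cdots u_r^{-1}$ regularized over a simplex. The contribution to $Z(0)$ should be a ``cone valuation'' of the chamber: $(-1)^r$ times the normalized solid angle, suitably weighted. For the Weyl chamber with the Coxeter arrangement, I would express this as a sum over flats $X$ of the intersection lattice $L$ weighted by $\mu(\hat0,X)$. The key is a Zaslavsky-type inclusion–exclusion: the chamber indicator equals an alternating Möbius sum of flats and halfspaces, which removes the dependence on the non-simplicial cone decomposition.

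\textbf{Step 2: $\zeta_\Phi(0)$.} For the Coxeter arrangement, $\sum_X|\mu(\hat0,X)|$ counts chambers, which equals $|W|$. The characteristic polynomial is $\prod(t-e_i)$, so the relevant Möbius value $\mu(\hat0,\hat1)=(-1)^r\prod e_i=(-1)^rE(\Phi)$. I expect the general formula to give $Z(0)=\mu(\hat0,\hat1)/\#\text{chambers}$, using $W$-symmetry to average over chambers. This yields Theorem \ref{thm:witten_0}, and since $\zeta_\Phi(0)=\xi_\Phi(0)$ it also gives the normalization.

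\textbf{Step 3: $\xi_\Phi'(0)$.} Differentiating produces two kinds of terms. The first kind comes from $\Gamma'(1)$-type constants and $\log(2\pi)$ arising from $\zeta'(0)=-\tfrac12\log 2\pi$ in each of the $N=|\Phi^+|$ one-dimensional directions. These contribute $N\cdot Z(0)\log 2\pi$, which is the first term. The second kind consists of terms $\log d$, where $d$ is the value of a form on the generator $\lambda_i$ of an extremal ray of the chamber. This coefficient should be a local contribution of the ray $\mathbb R_{>0}\lambda_i$. That contribution is the product of $-\tfrac12$ (from the boundary, like $\zeta(0)$) and the value at $0$ of the zeta function of the transverse codimension-one face. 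That face is the chamber of the parabolic subsystem $\Phi_{\Delta-\{\alpha_i\}}$, and Step 2 gives its value $\pm E(\Phi_J)/|W_J|$. Summing over the roots $\gamma$ with $(\lambda_i,\gamma^\vee)=d$ then gives $m_\Phi(d)$.

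The main obstacle is Step 3. One must show that all the $\gamma$-dependent logarithms from the cone decomposition collapse onto the rays, with the correct face-zeta weights and signs, and that no Euler-constant terms remain; the latter cancel because $\sum_j$ of degrees matches. My first attempt would be to reduce to the case where every $\ell_j$ is a coordinate after a change of variables, then use the Möbius inversion over $L$ to localize.
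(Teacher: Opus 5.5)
\textbf{There is a genuine gap.} Your outline names the right ingredients, but the two steps that carry the proof are only asserted, and the mechanism you propose cannot work as stated.

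You set everything up for a single chamber, the Shintani-type sum over $\mathbb{N}^r$, and hope for a general formula giving its value and derivative at $0$ as a M\"obius-weighted cone valuation. For a single chamber no such formula exists. Its value at $0$ is not an invariant of the arrangement: $\sum_{m,n\ge1}(mn(m+n)^2)^{-s}$ equals $13/36$ at $s=0$, while $\sum_{m,n\ge1}(mn(m+n))^{-s}$ equals $1/3$. Moreover, the M\"obius identity $\sum_{D}\ind_D=\sum_{Y}\mu_\A(V,Y)\ind_Y$ concerns the sum of \emph{all} chambers, not an individual one. Reducing to the case where every $\ell_j$ is a coordinate is also impossible once $N>r$.

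The missing idea is to pass to the complete sum over $\mathbb{Z}^r$, which equals $|W|\xi_\Phi(s)$. There each chamber $D$ contributes $\mathrm{H}_D$ evaluated at its own functional $l_D=\sum_i y_i\varepsilon_i(D)L_i$. After splitting $(0,\infty)^N$ into the $N!$ ordered sectors and substituting $\y=\pi(\z)$, the residue giving the value at $0$ is unchanged under $y_i\mapsto -y_i$, since $\pi^*(d\y/\y)=d\z/\z$. So all the $l_D$ can be replaced by one common functional. Then $\sum_D\mathrm{H}_D=\mu_\A(V,\{0\})$ in $\mathbb{Q}(\Lambda)$, by the M\"obius identity together with the vanishing of Hilbert series of cones containing a line, and the residue becomes trivial. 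Your Step~2 conclusion $Z(0)=\mu(\hat0,\hat1)/\#\text{chambers}$ is exactly what this yields, but without the sign-flip argument it is unproved.

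For the derivative the gap is larger. You need the second Laurent coefficient, i.e.\ the finite parts of the partial residues $\mathscr{R}_j$, and these are invariant only under restricted sign changes. Nothing in your outline handles them.
\begin{itemize}
\item For $j=1$, split the chambers by the sign of $L_k$, where $k=\sigma^{-1}(1)$. The M\"obius identity restricted to the half-spaces $H_k^{\pm}$ collapses the sum onto one-dimensional flats $X\not\subset H_k$. This gives $\sum_X 2\mu_\A(V,X)(e^{z_1L_k(u_{X,k})}-1)^{-1}dz_1/z_1$, whose finite part is $\sum_X\mu_\A(V,X)\bigl(\gamma+\log L_k(u_{X,k})-\log(2\pi)\bigr)$.
\item For $j\ge2$, you must show $\mathscr{R}_j\in(\mathbb{Q}z_j^{-1}+\mathbb{Q}+\mathbb{Q}z_j)\,dz_j/z_j$; only flats of dimension at most two survive. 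The swap $y_{j-1}\leftrightarrow y_j$, i.e.\ $z_j\mapsto z_j^{-1}$, then makes these finite parts cancel in the sum over $\sigma$.
\item The Euler constants do not cancel by a degree count. They cancel against the term $N\gamma\mu_\A(V,\{0\})$ coming from $\Gamma(s)^{-N}$, via Weisner's identity $\sum_{\dim X=1,\,X\not\subset H_k}\mu_\A(V,X)=-\mu_\A(V,\{0\})$. The same identity produces the $N\mu_\A(V,\{0\})\log(2\pi)$ term.
\end{itemize}
Only after this does your intended specialization go through. That step uses the flats $\mathbb{R}w\lambda_i$, $\mu_\A(V,\mathbb{R}\lambda_i)=(-1)^{r-1}E(\Phi_{J_i})$ by localization and Terao, orbit size $|W|/|W_{J_i}|$, and a factor $\tfrac12$ for $\pm w\lambda_i$.

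Your ray-localization rule also has the wrong sign. Taking $-\tfrac12$ times $\xi_{\Phi_J}(0)=(-1)^{r-1}E(\Phi_J)/|W_J|$ gives $(-1)^{r}$ rather than $(-1)^{r+1}$. For $B_2$ it predicts $+\tfrac14\log 2$, whereas $\xi_{B_2}'(0)=\tfrac32\log(2\pi)-\tfrac14\log 2$.
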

	
	This formula immediately yields the explicit values displayed in Table \ref{table:Witten-zeta-values-at-zero} below. It also confirms a conjecture of the first author in \cite{au2024single}:
	\begin{equation}\label{bad_primes_Weyl}\xi_\Phi'(0) \in   \begin{cases}
			\mathbb{Q} \log(2\pi)  \qquad &\text{ for }\Phi = A_r, \\
			\mathbb{Q} \log(2\pi) + \mathbb{Q} \log (2)  \qquad &\text{ for }\Phi = B_r, C_r, D_r,\\
			\mathbb{Q} \log(2\pi) + \mathbb{Q} \log (2) + \mathbb{Q} \log (3) \qquad &\text{ for }\Phi = G_2, F_4, E_6, E_7,\\
			\mathbb{Q} \log(2\pi) + \mathbb{Q} \log (2) + \mathbb{Q} \log (3) + \mathbb{Q} \log (5) \qquad &\text{ for }\Phi = E_8.
	\end{cases}\end{equation}
	Later in this introduction, we give some interpretations of the primes $p$ whose logarithms occur in the expression above.
	
	\begin{table}[!t]
		\centering
		\small
		\renewcommand{\arraystretch}{1.65}
		\setlength{\tabcolsep}{7pt}
		\resizebox{0.8\textwidth}{!}{%
			\begin{tabular}{|c|c|c|}
				\hline
				$\Phi$ & $(-1)^{\operatorname{rank}\Phi} \; \xi_\Phi(0)$
				& $(-1)^{\operatorname{rank}\Phi} \; \xi_\Phi'(0)$ \\
				\hline
				$A_r$
				& $\displaystyle \frac{1}{r+1}$
				& $\displaystyle \frac{r}{2}\log(2\pi)$ \\ 
				$B_r$
				& $\displaystyle \frac{a_r}{2r}$
				& $\displaystyle \frac{ra_r}{2}\log(2\pi)-
				\left(\frac{r+2}{6}a_r - \frac{r+1}{4}\right)\log 2$ \\ 
				$C_r$
				& $\displaystyle \frac{a_r}{2r}$
				& $\displaystyle \frac{ra_r}{2}\log(2\pi)-
				\frac{r-1}{6}a_r\log 2$ \\ 
				$D_r$
				& $\displaystyle \frac{a_{r-1}}{2r}$
				& $\displaystyle \frac{(r-1)a_{r-1}}{2}\log(2\pi)-
				\left(\frac{r+1}{6}a_{r-1}-\frac{r-1}{2}\right)\log 2$ \\ 
				$G_2$
				& $\frac{5}{12}$
				& $\frac{5}{2}\log(2\pi)-\frac{1}{2}\log 2-\frac{1}{2}\log 3$ \\ 
				$F_4$
				& $\frac{385}{1152}$
				& $\frac{385}{48}\log(2\pi)-3\log 2-\frac{1}{3}\log 3$ \\ 
				$E_6$
				& $\frac{77}{324}$
				& $\frac{77}{9}\log(2\pi)-\frac{5}{6}\log 2-\frac{1}{18}\log 3$ \\ 
				$E_7$
				& $\frac{2431}{9216}$
				& $\frac{17017}{1024}\log(2\pi)-
				\frac{2841}{1024}\log 2-\frac{5}{12}\log 3$ \\ 
				$E_8$
				& $\frac{30808063}{99532800}$
				& $\frac{30808063}{829440}\log(2\pi)-\frac{188659}{18432}\log 2
				-\frac{1697}{648}\log 3-\frac{1}{5}\log 5$ \\ 
				\hline
			\end{tabular}
		}
		\caption{\small Values and derivatives at the origin of the normalized Witten zeta functions for all irreducible root systems, where $\displaystyle a_k:=\frac{k}{2^{2k-1}}\binom{2k}{k}$.}
		\label{table:Witten-zeta-values-at-zero}
	\end{table}

	The two theorems above arise from a more general class of functions, which we call \textit{complete conical zeta functions}. Let $\L:= (L_1,\ldots,L_N)$ be a multiset\footnote{i.e., $L_1,\ldots,L_N$ need not be pairwise distinct.} of nonzero rational linear forms on $\mathbb{R}^r$. When $L_1,\ldots,L_N$ span the dual space, we define the complete conical zeta function by
	$$\xi_\L(s) := \sum_{\boldsymbol{x} \in \mathbb{Z}^r}^{} {'}\frac{1}{|L_1(\boldsymbol{x})\cdots L_N(\boldsymbol{x})|^s},\qquad \Re(s)\gg 0,$$
	where the prime indicates that terms for which at least one $L_i(\boldsymbol{x})$ vanishes are omitted. It was briefly considered by Zagier \cite{zagier1994values}, who proved that $\xi_\L(2n)\in\mathbb{Q}\pi^{2Nn}$ for $n\in\mathbb{N}$. Such special values were later studied systematically in the framework of hyperplane arrangements \cite{szenes1998iterated,brion2000arrangement,Komori2014lattice}.
	
	Let $\A:=\{\ker L_i:1\leq i\leq N\}$ be the underlying hyperplane arrangement of $\L$. The Witten zeta function $\xi_\Phi(s)$ is recovered when $\A$ is the Weyl arrangement of $\Phi$. We will prove that $\xi_\L(s)$ admits a meromorphic continuation to $\mathbb{C}$ and is analytic at every nonpositive integer.
	
	Let $L(\A)$ denote the intersection poset (under reverse inclusion) of $\A$, and let $\mu_\A(X,Y)$ be its M\"obius function; these notions are reviewed in Section 2.1.
	
	\begin{theorem}\label{thm:conical_0}
		The value of $\xi_\L(s)$ at the origin is
		$$\xi_\L(0) = \mu_\A(\mathbb{R}^r,\{0\}),$$
		i.e., the value of the M\"obius function between the minimal and maximal elements of the intersection poset.
	\end{theorem}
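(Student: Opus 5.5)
We decompose the sum over $\mathbb{Z}^r$ according to the flats of $\A$ and use Möbius inversion to isolate the terms that survive in $\xi_\L$. For each flat $X\in L(\A)$ (a subspace cut out by some of the hyperplanes) set
$$Z_X(s) := \sum_{\boldsymbol{x}\in X\cap\mathbb{Z}^r} f_s(\boldsymbol{x}),$$
where $f_s$ is a suitable homogeneous weight defined on all of $\mathbb{R}^r$. For each $Y\in L(\A)$ let $Y^\circ := Y\setminus\bigcup_{H\in\A,\,H\not\supset Y} H$. Then $X=\bigsqcup_{Y\le X} Y^\circ$, with $Y\le X$ meaning $Y\subseteq X$ in the reverse-inclusion order. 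Möbius inversion on $L(\A)$ gives
$$\sum_{\boldsymbol{x}\in(\mathbb{R}^r)^\circ\cap\mathbb{Z}^r} f_s(\boldsymbol{x}) = \sum_{Y\in L(\A)} \mu_\A(\mathbb{R}^r,Y)\, Z_Y(s).$$
The left side is exactly the primed sum defining $\xi_\L$, provided $f_s$ agrees with $|L_1\cdots L_N|^{-s}$ on the complement of $\bigcup\A$.

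The difficulty is that $|L_1\cdots L_N|^{-s}$ vanishes or blows up on the hyperplanes, so it cannot be summed over a flat directly. I would therefore regularize and work at a value of $s$ where everything is meaningful. One option is to multiply by a damping factor such as $e^{-t Q(\boldsymbol{x})}$ with $Q$ a positive definite form, or to add an auxiliary Epstein-type factor $Q(\boldsymbol{x})^{-w}$. The plan is to show two things about each $Z_Y$ after analytic continuation:
\begin{enumerate}
\item[(a)] For $Y\neq\{0\}$ the lattice sum over $Y\cap\mathbb{Z}^r$ is a homogeneous lattice sum whose value at $s=0$ behaves like an Epstein zeta at $0$, namely the constant $-1$ coming from the omitted origin, corrected by the flat's own structure. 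More cleanly, I would treat each $Y$ by induction on $\dim Y$, using the same identity restricted to the arrangement induced on $Y$.
\item[(b)] The term $Y=\{0\}$ contributes $1$, since it is the single point $\boldsymbol{x}=0$ with $f_0(0)=1$.
\end{enumerate}

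The cleanest route is to show that the full-space sum $\sum_{\boldsymbol{x}\in Y\cap\mathbb{Z}^r} F(\boldsymbol{x})^{-s}$, with a homogeneous positive $F$ of positive degree, continues to $s=0$ with value $0$ whenever $\dim Y\ge 1$, counting the origin with value $1$. This matches the standard fact that an Epstein zeta function satisfies $Z(0)=-1$ when the origin is excluded; including the origin adds $1$, giving $0$. If this holds, the inversion formula collapses at $s=0$ to $\mu_\A(\mathbb{R}^r,\{0\})\cdot 1$, which is the claim.

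The main obstacle is making (a) rigorous for the actual weights. The regularized $f_s$ must coincide with $|L_1\cdots L_N|^{-s}$ off the arrangement, yet be a nice homogeneous function on each flat, and continuing and evaluating these nonsmooth multi-form sums at $s=0$ is the real work. My first attempt would be the multivariable family $\prod_i (L_i(\boldsymbol{x})^2+\varepsilon Q(\boldsymbol{x}))^{-s/2}$, or a Mellin/heat-kernel representation $\Gamma(s)^{-1}\int_0^\infty t^{s-1}\Theta_Y(t)\,dt$ with the total degree of homogeneity tracked. The value at $s=0$ is then governed only by the constant term of the small-$t$ asymptotics, via the $\Gamma(s)^{-1}$ zero. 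I would use Euler–Maclaurin or Poisson summation on the lattice $Y\cap\mathbb{Z}^r$ to show that this constant term is exactly the point mass at the origin minus the omitted contribution, that is, $0$ overall. The delicate part is that the weight is singular along the lower flats inside $Y$. This is precisely why the inversion is organized over flats, so that each singular locus is accounted for by its own term.
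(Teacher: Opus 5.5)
\paragraph{Gap.} Your combinatorial skeleton is the right one and matches the paper's: the indicator identity $\sum_D\ind_D=\sum_Y\mu_\A(V,Y)\ind_Y$ (Proposition~\ref{indicator_identity}), the vanishing of every positive-dimensional flat, and the origin contributing $1$. What is missing is an analytic device that gives the flat terms a meaning, and neither of your candidate regularizations provides one.

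For a proper flat $Y$, some $L_i$ vanishes identically on $Y$, so $|L_1\cdots L_N|^{-s}$ is $0$ or $\infty$ there. Suppose you instead assign positive values on the arrangement. Then $Z_V(s)$ contains $\xi_\L(s)$ itself, so asserting $Z_V(0)=0$ is no easier than the theorem. The Epstein-type fact cannot rescue this: it needs a weight that is positive and regular on $Y\setminus\{0\}$. No such weight agrees with $|L_1\cdots L_N|$ on $\mathbb{Z}^r\setminus\bigcup\A$; for $x_1x_2$, along $(1,n)$ it grows like $n$, not $n^2$.

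If you take $F_\varepsilon=\prod_i(L_i^2+\varepsilon Q)^{1/2}$, which is smooth, positive and homogeneous on every flat, your inversion does go through. It shows that $\Phi_\varepsilon(s):=\sum_{x\in\mathbb{Z}^r\setminus\bigcup\A}F_\varepsilon(x)^{-s}$ satisfies $\Phi_\varepsilon(0)=\mu_\A(V,\{0\})$ for every $\varepsilon>0$. But $\Phi_\varepsilon\neq\xi_\L$, and exchanging $\varepsilon\to0$ with the continuation to $s=0$ is a genuine obstruction. Take $x_1x_2$ on $\mathbb{Z}^2$ with $Q=x_1^2+x_2^2$. The axis terms equal $2(\varepsilon(1+\varepsilon))^{-s/2}\zeta(2s)$ each, so $\Phi_\varepsilon$ has a pole at $s=1/2$ with residue $-2(\varepsilon(1+\varepsilon))^{-1/4}$. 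Meanwhile $\Phi_\varepsilon\to4\zeta(s)^2$ on $\Re s>1$. The pole structure degenerates in the limit, so no soft (normal-families) argument carries the limit down to $s=0$.

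\paragraph{The missing idea.} Use one Mellin variable per form. On each chamber $D$, $\sum_i y_i|L_i(x)|$ is a linear functional $l_D(x)$, so $G_{\L,D}=\mathrm H_D(l_D)$ is the evaluation of a rational function in $\mathbb{Q}(\Lambda)$. Proposition~\ref{prop:I-values} expresses $\xi_\L(0)$ as $\frac{1}{N!}\sum_\sigma$ of iterated residues at $\z=0$ of $\pi^*(G_\L^\sigma\,d\y/\y)$. These residues are unchanged under $y_i\mapsto -y_i$, so every chamber-dependent $l_D$ can be replaced by the single global functional $l=\sum_i y_iL_i$. This step produces one object that is coherent across all chambers and flats.

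The M\"obius inversion is then carried out in $\mathbb{Q}(\Lambda)$: $\sum_D\mathrm H_D=\sum_Y\mu_\A(V,Y)\mathrm H_Y=\mu_\A(V,\{0\})$. Here $\mathrm H_Y=0$ for $\dim Y>0$ by translation invariance (Lemma~\ref{pointed_vanishing_lemma}), even though $\sum_{x\in Y\cap\mathbb{Z}^r}e^{-l(x)}$ diverges. This algebraic vanishing is the rigorous form of your claim that ``the full lattice sum over a flat is $0$''. The residue of $\mu_\A(V,\{0\})\,d\z/\z$ then yields the theorem with no limiting process at all.
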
 
	
	This result (Theorem \ref{values_at_nonpositive_even}) relates an analytically defined quantity to a purely combinatorial invariant of the arrangement $\A$. Since Weyl arrangements are free, the above result and Terao's factorization theorem \cite{terao1981generalized} immediately yield Theorem \ref{thm:witten_0}. 
	
	The derivative contains additional geometric information: unlike Theorem \ref{thm:conical_0}, which involves only the M\"obius invariant of the zero-dimensional flat $\{0\}$, its formula also receives contributions from one-dimensional flats.
	\begin{theorem}\label{thm:conical_0_der}
		For each one-dimensional flat $X\in L(\A)$, choose a generator $u_X \in \mathbb{Q}^r$, defined up to sign, such that $\mathbb{Z} u_X = X\cap\mathbb Z^r$, and put
		$$m_X:=\prod_{\substack{1\leq k\leq N\\ L_k(u_X)\neq 0}}|L_k(u_X)|\in\mathbb Q_{>0}.$$
		Then
		$$\xi_\L'(0)=N\mu_\A(\mathbb{R}^r,\{0\})\log(2\pi)+\sum_{\substack{X\in L(\A)\\\dim X=1}}\mu_\A(\mathbb{R}^r,X)\log m_X.$$
	\end{theorem}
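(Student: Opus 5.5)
The strategy is to stratify the lattice $\mathbb{Z}^r$ by the flats of $\A$, apply Möbius inversion, and reduce everything to sums over the complements of arrangements. For a flat $X\in L(\A)$, set $\Lambda_X := X\cap\mathbb{Z}^r$ and define
$$F_X(s):=\sum_{\boldsymbol{x}\in\Lambda_X}{'}\prod_{k:\,L_k|_X\neq 0}|L_k(\boldsymbol{x})|^{-s},$$
where the prime means we omit points lying in a proper subflat of $X$. Here $F_{\mathbb{R}^r}=\xi_\L$.

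The key auxiliary object is the full sum $G_X(s)$ over \emph{all} $\boldsymbol{x}\in\Lambda_X$, including those on subflats. The factors that vanish there have to be regularized. The natural device is to multiply by a weight such as $|L_k(\boldsymbol{x})|^{-s}$ with the convention $0^{-s}:=1$; this convention is chosen precisely so that the points on a subflat $Y\subset X$ contribute $F_Y$ up to a multiplicative factor. Splitting $\Lambda_X$ according to the smallest flat containing each point gives $G_X=\sum_{Y\le X}c_{X,Y}(s)F_Y$, where each $c_{X,Y}(s)$ is identically $1$. Möbius inversion then yields
$$\xi_\L(s)=\sum_{Y}\mu_\A(\mathbb{R}^r,Y)\,G_Y(s).$$

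The proof is then a matter of computing $G_Y(0)$ and $G_Y'(0)$ for each flat $Y$:
\begin{itemize}
\item \emph{$\dim Y\ge 2$.} Here $G_Y(s)$ is a sum over a full lattice of a product of powers of linear forms, a Shintani/Epstein-type zeta function. I expect to show via a Mellin transform or a Zagier-type decomposition into cones that $G_Y(0)=0$ and $G_Y'(0)=0$, for instance by exhibiting a factor $s^2$ arising from averaging over lattice directions.
\item \emph{$\dim Y=1$.} Here $\Lambda_Y=\mathbb{Z}u_Y$, so
$$G_Y(s)=1+2m_Y^{-s}\zeta(Ns).$$
The constant $1$ comes from the origin under the convention $0^{-s}=1$; indeed the origin contributes to every $G_Y$ and accounts for the $\{0\}$ term. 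Using $\zeta(0)=-\tfrac12$ and $\zeta'(0)=-\tfrac12\log 2\pi$, we get $G_Y(0)=0$ and $G_Y'(0)=\log m_Y - N\log 2\pi$.
\item \emph{$Y=\{0\}$.} Here $G_{\{0\}}=1$.
\end{itemize}

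Summing up, $\xi_\L(0)=\mu_\A(\mathbb{R}^r,\{0\})$, recovering Theorem~\ref{thm:conical_0}. For the derivative,
$$\xi_\L'(0)=\sum_{\dim X=1}\mu_\A(\mathbb{R}^r,X)\bigl(\log m_X-N\log 2\pi\bigr).$$
To put this in the stated form, I would use the identity
$$\sum_{\dim X=1}\mu_\A(\mathbb{R}^r,X)=-\mu_\A(\mathbb{R}^r,\{0\}),$$
which follows from $\sum_{Y\le\{0\}}\mu_\A(\mathbb{R}^r,Y)=0$ together with $G_Y'(0)=0$ for $\dim Y\ge2$. With it, the $\log 2\pi$ terms collapse to $N\mu_\A(\mathbb{R}^r,\{0\})\log(2\pi)$. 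Strictly speaking, this identity requires the alternating sum over flats of dimension $\ge2$ to be handled as well, which is another reason the higher-dimensional vanishing must be proved carefully.

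The main obstacle is the claim that $G_Y$ vanishes to second order at $s=0$ when $\dim Y\ge2$, since this needs the meromorphic continuation. My first attempt would be to write $G_Y$ as $|\mathrm{sgn}|$-weighted sums over the orthants $\pm$ of a Shintani cone decomposition. Each orthant contributes a Shintani zeta function whose value at $0$ is a Bernoulli-polynomial expression, and symmetry under $\boldsymbol{x}\mapsto-\boldsymbol{x}$ together with summation over all cones should make both the value and the derivative cancel. If this fails, I would fall back on decomposing by the last coordinate and inducting on dimension, using $\zeta(0)$-type cancellations.
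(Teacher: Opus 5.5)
\textbf{There is a genuine gap.} Your M\"obius stratification is correct as bookkeeping: one does have $\xi_\L=\sum_{Y}\mu_\A(V,Y)\,G_Y$ for $\Re(s)\gg0$. But it does not reduce the problem. All the content has moved into the claim that $G_Y(0)=G_Y'(0)=0$ for $\dim Y\ge2$, and this claim is not proved. In fact it restates the theorem.

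Apply your own stratification inside $Y$. This gives $G_Y=\sum_{Z\subseteq Y}F_Z$, where each $F_Z$ is the complete conical zeta function of the restricted family $\{L_k|_Z\neq0\}$ on the lattice $Z\cap\mathbb Z^r$. So for $r\ge2$, if the formula is known for all proper restrictions, then
\[
G_V'(0)=\xi_\L'(0)-\Bigl(N\mu_\A(V,\{0\})\log(2\pi)+\sum_{\dim X=1}\mu_\A(V,X)\log m_X\Bigr).
\]
That is, $G_V'(0)=0$ is \emph{equivalent} to what you want to prove. It is true a posteriori, but only as a consequence of the theorem.

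The mechanisms you suggest do not supply the missing input:
\begin{itemize}
\item The symmetry $\boldsymbol{x}\mapsto-\boldsymbol{x}$ maps a chamber $D$ to $-D$ with $Z_{\L,D}=Z_{\L,-D}$. It doubles contributions rather than cancelling them.
\item Values at $0$ of cone zeta functions are rational Bernoulli-type numbers, but their derivatives at $0$ involve transcendental finite-part constants (log-Gamma or Barnes type). No such symmetry removes these.
\item Fibering along a lattice direction makes each line sum vanish at $s=0$. A second factor of $s$, however, needs control of the continuation uniformly over infinitely many fibres, which is exactly the analytic problem.
\end{itemize}

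The paper supplies this input differently. It uses the Mellin representation with the whole chamber sum $G_\L(\y)=\sum_D\mathrm H_D(l_D)$. It expresses $\xi_\L'(0)$ through Hadamard finite parts of partial residues (Proposition~\ref{prop:I-values}). It then applies the cone-valuation identity restricted to the half-spaces $H_k^\pm$ and to the regions $\{sL_aL_b>0\}$. Because Hilbert series of translation-invariant cones vanish, two things follow:
\begin{itemize}
\item The $z_1$-residue sees only one-dimensional flats (Proposition~\ref{one_sided_residue}). Each $X\not\subset H_k$ contributes $\mu_\A(V,X)(\gamma+\log|L_k(u_X)|-\log 2\pi)$ by Lemma~\ref{hankel_bose_integral}.
\item The $z_j$-residues for $j\ge2$ are Laurent polynomials $c_{-1}z^{-1}+c_0+c_1z$ (Proposition~\ref{laurent_partial_residue}). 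Their finite parts change sign under $y_{j-1}\leftrightarrow y_j$, so they cancel after summing over $\mathfrak S_N$.
\end{itemize}
Nothing in your plan plays this role.

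The part you did compute also contains two errors that happen to cancel.
\begin{itemize}
\item On a one-dimensional flat only the $N_Y:=\#\{k:L_k(u_Y)\neq0\}$ forms are nonzero. So $G_Y(s)=1+2m_Y^{-s}\zeta(N_Ys)$ and $G_Y'(0)=\log m_Y-N_Y\log(2\pi)$, not $\zeta(Ns)$ and $N\log(2\pi)$.
\item The identity $\sum_{\dim X=1}\mu_\A(V,X)=-\mu_\A(V,\{0\})$ is false. For the two coordinate lines in $\mathbb R^2$ the left side is $-2$ and the right side is $-1$. The relation $\sum_{Y\in L(\A)}\mu_\A(V,Y)=0$ also contains $\mu_\A(V,V)=1$ and the M\"obius values of all flats of dimension $\ge2$. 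These are combinatorial numbers, unaffected by any vanishing of $G_Y'(0)$.
\end{itemize}
The correct step is Weisner's theorem applied to each hyperplane separately (Lemma~\ref{Weisner_lemma}):
\[
\sum_{\dim X=1}N_X\,\mu_\A(V,X)=\sum_{k=1}^N\sum_{\dim X=1,\,X\not\subset H_k}\mu_\A(V,X)=-N\mu_\A(V,\{0\}).
\]
This is precisely what turns your one-dimensional terms into $N\mu_\A(V,\{0\})\log(2\pi)$.
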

	
	Theorem \ref{thm:witten_0_der} for Witten zeta functions follows readily from this result. Apart from the combinatorial quantities $\mu_\A(\mathbb{R}^r,X)$, the formula also contains arithmetic data $\log m_X$ attached to one-dimensional flats. The arithmetic nature of these latter terms is clarified by the following observation.
	
	\begin{corollary} \label{cor:intro-bad-primes}
		Assume the linear forms $L_1,\ldots,L_N$ have integral coefficients. Let $A$ be the $N\times r$ integer matrix whose rows are their coefficient vectors. We say that a prime $p$ is bad if $p$ divides one of its nonzero $r\times r$ minors. Then
		$$\xi_\L'(0) \in \mathbb{Z} \log(2\pi) + \bigoplus_{p \text{ bad}} \mathbb{Z}\log p.$$
	\end{corollary}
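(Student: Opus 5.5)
The plan is to derive the corollary directly from Theorem \ref{thm:conical_0_der}. First, I check that every coefficient appearing there is an integer. The Möbius values $\mu_\A(\mathbb{R}^r,X)$ and $\mu_\A(\mathbb{R}^r,\{0\})$ are integers by definition of the Möbius function on a finite poset, and $N$ is an integer. Hence
$$\xi_\L'(0) \in \mathbb{Z}\log(2\pi) + \sum_{\dim X = 1} \mathbb{Z} \log m_X .$$
It then remains to show that each rational number $m_X$ is a product of (positive or negative) powers of bad primes. Then $\log m_X \in \bigoplus_{p \text{ bad}} \mathbb{Z}\log p$, and the directness of the sum is automatic because logarithms of distinct primes are $\mathbb{Q}$-linearly independent.

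To control $m_X$, fix a one-dimensional flat $X$. By definition $X$ is an intersection of hyperplanes $\ker L_k$, so the set $S=\{k : L_k(u_X)=0\}$ spans an $(r-1)$-dimensional subspace of the dual. Choose $r-1$ linearly independent rows $a_{k_1},\dots,a_{k_{r-1}}$ of $A$ with indices in $S$. By Cramer's rule the line $X$ is spanned by the integer vector $v$ whose $j$-th coordinate is $(-1)^j$ times the $(r-1)\times(r-1)$ minor of the $(r-1)\times r$ matrix $B$ formed by these rows with column $j$ deleted. For any $k\notin S$ we get $L_k(v) = \det\begin{pmatrix} a_k \\ B\end{pmatrix}$, which is a nonzero $r\times r$ minor of $A$ (up to sign and row order). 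Its prime factors are therefore bad.

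Next I compare $v$ with the primitive generator $u_X$. We have $v = c\,u_X$, where $c=\pm\gcd$ of the entries of $v$ (here $u_X\in\mathbb{Z}^r$ is primitive, since $X$ is rational). So
$$|L_k(u_X)| = |L_k(v)|/|c|,$$
and it suffices to show that every prime $p$ dividing $c$ is bad. Since $L_k(u_X)\in\mathbb{Z}$, $c$ divides each $L_k(v)$ with $k\notin S$. At least one such $k$ exists because the forms span the dual space and $u_X\neq 0$. Hence $c$ divides a nonzero $r\times r$ minor, and its primes are bad. Consequently $m_X=\prod_{k\notin S}|L_k(v)|/|c|^{\#\{k\notin S\}}$ is a ratio of products of bad primes, so $\log m_X\in\bigoplus_{p\text{ bad}}\mathbb{Z}\log p$.

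The main obstacle is this bookkeeping around the primitive generator: making sure that the gcd $c$ introduces no new primes. The divisibility argument above handles it, using only that $L_k(u_X)$ is integral for integer forms. Everything else is immediate from Theorem \ref{thm:conical_0_der}.
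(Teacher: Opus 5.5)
Your proof is correct. It rests on the same underlying fact as the paper's, namely that $L_k(u_X)$ divides the nonzero minor $\det(L_{i_1},\dots,L_{i_{r-1}},L_k)$, but you establish it by a different, global technique.

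The paper argues one prime at a time. For a good prime $p$, these $r$ forms have determinant a $p$-adic unit, so they form a $\mathbb{Z}_p$-basis of the dual of $\mathbb{Z}^r\otimes\mathbb{Z}_p$. Hence there is $v\in\mathbb{Z}^r\otimes\mathbb{Z}_p$ with $L_{i_j}(v)=0$ and $L_k(v)=1$. Writing $v=a\,u_X$ with $a\in\mathbb{Z}_p$ then shows that $L_k(u_X)$ is a $p$-adic unit.

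You instead write down the cofactor vector $v\in\mathbb{Z}^r\cap X$. By Laplace expansion, $L_k(v)$ is literally that $r\times r$ minor, and $v=c\,u_X$ gives $c\,L_k(u_X)=L_k(v)$ in $\mathbb{Z}$. Your version is more elementary and makes the divisibility explicit. In particular it shows that $m_X$ is a positive integer built from bad primes. The paper's version avoids the cofactor and sign bookkeeping and never needs an explicit vector on $X$.

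You can also drop the detour through $c$. Since $c$ and $L_k(u_X)$ are both integers with product $L_k(v)$, the integer $L_k(u_X)$ already divides a nonzero minor, so all of its prime factors are bad. There is no need to check separately that the primes of $c$ are bad.
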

	
	For Weyl arrangements, the bad primes in this sense are precisely the primes appearing in \eqref{bad_primes_Weyl}. They coincide with the primes traditionally called bad for the root system \cite[Chapter~1.4]{Springer1970}. Equivalently, they are the prime divisors of the coefficients of the highest root when it is expressed in terms of the simple roots; see \cite{au2024vanishing}.
	
	The proofs of Theorems \ref{thm:conical_0} and \ref{thm:conical_0_der} have two complementary components, one analytic and one combinatorial. The analytic component begins with the Mellin-transform representation
	$$\xi_\L(s) = \frac{1}{\Gamma(s)^N} \int_{(0,\infty)^N} G_\L(\y) \y^{s-1} d\y,\quad \y^{s-1} := \prod_{i=1}^N y_i^{s-1}.$$
	Unlike the integrands associated with the Shintani-type zeta functions, $G_\L(\y)$ is generally a complicated exponential rational function, and no convenient expression is available for arbitrary $\L$. We therefore develop a framework for a general function $G(\y)$ satisfying the local regularity and Schwartz-type decay hypotheses formulated in Section~3. For the function
	$$I_G(s):=\dfrac{1}{\Gamma(s)^N}
	\int_{(0,\infty)^N}G(\y)\y^{s-1}\,d\y$$
	our main analytic result, Proposition \ref{prop:I-values}, gives formulas for $I_G(-n)$ and $I_G'(-n)$, where $n\in\mathbb{Z}_{\geq0}$, in terms of residues and Hadamard finite-part integrals. For the relation between Mellin transforms and Hadamard finite-part integrals, see \cite{monegato1998euler}.
	
	These formulas also apply to Shintani-type zeta functions, whose analytic continuation and special values at nonpositive integers have been studied extensively \cite{shintani1976evaluation,bruna2025polynomials,rutard2026values,eie1993special}. As a consequence of our formula for $I_G(-n)$, we obtain the following novel parity-vanishing criterion (Theorem \ref{thm:parity-vanishing}) for a general class of such functions.
	\begin{theorem} \label{thm:intro-parity-vanishing}
		Let $N\geq r$, and let $L_1,\ldots,L_{N}$ be nonzero rational linear forms with nonnegative coefficients such that $L_i(\boldsymbol{x})=x_i$ for $1\le i\le r$. Consider the function
		$$Z(s):=\sum_{n_1,\ldots,n_r\in\mathbb{N}}\frac{1}{\left(L_{1}(\boldsymbol{n})\cdots L_{N}(\boldsymbol{n})\right)^s}=\sum_{n_1,\ldots,n_r\in\mathbb{N}}\frac{1}{\left(n_1\cdots n_rL_{r+1}(\boldsymbol{n})\cdots L_{N}(\boldsymbol{n})\right)^s},\qquad \Re(s)\gg0.$$
		If $n\in\mathbb{N}$ and $Nn+r\equiv1\pmod{2}$, then $Z(-n)=0$.
	\end{theorem}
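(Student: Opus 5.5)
We prove Theorem~\ref{thm:intro-parity-vanishing} by writing $Z$ in the Mellin form used for $I_G$, evaluating $Z(-n)$ via the residue formula of Proposition~\ref{prop:I-values}, and showing that this residue is odd under a sign flip of all variables. We assume the proposition gives $I_G(-n)$ for $n\in\mathbb{Z}_{\ge0}$, under the regularity and decay hypotheses of Section~3, as a finite linear combination of Laurent coefficients of $G$ near the origin.

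\emph{Integral representation.} Since all coefficients are nonnegative and $L_i(\boldsymbol n)>0$ for $\boldsymbol n\in\mathbb{N}^r$, we may write $\Gamma(s)^{-1}L^{-s}=\Gamma(s)^{-1}\int_0^\infty y^{s-1}e^{-yL}\,dy$ for each factor. This gives
$$Z(s)=\frac{1}{\Gamma(s)^N}\int_{(0,\infty)^N} G(\y)\,\y^{s-1}\,d\y,\qquad G(\y)=\sum_{\boldsymbol n\in\mathbb{N}^r}\exp\Bigl(-\sum_{i=1}^N y_iL_i(\boldsymbol n)\Bigr).$$
Set $w_j(\y)=\sum_i y_i\,c_{ij}$, where $L_i(\boldsymbol x)=\sum_j c_{ij}x_j$. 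Then $G(\y)=\prod_{j=1}^r \bigl(e^{w_j}-1\bigr)^{-1}$, and $w_j\ge y_j>0$ because $L_j=x_j$. So $G$ is a product of Todd-type factors in the linear forms $w_j$. It has the local regularity required in Section~3 (a rational function in $\y$ times analytic functions, with poles only along $w_j=0$) and Schwartz decay at infinity. The step I expect to need the most care is checking these hypotheses uniformly on sectors near coordinate subspaces, where some $y_i$ tend to $0$ while others do not.

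\emph{Homogeneous expansion.} Write $1/(e^{w}-1)=w^{-1}f(w)$ with $f(w)=w/(e^w-1)$, so that $f(w)-1+w/2$ is even. Expanding gives $G=\sum_k G_k$, where $G_k$ is homogeneous in $\y$ of degree $k-r$ and satisfies $G_k(-\y)=(-1)^{k-r}G_k(\y)$. I expect Proposition~\ref{prop:I-values} to express $I_G(-n)$ (up to the factor $(-1)^{Nn}(n!)^N$ coming from $\lim_{s\to-n}\Gamma(s)^{-N}$ against the poles) through the coefficient of the monomial $\y^{n}=\prod_i y_i^{n}$. This coefficient should be extracted as a residue or finite part from the homogeneous piece of total degree $Nn$, namely $G_k$ with $k-r=Nn$. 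Concretely, a rescaling $\y=t\boldsymbol\theta$ with $t\in(0,\infty)$ and $\boldsymbol\theta$ on the simplex separates the radial integral. The radial part picks out degree $-Nn$ for $\y^{s-1}$, equivalently the piece $G_{Nn+r}$. The remaining factor is a (finite-part) integral of $G_{Nn+r}(\boldsymbol\theta)\boldsymbol\theta^{-n-1}$ over the simplex, combined with residues along faces. The main obstacle is making this precise: I need the proposition's output for $I_G(-n)$ to depend only on $G_{Nn+r}$, with all face contributions for $n\ge1$ vanishing because $1/\Gamma(s)^N$ has zeros of order $N$. Here $n\ge1$ is exactly what kills the lower-dimensional boundary terms.

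\emph{Parity.} It remains to show that the relevant functional of $G_{Nn+r}$ vanishes when $Nn+r$ is odd. The $G_k$ are obtained from $\prod_j w_j^{-1}f(w_j)$, and that product is a function of $(w_1,\dots,w_r)$ whose homogeneous parts of degree $m$ have parity $(-1)^m$. The cleanest route is to use that $\y\mapsto$ coefficient extraction (residue at the origin/Laurent coefficient) of $\y^n$ in $G$ equals the coefficient of total degree $Nn$. Set $Q(\y)=\prod_j w_j\cdot G(\y)$, an analytic germ. Then the desired coefficient is the $\y^{n}$ coefficient of a homogeneous rational function of degree $Nn$, and this coefficient is the degree-$(Nn+r)$ part of $Q$ divided by $\prod_j w_j$. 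If the proposition's recipe for $Z(-n)$ reduces to an antisymmetrized or odd-degree Laurent coefficient, the sign flip $\y\to-\y$ shows it equals its own negative, hence is $0$. The delicate point is the odd Bernoulli term $-w/2$ in $f$. This is the term that breaks naive parity, so I would check that the contributions of the $w/2$ terms cancel against the exclusion structure. Since $\sum_{n\ge1}e^{-nw}=\tfrac12(\coth(w/2)-1)$, the even part is $\tfrac12\coth(w/2)$ and the constant $-\tfrac12$ contributes products over proper subsets of $\{w_j\}$. Those products are lower-dimensional cone terms, whose residue contribution vanishes for $n\ge1$ by the same order-$N$ zero. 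With only the $\coth$ parts remaining, $G$ is odd or even exactly according to $r$, and the degree-$(Nn+r)$ piece is zero when $Nn+r$ is odd. This gives $Z(-n)=0$.
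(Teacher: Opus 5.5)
Your skeleton matches the paper's argument. You split each factor as $\frac{1}{e^{w}-1}=\frac12\coth\frac w2-\frac12$, separating the $k=0$ and even-$k$ Bernoulli terms from the $B_1$ term. You note that only the homogeneous part of degree $Nn$ in $\y$ survives the residue in \eqref{eq:I-value}. This is immediate: a degree-$d$ piece pulls back under $\pi$ to $z_1^{d}$ times a function of $z_2,\dots,z_N$, while $\pi^*(\y^{-n-1}d\y)$ carries $z_1^{-Nn-1}$, so the simplex and face analysis you worry about is unnecessary. You also observe that the all-$\coth$ product has parity $(-1)^r$, so it has no piece of degree $Nn$ when $Nn+r$ is odd. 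Two small corrections: $\frac12\coth\frac w2$ is the \emph{odd} part, not the even part. And a global sign flip proves nothing by itself, because the residue is invariant under $\y\mapsto-\y$; it only kills summands of definite parity.

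The genuine gap is the step that discards every term containing a factor $-\frac12$. Your reason (``lower-dimensional cone terms, whose residue contribution vanishes for $n\ge1$ by the same order-$N$ zero'') never uses the hypothesis $L_j(\boldsymbol x)=x_j$ for $1\le j\le r$; you invoke it only to get $w_j\ge y_j$. Without that hypothesis the claim is false. Take $r=2$, $N=3$, $\L=(x_1,\,x_1+x_2,\,x_1+x_2)$ and $n=1$, so $Nn+r=5$ is odd. Then $w_1=y_1+y_2+y_3$ and $w_2=y_2+y_3$. The term $-\frac12\cdot\frac12\coth\frac{w_1}{2}$ has the constant in the second slot, i.e.\ it is the face $n_2=0$. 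Its degree-$3$ part is $w_1^3/1440$, whose residue against $\y^{-2}d\y$ is $6/1440$ for every $\sigma$. The remaining terms vanish, by parity or by independence of $y_1$, so $Z(-1)=-\frac1{240}=-\frac12\zeta(-3)\neq0$. That lower-dimensional face carries the honest zeta function $-\frac12\zeta(3s)$, with full pole order.

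The missing idea is exactly where the hypothesis is used. Because $L_j=x_j$ for $j\le r$, the variable $y_j$ occurs in $w_j$ and in no other $w_{j'}$. Hence a term whose $j_0$-th factor is the constant $-\frac12$ is independent of $y_{j_0}$ (after applying $\sigma$, of the correspondingly permuted variable). You then need that the $\z$-residue of such a term times $\pi^*(\y^{-n-1}d\y)$ vanishes. This residue is the integral over the image under $\pi$ of a small $\z$-torus, namely the $\y$-torus $\{|y_m|=\epsilon_1\cdots\epsilon_m\}$. On this torus no $w_j$ vanishes and the Bernoulli series converge uniformly, which also justifies evaluating termwise. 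Integrating first over the free variable gives $\Res{y=0}y^{-n-1}\,dy=0$, since $n\ge1$. This is the correct form of your ``missing pole'' heuristic: a face term dies only if some $L_i$ vanishes identically on that face, and $L_{j_0}=x_{j_0}$ guarantees exactly this. With this step inserted, your argument coincides with the paper's proof.
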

	
	For a general $G(\y)$, the formulas for $I_G(-n)$ and $I_G'(-n)$ do not simplify further. In the case of $\xi_\L(s)$, however, the combinatorics of the underlying hyperplane arrangement becomes decisive. The crucial feature is that $\xi_\L(s)$ sums over all lattice points in $\mathbb{Z}^r$, rather than over $\mathbb{N}^r$ as in Shintani-type zeta functions. This produces cancellations that greatly simplify the formulas. To organize these cancellations, we use valuations on polyhedral cones as a bookkeeping device for manipulating the otherwise unwieldy $G_\L(\y)$ \cite{BoussicaultFerayLascouxReiner2012,barvinok1992computing}. This interplay between the analytic and combinatorial arguments is one of the main themes of the article.
	
	As a further consequence of the cone-valuation technique, we obtain the following identity associated with a root system; see Proposition \ref{group_alg_identity}.
	\begin{proposition} \label{intro-group-alg-identity}
		Let $\lambda_1,\ldots,\lambda_r$ be the fundamental weights of $\Phi$. Then the following identity holds in the fraction field of the group algebra of the weight lattice:
		$$\sum_{w\in W}\prod_{i=1}^r\frac{1}{1-\e^{w\lambda_i}}=E(\Phi).$$
	\end{proposition}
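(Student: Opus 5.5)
The plan is to interpret each summand as the lattice-point generating function of a closed Weyl chamber, and then use the cone valuation to show the whole sum is a constant. Let $C=\sum_i\mathbb{R}_{\ge0}\lambda_i$ be the closed fundamental chamber. Since $\lambda_1,\ldots,\lambda_r$ is a $\mathbb{Z}$-basis of the weight lattice $P$, the half-open fundamental parallelepiped of $C$ contains only $0$. Hence the lattice-point valuation $\sigma$ (from indicator functions of rational polyhedral cones to $\Frac(\mathbb{Z}[P])$) gives
$$\sigma(C)=\prod_{i=1}^r\frac{1}{1-\e^{\lambda_i}},\qquad \sigma(wC)=\prod_{i=1}^r\frac{1}{1-\e^{w\lambda_i}}.$$
The left-hand side is therefore $\sigma\bigl(\sum_{w\in W}[wC]\bigr)$. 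I will use the two standard properties of $\sigma$ already invoked in the paper's cone-valuation framework: it is additive on indicator functions, and it vanishes on every cone containing a line.

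\textbf{Step 1 (reduction to a constant).} Write $\sum_w[wC]$ as a combination of indicators of relatively open faces of the Coxeter fan. A face of type $J$ lies in exactly $|W_J|$ closed chambers. Fix a generic vector $v$ and decompose each relatively open face via half-spaces cut out by $v$. Then express $\sum_w[wC]$, modulo indicators of cones containing lines, as $c\cdot[\{0\}]$ plus $\sum_{w\in S}[wC]-\sum_{w\in S}[wC]$-type cancelling terms. A cleaner route is the Lawrence--Varchenko polarization with respect to $v$. Flip each generator $w\lambda_i$ with $(w\lambda_i,v)>0$ to $-w\lambda_i$; this changes $[wC]$ only by cones containing lines, up to the sign $(-1)^{\#\text{flips}}$ and half-open boundaries. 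Every polarized cone then lies in $\{(x,v)\le0\}$. Summing over $w$, the non-apex contributions should cancel because the open chambers tile $\mathbb{R}^r$. What remains is a multiple $c\,[\{0\}]$, so $\sigma$ gives the constant $c$. I expect this bookkeeping, showing that all non-constant parts cancel modulo line-containing cones, to be the main obstacle. If it becomes unwieldy, I would instead show directly that the sum is a $W$-invariant rational function with no poles, by pairing $w$ and $ws_\alpha$ along each wall $\alpha^\perp$, and is bounded in every direction, hence constant.

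\textbf{Step 2 (computing the constant).} Specialize $\e^{x}\mapsto \exp(t(x,v))$ for the generic $v$ and let $t\to+\infty$. The term for $w$ tends to $1$ if $(w\lambda_i,v)<0$ for all $i$, and to $0$ otherwise: any factor with positive exponent decays like $e^{-t(\cdot)}$, and factors with negative exponent tend to $1$. So the constant equals the number of closed chambers contained in the open half-space $\{(x,v)<0\}$, i.e.\ half the number of chambers not met by the generic hyperplane $H=v^\perp$.

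By Zaslavsky's theorem, the number of chambers of $\A$ is $|\chi_\A(-1)|$. The number of chambers cut by $H$ is the number of regions of the restriction $\A^H$. For generic central $H$, the characteristic polynomial of $\A^H$ is the truncation of $\chi_\A(t)/(t-1)$-type data, and one gets that the uncut chambers number $2|\mu_\A(\mathbb{R}^r,\{0\})|=2|\chi_\A(0)|$. A sanity check on $A_2$ gives $6-2=4$, i.e.\ $2$ per side. Hence the constant is $|\mu_\A(\mathbb{R}^r,\{0\})|$. Finally, since the Weyl arrangement is free, Terao's factorization $\chi_\A(t)=\prod_i(t-e_i)$ gives $|\mu_\A(\mathbb{R}^r,\{0\})|=\prod_i e_i=E(\Phi)$, exactly as in the deduction of Theorem \ref{thm:witten_0} from Theorem \ref{thm:conical_0}.
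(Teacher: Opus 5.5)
Your Step~1 is a genuine gap. You never show that $\sum_{w}\mathrm H_{\overline{wC}}$ is a constant, and that is the heart of the statement. In the polarization route, the claim that the non-apex contributions cancel ``because the open chambers tile $\mathbb{R}^r$'' is not an argument. Tiling only gives $\sum_D\ind_D=\ind_{V-\bigcup_{H\in\A}H}$; it says nothing about signed counts of polarized cones, and the cancellation you need is essentially equivalent to the identity itself. The missing idea is an identity that writes a sum of chamber indicators as a combination of indicators of \emph{flats}. This is Proposition~\ref{indicator_identity}, $\sum_{D\in\Ch(\A)}\ind_D=\sum_{Y\in L(\A)}\mu_\A(V,Y)\ind_Y$, which is just the defining recurrence of $\mu_\A$. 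For your closed chambers the analogue is $\sum_D\ind_{\overline D}=\sum_Y|\mu_\A(V,Y)|\ind_Y$, obtained by applying Zaslavsky's theorem to the localizations $\A_X$. Every flat of positive dimension is a rational subspace, hence invariant under a nonzero lattice translation, so its Hilbert series vanishes by Lemma~\ref{pointed_vanishing_lemma}; only $Y=\{0\}$ survives. This is exactly the paper's argument. For the open chamber $wC$ one has $\mathrm H_{wC}=\prod_i\e^{w\lambda_i}(1-\e^{w\lambda_i})^{-1}=(-1)^r\prod_i(1-\e^{-w\lambda_i})^{-1}$. Lemma~\ref{hilbert_chamber_identity} gives $\sum_w\mathrm H_{wC}=\mu_\A(V,\{0\})$, and the substitution $\lambda_i\mapsto-\lambda_i$ together with Lemma~\ref{prod_exp} yields $E(\Phi)$. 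This proves constancy and computes the value at once, so your Step~2 is not needed.

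Your fallback is also wrong as stated. The poles of the summands lie along the divisors $\e^{\mu}=1$ with $\mu\in W\lambda_i$, i.e.\ along weight directions, not root walls. Put $J_i=\Delta-\{\alpha_i\}$, let $w_0$ and $w_{0,J_i}$ be the longest elements of $W$ and $W_{J_i}$, and set $\lambda_{i^*}:=-w_0\lambda_i$. Then a divisor $\e^{\mu}=1$ is shared by the $2|W_{J_i}|$ terms $w'$ with $w'\lambda_i=\mu$ or $w'\lambda_{i^*}=-\mu$, not by a pair $w,ws_\alpha$; for $A_2$, for instance, four terms have a pole along $\e^{\lambda_1}=1$. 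The idea can be repaired by pairing $w$ with $ww_{0,J_i}w_0$. Since $-w_{0,J_i}$ permutes $\{\lambda_j\}_{j\neq i}$ modulo $\mathbb{Z}\lambda_i$, the cofactors of $(1-\e^{\mu})^{-1}$ and of $(1-\e^{-\mu})^{-1}=-\e^{\mu}(1-\e^{\mu})^{-1}$ in the two paired terms have the same restriction to $\{\e^{\mu}=1\}$, so the polar parts cancel. Combined with your boundedness argument, this would give an independent proof of constancy, but none of it is in the proposal. Step~2 itself is correct. For a generic central hyperplane $H$ one has $\chi_{\A^H}(t)=(\chi_\A(t)-\chi_\A(0))/t+\chi_\A(0)$, so exactly $2|\mu_\A(V,\{0\})|$ chambers miss $H$. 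Once constancy is established, your limit computation does identify the constant.
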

	We shall see that this identity is closely related to the evaluation of $\zeta_\Phi(0)$ in Theorem \ref{thm:witten_0}. It is reminiscent of the better-known identity involving the simple roots \cite{macdonald1972poincare}:
	$$\sum_{w\in W}\prod_{\alpha \in \Delta}\frac{1}{1-\e^{w\alpha}}=1.$$
	
	The formulas for $\xi_\L(0)$ and $\xi_\L'(0)$ above suggest several possible extensions. One natural direction, not pursued in this article, is to introduce a rational translation and study
	$$\sum_{\boldsymbol{x} \in \mathbb{Z}^r}^{} {'}\frac{1}{|L_1(\boldsymbol{x} + \boldsymbol{c})\cdots L_N(\boldsymbol{x} + \boldsymbol{c})|^s},\qquad \boldsymbol{c}\in \mathbb{Q}^r.$$
	Such translated sums arise naturally in the study of $\zeta_G(s)$ for compact Lie groups $G$ that are \textit{not} simply-connected.
	
	The paper is organized as follows. Section~2 reviews the necessary background on hyperplane arrangements, valuations on polyhedral cones, and Weyl arrangements. Section~3 develops the analytic input described above. Sections~4 and~5 prove the two main formulas, Theorems \ref{thm:conical_0} and \ref{thm:conical_0_der}, while Section~6 specializes these results to Witten zeta functions and evaluates the quantities $m_\Phi(d)$.
	
	\section*{Acknowledgment}
	The first author has received funding from the European Research Council (ERC) under the European Union’s Horizon 2020 research and innovation programme (grant agreement No. 101001179). 
	\section*{Declaration on AI usage}
	The authors used ChatGPT 5.6 Sol for auxiliary tasks while preparing the manuscript: checking the language, assisting with literature searches, typesetting Tables \ref{table:Witten-zeta-values-at-zero} and \ref{table:m-Phi-values}, generating the concrete numerical illustrations in Examples \ref{Ex_1} and \ref{Ex_2}, and producing the accompanying SageMath code. All generated material has been checked and revised by the authors. ChatGPT also assisted in exploring the proofs of Lemma \ref{lem:first-two-coefficients} and Proposition \ref{laurent_partial_residue}. The core mathematical ideas and overall strategy were developed by the authors, who take full responsibility for the content of the manuscript.
	
	\section{Preliminaries}
	\subsection{Hyperplane arrangements}
	Here we quickly recall the basic facts from the theory of hyperplane arrangement that we shall need. Readers can consult \cite[p.~389-496]{stanley2007geometric} for more background. \par
	Let $V$ be an $n$-dimensional real vector space, a \textit{hyperplane arrangement} $\mathcal{A}$ is a finite collection of (affine) hyperplanes in $V$. It is called \textit{central} if $0 \in \bigcap_{H\in \mathcal{A}} H$. It is called \textit{essential} if its normal vectors span $V$. Its \textit{intersection lattice} $L(\mathcal{A})$ is the collection of all intersections of hyperplanes in $\mathcal{A}$, namely:
	$L(\mathcal{A}) := \{H_1 \cap H_2 \cdots \cap H_k \neq \varnothing : H_i \in \mathcal{A}\}$, where for $k=0$ the intersection is understood to be $V$. Elements of $L(\mathcal{A})$ are called \textit{flats} of $\mathcal{A}$. Connected components of $V - \bigcup_{H\in \mathcal{A}} H$ are called \textit{chambers}, the set of chambers is denoted by $\Ch(\mathcal{A})$. We partially order $L(\mathcal{A})$ with respect to reverse inclusion $X \leq Y \iff X\supset Y$. Then $L(\mathcal{A})$ has a unique minimal element, which is $V$. \par
	Recall the notion of M\"obius function on a finite poset, which we denote as $\mu_\mathcal{A}(X,Y)$ for $X,Y\in L(\mathcal{A})$ and $X\leq Y$. The \textit{characteristic polynomial} of $\mathcal{A}$ is defined as
	$$\chi_\mathcal{A}(t) := \sum_{X\in L(\mathcal{A})} \mu_\A(V,X) t^{\dim X}.$$
	One can show that $\mu_\A(X,Y) (-1)^{\dim(X)-\dim(Y)} > 0$. Recall the \textit{localization} of $\A$ at $X\in L(\mathcal{A})$ is
	$$\A_X:=\{H\in\A:X\subset H\},$$
	which is a subarrangement of $\A$. Write $\ind_S$ for the indicator function of a subset $S \subset V$.
	
	\begin{proposition}\label{indicator_identity}
		For every hyperplane arrangement $\A$ in $V$, we have the following identity of functions on $V$:
		$$\sum_{D\in\Ch(\A)}\ind_{D}=\sum_{Y\in L(\A)} \mu_{\A}(V,Y)\,\ind_Y.$$
	\end{proposition}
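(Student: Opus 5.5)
We prove the identity pointwise: fix $v\in V$ and show that both sides take the same value at $v$. The left-hand side at $v$ is $1$ if $v$ lies in no hyperplane of $\A$ (then $v$ lies in exactly one chamber), and $0$ otherwise, since chambers are the components of the complement and are pairwise disjoint. So the claim is
$$\sum_{\substack{Y\in L(\A)\\ v\in Y}}\mu_\A(V,Y)=\begin{cases}1,& v\notin\bigcup_{H\in\A}H,\\ 0,&\text{otherwise.}\end{cases}$$

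The key observation is that the flats containing $v$ form a principal order ideal of $L(\A)$. Let $\A_v:=\{H\in\A: v\in H\}$ and set $X_v:=\bigcap_{H\in\A_v}H$, with $X_v=V$ when $\A_v=\varnothing$. Then $X_v\in L(\A)$. If $Y=H_1\cap\cdots\cap H_k$ is a flat containing $v$, then every $H_i$ lies in $\A_v$, so $Y\supset X_v$, i.e.\ $Y\le X_v$. Conversely, every flat $Y\le X_v$ satisfies $Y\supset X_v\ni v$. Hence
$$\{Y\in L(\A): v\in Y\}=\{Y\in L(\A): V\le Y\le X_v\}=[V,X_v].$$

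The sum therefore becomes $\sum_{V\le Y\le X_v}\mu_\A(V,Y)$. By the defining recursion of the M\"obius function, this equals $1$ if $X_v=V$ and $0$ if $X_v\neq V$. Finally, $X_v=V$ exactly when $\A_v=\varnothing$: a nonempty intersection of hyperplanes is a proper subset of $V$. This is precisely the case where $v$ avoids all hyperplanes, so the two sides agree at $v$.

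The only point needing care is the identification of $\{Y: v\in Y\}$ with an interval. It relies on $L(\A)$ being closed under the nonempty intersections $\bigcap\A_v$, and this holds even for affine arrangements because $v$ itself lies in the intersection. Everything else is the definition of $\mu_\A$.
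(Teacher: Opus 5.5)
Your proposal is correct and follows the paper's proof essentially step for step. Both arguments evaluate the two sides pointwise and take the smallest flat containing the point, so that the flats containing it form the interval below that flat. Both then finish with the defining recurrence of $\mu_\A$.
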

	\begin{proof}
		Evidently,
		$$\sum_{D\in\Ch(\A)}\ind_D=\ind_{V-\bigcup_{H\in\A}H}.$$
		Fix $x\in V$, and define $X$ to be smallest flat containing $x$, i.e.
		$$X:=\bigcap_{\substack{H\in\A\\x\in H}}H,$$
		where the intersection of the empty family is understood to be $V$.
		For every $Y\in L(\A)$, we have $x\in Y\iff X\subset Y\iff Y\leq X$. Therefore
		$$\begin{aligned}
			\sum_{Y\in L(\A)}\mu_\A(V,Y)\ind_Y(x)
			&=\sum_{\substack{Y\in L(\A)\\Y\leq X}}\mu_\A(V,Y)=\begin{cases}
				1,&X=V,\\
				0,&X\neq V,
			\end{cases}
		\end{aligned}$$
		by the defining recurrence for the M\"obius function. Finally,
		$X=V$ if and only if $x\notin\bigcup_{H\in\A}H$. Thus the last
		display is precisely the value at $x$ of
		$\ind_{V-\bigcup_{H\in\A}H}$, completing the proof.
	\end{proof}
	
	\begin{lemma}\label{Weisner_lemma}
		Let $\A$ be a central and essential hyperplane arrangement in $V$. Then for any hyperplane $H\in \A$, we have
		$$\sum_{\substack{\dim X=1\\X\not\subset H}}\mu_{\mathcal A}(V,X)=-\mu_{\mathcal A}(V,\{0\}).$$
	\end{lemma}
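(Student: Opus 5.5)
This is Weisner's theorem applied to the lattice $L(\A)$ with the atom $H$ and the top element $\{0\}$. I would prove it directly from the defining recurrence of the M\"obius function, using a dimension count.

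\textbf{Step 1: reduce to a single recurrence.} Since $\A$ is central and essential, $L(\A)$ is a geometric lattice. Its bottom is $V$, its top is $\{0\}$, and the rank of a flat $X$ is $\operatorname{codim} X$. For a flat $Y$, write $\mu(Y):=\mu_\A(V,Y)$. The identity to prove is equivalent to
$$\sum_{Y\in L(\A),\ Y\not\subset H}\mu(Y)\,\bigl[\dots\bigr],$$
but it is cleaner to argue as follows. Consider the sum
$$S:=\sum_{\substack{X\in L(\A)\\ X\cap H=\{0\}}}\mu(X).$$
A flat $X$ satisfies $X\cap H=\{0\}$ exactly when $X=\{0\}$, or when $\dim X=1$ and $X\not\subset H$. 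Indeed, if $\dim X\ge 2$, then $X\cap H$ has dimension at least $1$. So $S=\mu(\{0\})+\sum_{\dim X=1,\,X\not\subset H}\mu(X)$, and the lemma is equivalent to $S=0$.

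\textbf{Step 2: Weisner's argument.} Apply the Möbius recurrence to the interval $[V,Y]$ for each $Y\subset H$, and combine the results by summing over $Y$. Concretely, define
$$T:=\sum_{X\in L(\A)}\mu(X)\sum_{\substack{Z\in L(\A)\\ Z\subset X,\; Z\subset H}}\mu_\A(Z,\{0\}).$$
Here $Z\subset X$ and $Z\subset H$ together mean $Z\ge X\vee H$, where $X\vee H=X\cap H$ (this intersection is again a flat, since $\A$ is central). The inner sum therefore runs over the interval $[X\cap H,\{0\}]$. By the dual recurrence, it vanishes unless $X\cap H=\{0\}$, in which case it equals $1$. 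Hence $T=S$.

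On the other hand, swap the order of summation in $T$:
$$T=\sum_{Z\subset H}\mu_\A(Z,\{0\})\sum_{V\ge X\ge\dots,\;X\supset Z}\mu(X).$$
The inner sum is taken over $X\in[V,Z]$, so it equals $\sum_{X\le Z}\mu(X)=\delta_{Z,V}$. But $Z\subset H$ forces $Z\neq V$, so every inner sum is $0$. Hence $T=0$, and therefore $S=0$, which proves the lemma.

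\textbf{Checks and main obstacle.} Two points need care. First, $X\cap H$ must be a flat for every flat $X$; this holds because it is an intersection of hyperplanes in $\A$ and is nonempty by centrality. Second, the geometric reduction in Step 1 must really cover all flats meeting $H$ only in $0$. Essentiality guarantees that $\{0\}$ is a flat. The main obstacle is just writing the double-sum interchange correctly, with the order conventions of reverse inclusion, since $\le$ means $\supset$.
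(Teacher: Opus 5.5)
Your proof is correct and follows the paper's route: the same reduction to the flats $X$ with $X\cap H=\{0\}$ (namely $\{0\}$ and the one-dimensional flats not contained in $H$), followed by Weisner's theorem for the lattice $L(\mathcal A)$ with $a=H$. The only difference is that the paper cites Weisner's theorem (Stanley, Corollary~3.9.3), while you prove it inline by the standard double-sum interchange, which works; two cosmetic fixes are needed: drop the unfinished display in Step~1, and write the swapped inner sum as running over $V\le X\le Z$ rather than $V\ge X$.
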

	\begin{proof}
		Although a direct proof can be given, we deduce it as a consequence of Weisner’s theorem in the context of finite lattice \cite[Corollary~3.9.3]{stanley1986enumerative}. More precisely, let $L$ be a finite lattice with minimal element $\hat{0}$ and maximal element $\hat{1}$. Let $\hat{0}\neq a\in L$, then
		$$\sum_{x: x\vee a = \hat{1}} \mu(\hat{0},x) = 0.$$
		One specializes this assertion to $L(\A)$, which is a lattice with $\hat{0}=V$ and $\hat{1}=\{0\}$: centrality gives the lattice structure, while essentiality gives $\hat{1}=\{0\}$. Take $a = H$, there are two types of elements in $L(\A)$ with $X\cap H = x\vee a = \hat{1} = \{0\}$: either $X = \{0\}$ or $\dim X = 1$ and $X\not\subset H$, this gives the claim immediately.
	\end{proof}
	
	\subsection{Valuation on polyhedral cones}
	Let $V = \mathbb{R}^r$, fix a full-rank lattice $\Lambda$ in $V$. Let $\mathbb{Z}[\Lambda]$ be the group algebra with basis $\{\e^x\}_{x\in\Lambda}$ and multiplication $\e^x \e^y = \e^{x+y}$. If $\{v_1,\ldots,v_r\}$ is a basis of $\Lambda$, then
	$$\mathbb{Z}[\Lambda] \cong \mathbb{Z}[\e^{\pm v_1},\ldots,\e^{\pm v_r}].$$
	Write $\mathbb{Q}(\Lambda) := \Frac(\mathbb{Z}[\Lambda])$. Also let $\mathbb{Z}\{\{\Lambda\}\}$ be the abelian group whose elements are of the form $\sum_{x\in \Lambda} c_x \e^{x}$ $(c_x\in \mathbb{Z})$ with no finiteness condition on the support. Note that $\mathbb{Z}\{\{\Lambda\}\}$ is not a ring (since the obvious multiplication can involve infinitely many operations), but it contains $\mathbb{Z}[\Lambda]$ and is a $\mathbb{Z}[\Lambda]$-module.
	For an element $h\in \mathbb{Z}\{\{\Lambda\}\}$, if there are $p,q\in \mathbb{Z}[\Lambda]$ with $q\ne 0$ such that $qh=p$ in $\mathbb{Z}\{\{\Lambda\}\}$, then we say $h$ \emph{sums to} $\frac{p}{q}\in\mathbb{Q}(\Lambda)$ and interpret $h$ as $\frac{p}{q}\in\mathbb{Q}(\Lambda)$. This value is well-defined.\par
	For $l\in V_{\mathbb{C}}^*:=\operatorname{Hom}(V,\mathbb{C})$, write $\langle l,x\rangle :=l(x)$ for the natural paring between $V_{\mathbb{C}}^*$ and $V$. Define the ring homomorphism $\operatorname{ev}_l:\mathbb{Z}[\Lambda]\to \mathbb{C}$, $\e^x \mapsto e^{-\langle l, x \rangle}$, here $e$ is the base of natural logarithm. If $h\in \mathbb{Z}\{\{\Lambda\}\}$ sums to $\frac{p}{q}\in \mathbb{Q}(\Lambda)$ with $\operatorname{ev}_l(q)\ne 0$, then we write $h(l):=\operatorname{ev}_l(p)/\operatorname{ev}_l(q)$. This is independent of the chosen representation.\par
	A \textit{(rational polyhedral) cone} $K$ is a set of the form $\mathbb{R}_{\geq 0} u_1 + \cdots + \mathbb{R}_{\geq 0}u_M$ for some vectors $u_i\in \Lambda$. We define its \textit{dual cone} as 
	$$K^\ast = \{ l\in V^\ast : \langle l,x\rangle \geq 0 \text{ for all }x\in K\}.$$\par
	For any subset $S\subset V$, define
	$$\mathrm{H}_S := \sum_{x\in S\cap \Lambda} \e^{x} \in \mathbb{Z}\{\{\Lambda\}\}.$$
	For a cone $K$, $K\cap \Lambda$ is an affine semigroup, and $\mathrm{H}_K$ is its finely graded \textit{Hilbert series}.
	It enjoys the following properties \cite[Proposition~2.2]{BoussicaultFerayLascouxReiner2012}.
	
	\begin{proposition}\label{prop:Hilbert-series-properties}
		(a) $\mathrm{H}_K$ sums to an element in $\mathbb{Q}(\Lambda)$, so we can interpret $\mathrm{H}_K \in \mathbb{Q}(\Lambda)$.\\
		(b) If $l$ is in the interior of $K^\ast$, then the sum $\sum_{x\in K\cap \Lambda} e^{-l(x)}$ converges absolutely as an infinite series, and its value equals $\mathrm{H}_K(l)$.\\
		(c) Let $K$ be a simplicial cone, and let $\boldsymbol{u}=\{u_1,\ldots,u_d\}$ be the set of primitive generators of its extreme rays\footnote{See the reference above for definitions of these terminologies.}. Put $\Pi_{\boldsymbol{u}}:=\left\{\sum_{i=1}^d c_i u_i : 0\le c_i<1\right\}.$
		Then $$\mathrm{H}_K=\frac{\sum_{u\in \Pi_{\boldsymbol{u}}\cap \Lambda} \e^{u}}{\prod_{i=1}^d (1-\e^{u_i})}.$$
		(d) (valuation criterion) If there is a linear relation $\sum_{i=1}^t a_i \ind_{K_i} = 0$ between indicator functions of rational cones $K_i$, then $\sum_{i=1}^t a_i \mathrm{H}_{K_i} = 0$.
	\end{proposition}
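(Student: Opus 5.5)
We prove the four assertions (a)--(d) of Proposition \ref{prop:Hilbert-series-properties} in the order (c), (a), (b), (d), since each reduces to the simplicial case.

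\emph{Step 1: part (c).} Let $K$ be simplicial with primitive ray generators $u_1,\ldots,u_d$. These are linearly independent, so every $x\in K\cap\Lambda$ has a unique expression $x=\sum c_i u_i$ with $c_i\ge0$. Write $c_i=\lfloor c_i\rfloor+\{c_i\}$. Since $\sum \lfloor c_i\rfloor u_i\in\Lambda$, the fractional part $\sum\{c_i\}u_i$ is a lattice point of $\Pi_{\boldsymbol u}$. This gives a bijection
$$K\cap\Lambda\;\cong\;(\Pi_{\boldsymbol u}\cap\Lambda)\times\mathbb{Z}_{\ge0}^d .$$
Hence, in $\mathbb{Z}\{\{\Lambda\}\}$,
$$\prod_i(1-\e^{u_i})\,\mathrm H_K=\sum_{u\in\Pi_{\boldsymbol u}\cap\Lambda}\e^u .$$
The right side is a finite sum, because $\Pi_{\boldsymbol u}$ is bounded. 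The multiplication on the left is legitimate: it is telescoping termwise, and each coefficient involves only finitely many terms since the $u_i$ are independent.

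\emph{Step 2: part (a).} Triangulate $K$ into simplicial cones using rays of $K$. By inclusion--exclusion over the faces of the triangulation, $\ind_K$ is a $\mathbb{Z}$-combination of indicators of relatively open simplicial cones, or equivalently of closed simplicial cones, since these are faces of the triangulation. Each such cone has $\mathrm H$ summing to an element of $\mathbb{Q}(\Lambda)$ by Step 1. A finite sum of elements that each sum to rational functions sums to the sum of those rational functions (multiply by a common denominator). Hence $\mathrm H_K$ sums to an element of $\mathbb{Q}(\Lambda)$.

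A small subtlety arises if $K$ contains a line. Then $\mathrm H_K$ sums to $0$, since $(1-\e^{v})\mathrm H_K=0$ for a lattice vector $v$ with $\pm v\in K$. For pointed cones the triangulation argument applies directly.

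\emph{Step 3: part (b).} Take $l$ in the interior of $K^*$. Then $l>0$ on $K\setminus\{0\}$, so $l(x)\ge c|x|$ on $K$ for some $c>0$, and the series converges absolutely. Evaluate the same triangulation identity at $l$. In Step 1, the geometric series $\sum_{k\ge0}e^{-k\,l(u_i)}$ converges, because $l(u_i)>0$, to $1/(1-e^{-l(u_i)})$. Absolute convergence lets us rearrange the sum according to the decomposition, so the numerical sum equals $\mathrm H_K(l)$. Note that $\operatorname{ev}_l(q)\ne0$ for $q=\prod(1-\e^{u_i})$.

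\emph{Step 4: part (d), the main obstacle.} Suppose $\sum_i a_i\ind_{K_i}=0$. The map $S\mapsto\mathrm H_S$ is linear on indicator functions at the level of $\mathbb{Z}\{\{\Lambda\}\}$, since it is just restriction to $\Lambda$. Therefore $\sum_i a_i\mathrm H_{K_i}=0$ holds as formal series.

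The difficulty is that passing to $\mathbb{Q}(\Lambda)$ is not injective on summable series: for instance, $\mathrm H_{\mathbb{R}}=0$ in $\mathbb{Q}(\Lambda)$ while the formal series is nonzero. However, the implication we need runs the other way. Suppose each $\mathrm H_{K_i}$ sums to $p_i/q_i$. Put $Q=\prod q_i$. Then
$$Q\sum_i a_i\mathrm H_{K_i}=\sum_i a_i\,p_i\,(Q/q_i)$$
in $\mathbb{Z}\{\{\Lambda\}\}$, using that the module action distributes over finite sums. The left side is $Q\cdot0=0$, so $\sum_i a_i p_i/q_i=0$ in $\mathbb{Q}(\Lambda)$. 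The point requiring care is the module action: the product $Q\cdot h$ must be well defined, and it must be compatible with the statement that "$h$ sums to $p/q$". Since $Q\in\mathbb{Z}[\Lambda]$ is a finite combination, this holds, which settles (d).
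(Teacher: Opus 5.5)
Your proof is correct. It necessarily differs from the paper, which gives no proof of its own: it imports (a)--(d) from Boussicault--F\'eray--Lascoux--Reiner (Proposition~2.2 there). Its only addition is that the non-full-dimensional case of (b) follows by applying the cited statement inside $\operatorname{Span}_{\mathbb R}K$ with the lattice $\Lambda\cap\operatorname{Span}_{\mathbb R}K$.

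Your argument is self-contained:
\begin{itemize}
\item The fundamental-parallelepiped bijection gives (c).
\item Triangulation plus inclusion--exclusion over faces gives (a). You dispose of non-pointed cones by the same translation trick as Lemma~\ref{pointed_vanishing_lemma}.
\item You prove (b) directly for every pointed cone. No passage to the span is needed, because $l\in\operatorname{int}K^\ast$ already forces $l>0$ on $K\setminus\{0\}$, whatever $\dim K$ is.
\end{itemize}

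The most illuminating part of your write-up is (d). With the paper's definition of ``sums to'' via $qh=p$ in the module $\mathbb Z\{\{\Lambda\}\}$, the valuation property is a formal consequence of (a) and of the linearity of restriction to $\Lambda$; this is essentially Lawrence's approach. The only real inputs are the existence of denominators and the well-definedness of the value, which uses that $\mathbb Z[\Lambda]$ is a domain. Your argument makes clear why (d) still holds when some $K_i$ contain lines: there the formal series is nonzero but sums to $0$. The citation buys brevity; your route buys transparency on exactly this point.

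One small correction to Step~1. The product of an element of $\mathbb Z[\Lambda]$ with an element of $\mathbb Z\{\{\Lambda\}\}$ is always well defined, since the Laurent polynomial has finite support. What linear independence of the $u_i$ actually secures is that $\sum_{\boldsymbol m\in\mathbb Z_{\ge0}^d}\e^{\sum_i m_iu_i}$ is a $0/1$ series, so the telescoping yields exactly $1$.
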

	For (b), the cited proposition assumes that $K$ is full-dimensional. The formulation above follows by applying it in $\operatorname{Span}_{\mathbb{R}}K$ with the full-rank lattice $\Lambda\cap\operatorname{Span}_{\mathbb{R}}K$. 
	
	As an example, if $K = \mathbb{R}_{\geq 0} v_1 + \cdots + \mathbb{R}_{\geq 0}v_r$ with $\{v_1,\ldots,v_r\}$ a basis of $\Lambda$, then $$\mathrm{H}_K = \sum_{m_1,\ldots,m_r\geq 0} \prod_{i=1}^r \e^{m_i v_i} = \prod_{i=1}^r (1-\e^{v_i})^{-1},
	\qquad \mathrm{H}_K(l) = \prod_{i=1}^r (1-e^{-l(v_i)})^{-1}.$$\par 
	A \textit{half-open cone} is a subset of $V$ of the form
	$$k_1 u_1 + \cdots + k_M u_M, \qquad u_i\in \Lambda, \ k_i\in\{ \mathbb{R}_{>0}, \mathbb{R}_{\geq 0}\}.$$
	Using the inclusion-exclusion principle, property (a) extends to finite unions of half-open cones and property (d) holds for such sets as well. Property (b) also holds for a half-open cone $K$, with $K^*$ replaced by $(\overline{K})^*$. 
	By slight abuse of terminology, we also call $\mathrm{H}_S$ the Hilbert series of $S$ when $S$ is a finite union of half-open rational cones.
	
	\begin{lemma}\label{pointed_vanishing_lemma}
		If a half-open cone $K$ is invariant under translation by a nonzero rational vector, then $\mathrm{H}_K$ sums to $0$.
	\end{lemma}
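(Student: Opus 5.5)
Let $v\in\Lambda_{\mathbb Q}$, $v\neq 0$, with $K+v=K$. Since $K$ is a cone, I may rescale $v$ by a positive integer and assume $v\in\Lambda$. The plan is to show directly that $(1-\e^{v})\mathrm{H}_K=0$ in $\mathbb{Z}\{\{\Lambda\}\}$. By the definition of ``sums to'' with $q=1-\e^v\neq 0$ and $p=0$, this gives $\mathrm{H}_K = 0/(1-\e^v)=0$ in $\mathbb{Q}(\Lambda)$.

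The computation itself is one line. Because $v\in\Lambda$, translation by $v$ is a bijection of $\Lambda$, and $K+v=K$ means it is also a bijection of $K\cap\Lambda$ onto itself. Hence
$$\e^{v}\mathrm{H}_K=\sum_{x\in K\cap\Lambda}\e^{x+v}=\sum_{y\in (K+v)\cap\Lambda}\e^{y}=\mathrm{H}_K.$$
The product $\e^v\mathrm{H}_K$ is a legitimate $\mathbb{Z}[\Lambda]$-module operation, since each coefficient involves only one term. Therefore $(1-\e^v)\mathrm{H}_K=0$.

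The one point needing care, which is the only potential obstacle, is consistency with the convention that $\mathrm{H}_K$ is ``interpreted'' in $\mathbb{Q}(\Lambda)$. Well-definedness is asserted in the paper: if $q h=p$ and $q'h=p'$, then $p/q=p'/q'$. So $\mathrm{H}_K$, which by Proposition~\ref{prop:Hilbert-series-properties}(a), extended to half-open cones, sums to some $p/q$, must equal $0$. I would note this is legitimate because $\mathbb{Z}\{\{\Lambda\}\}$ is a module over the integral domain $\mathbb{Z}[\Lambda]$: from $qh=p$ and $(1-\e^v)h=0$ we get $(1-\e^v)p=(1-\e^v)qh=q(1-\e^v)h=0$ in $\mathbb{Z}[\Lambda]$, and hence $p=0$.
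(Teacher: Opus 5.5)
Your proposal is correct and follows essentially the same route as the paper: reduce to a lattice vector, reindex the sum to get $\e^{v}\mathrm{H}_K=\mathrm{H}_K$, and conclude that $(1-\e^{v})\mathrm{H}_K=0$, so $\mathrm{H}_K$ sums to $0$. Your justification of the rescaling and your well-definedness remark (using that $\mathbb{Z}[\Lambda]$ is an integral domain) are valid additions that the paper leaves implicit.
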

	\begin{proof}
		Suppose $K+w=K$ for a nonzero rational vector $w$, we can assume $w\in \Lambda$. Then
		$$\mathrm H_{K} = \sum_{x\in K\cap \Lambda }\e^{x} =\sum_{x\in K\cap \Lambda }\e^{x+w}=\e^{w}\mathrm H_{K}.$$
		Thus $(1-\e^w)\mathrm{H}_K=0$, so $\mathrm{H}_{K}$ sums to $0$.
	\end{proof}
	
	Now let $\mathcal{A}$ be a central and essential rational\footnote{this means each hyperplane in $\mathcal{A}$ is defined by the kernel of an element in $\operatorname{Hom}(\mathbb{Q}\otimes_\mathbb{Z} \Lambda,\mathbb{Q})$. If $\Lambda  = \mathbb{Z}^r$, this means the hyperplanes are defined by linear forms with rational coefficients.} hyperplane arrangement in $V$. For each chamber $D\in \Ch(\A)$, both $D$ and its closure $\overline{D}$ are half-open cones. Thus their Hilbert series are defined and belong to $\mathbb{Q}(\Lambda)$. 
	\begin{lemma}\label{hilbert_chamber_identity}
		For a central and essential rational arrangement $\mathcal{A}$, we have $$\sum_{D\in \Ch(\A)} \mathrm{H}_D = \mu_\A(V,\{0\}).$$
	\end{lemma}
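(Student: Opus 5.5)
We apply Proposition \ref{indicator_identity} to obtain an identity of indicator functions, and then pass to Hilbert series using the valuation criterion, Proposition \ref{prop:Hilbert-series-properties}(d). By Proposition \ref{indicator_identity},
$$\sum_{D\in\Ch(\A)}\ind_D=\sum_{Y\in L(\A)}\mu_\A(V,Y)\,\ind_Y .$$
Every chamber $D$ is a finite union of half-open rational cones. Indeed, since $\A$ is central, $D$ is an open polyhedral cone cut out by rational linear inequalities, and any such cone can be triangulated into half-open simplicial cones with rational generators. Every flat $Y$ is a rational linear subspace, so it is also a finite union of half-open cones. For example, $Y$ is the union of $\{0\}$ and the half-open orthant-type pieces obtained by choosing signs on a rational basis of $Y$. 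The extension of property (d) to finite unions of half-open cones, noted after Proposition \ref{prop:Hilbert-series-properties}, then gives
$$\sum_{D\in\Ch(\A)}\mathrm H_D=\sum_{Y\in L(\A)}\mu_\A(V,Y)\,\mathrm H_Y$$
as an identity in $\mathbb{Q}(\Lambda)$.

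Next we evaluate $\mathrm H_Y$ for each flat $Y$. If $\dim Y\ge 1$, then $Y$ is a linear subspace containing a nonzero rational vector $w$, and $Y+w=Y$. Lemma \ref{pointed_vanishing_lemma} applies to $Y$ viewed as a half-open cone, for instance $Y=\sum_i \mathbb{R}_{\ge0}(\pm v_i)$ over a rational basis $v_1,\ldots,v_k$ of $Y$. Hence $\mathrm H_Y=0$.

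For $Y=\{0\}$, which is a flat because $\A$ is central and essential, we have $\mathrm H_{\{0\}}=\e^0=1$. The right-hand side therefore collapses to $\mu_\A(V,\{0\})$, which proves the lemma.

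The main point requiring care is the legitimacy of the valuation step. Property (d) is stated for rational cones, and its extension to finite unions of half-open cones is asserted in the text. We need to make sure that open chambers and full linear subspaces both fall into that class, which the decompositions above address. A related check is that, for a subspace, Lemma \ref{pointed_vanishing_lemma} is applied to the same formal element $\mathrm H_Y$, namely the sum over all lattice points of $Y$. This holds because $\mathrm H_S$ depends only on the set $S$, not on how $S$ is presented as a union of cones. So the relation $(1-\e^w)\mathrm H_Y=0$ holds in $\mathbb{Z}\{\{\Lambda\}\}$, and $\mathrm H_Y$ sums to $0$.
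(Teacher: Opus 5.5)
Your proposal is correct and follows the paper's proof essentially verbatim. You apply the Hilbert-series valuation to the indicator identity of Proposition \ref{indicator_identity}, kill every positive-dimensional flat with Lemma \ref{pointed_vanishing_lemma}, and use essentiality to leave only $\mathrm H_{\{0\}}=1$; your added checks that open chambers and linear subspaces are (finite unions of) half-open rational cones are a harmless supplement to what the paper takes for granted.
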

	\begin{proof}
		Apply the Hilbert-series valuation to the indicator-function identities in Proposition~\ref{indicator_identity}. This gives, in $\mathbb Q(\Lambda)$,
		$$\sum_{D\in\Ch(\A)}\mathrm H_D
		=\sum_{X\in L(\A)}\mu_\A(V,X)\mathrm H_X.$$
		Because $\A$ is central, every flat $X$ is a linear subspace. If $\dim X>0$, rationality of $X$ provides a nonzero vector $w\in X\cap\Lambda$, and $X+w=X$. Lemma~\ref{pointed_vanishing_lemma} therefore gives $\mathrm H_X=0$. Thus only $X$ with $\dim X =0$ survives. Since $\A$ is essential, this means $X = \{0\}$, and $\mathrm H_{\{0\}}=\e^0=1$. 
	\end{proof}
	
	\subsection{Root system and Weyl arrangement}\label{root_system_subsection}
	For a crystallographic root system $\Phi$ of rank $r$ in a $r$-dimensional Euclidean space $V$, with inner product pairing $(\cdot,\cdot)$ and coroot $\alpha^\vee := \frac{2\alpha}{(\alpha,\alpha)}$. We shall adopt the following notations associated to $\Phi$:
	\begin{itemize}[leftmargin=*]
		\item $\Delta := \{\alpha_1,\ldots,\alpha_r\}$: a fixed set of simple roots, with $\{\lambda_1,\ldots,\lambda_r\}$ the corresponding fundamental weights;
		\item $\Phi^+$: positive roots with respect to $\Delta$; 
		\item $N := |\Phi^+|$, the number of positive roots; 
		\item $W$: Weyl group of $\Phi$;
		\item $E(\Phi)$: the product of exponents $e_1,\ldots,e_r$ of $W$;
		\item $\Phi_J, W_J$: the parabolic root system and Weyl group for a subset $J\subset \Delta$;
		\item $H_\gamma = \{x\in V: (x,\gamma) = 0\}$ : the reflecting hyperplane of $\gamma\in \Phi$;
		\item $\mathcal{A}$ : the essential and central hyperplane arrangement (known as \textit{Weyl arrangement}) formed by $\{H_\gamma : \gamma\in \Phi\}$.
	\end{itemize}
	
	There is an extensive literature on Weyl arrangements; see, for example, \cite{athanasiadis2000deformations,yoshinaga2014freeness,douglass2012invariants}. We cite the following result.
	
	\begin{lemma}\label{prod_exp}
		The characteristic polynomial of $\mathcal{A}$ is
		$$\chi_{\mathcal{A}}(t) = \prod_{i=1}^r (t-e_i).$$
		In particular, $\mu_\A(V,\{0\}) = (-1)^r E(\Phi)$.
	\end{lemma}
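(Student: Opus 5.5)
This is a classical fact, so the plan is to assemble it from the theory of free arrangements rather than prove it from scratch. The key input is Saito's theorem that the Weyl arrangement $\A$ is free. Its module of logarithmic derivations $D(\A)$ is a free module over the symmetric algebra $S=\mathbb{R}[V]$, with homogeneous basis of degrees $e_1,\ldots,e_r$.

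To produce this basis, let $f_1,\ldots,f_r$ be basic invariants of $W$ (Chevalley's theorem), homogeneous of degrees $d_i=e_i+1$. Let $Q=\prod_{\gamma\in\Phi^+}\gamma$ be the defining polynomial of $\A$, and fix an orthonormal basis $x_1,\ldots,x_r$ of coordinates on $V$. The candidate derivations are
$$\theta_i=\sum_{j}\frac{\partial f_i}{\partial x_j}\,\partial_{x_j},\qquad \deg\theta_i=e_i.$$
The verification proceeds in three steps.
\begin{enumerate}[leftmargin=*]
\item Each $\theta_i$ lies in $D(\A)$. For $\gamma\in\Phi$, the derivative $\theta_i(\gamma)$ is the directional derivative of $f_i$ along $\gamma$. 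Since $f_i$ is invariant under the reflection $s_\gamma$, this derivative is anti-invariant under $s_\gamma$, hence divisible by $\gamma$.
\item The Jacobian $\det(\partial f_i/\partial x_j)$ is a nonzero multiple of $Q$. This is the standard fact that the Jacobian of basic invariants is anti-invariant and of degree $\sum e_i=N$.
\item Saito's criterion then shows that $\theta_1,\ldots,\theta_r$ form a basis of $D(\A)$, so the exponents of the free arrangement $\A$ are exactly $e_1,\ldots,e_r$.
\end{enumerate}

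Terao's factorization theorem \cite{terao1981generalized} now gives $\chi_\A(t)=\prod_{i}(t-e_i)$. Since $\A$ is essential, the only flat of dimension $0$ is $\{0\}$. Hence the constant term of $\chi_\A(t)$ equals $\mu_\A(V,\{0\})$, and it also equals $\prod_i(-e_i)=(-1)^rE(\Phi)$.

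A more elementary alternative is available: Orlik and Solomon computed the Poincaré polynomial of a reflection arrangement as $\prod(1+e_it)$, and one can convert via $\chi_\A(t)=t^r\pi(\A,-1/t)$. In the paper itself, however, I would simply cite Saito, Terao, and Orlik--Solomon, since the statement is marked as a cited result.

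The only delicate step is (2), identifying the Jacobian with $Q$ up to a scalar. The argument is as follows.
\begin{itemize}[leftmargin=*]
\item The Jacobian $J$ is nonzero by algebraic independence of the $f_i$.
\item $J$ is anti-invariant, so it is divisible by each $\gamma$, and hence by their product $Q$.
\item $\deg J=\sum_i(d_i-1)=\sum_i e_i=N=\deg Q$. Here $\sum_i e_i=N$ is the classical count of reflections.
\end{itemize}
Together these force $J=cQ$ with $c\neq 0$.
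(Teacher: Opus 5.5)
Your proposal is correct and follows the same route as the paper: the Weyl arrangement is free with exponents $e_1,\ldots,e_r$, Terao's factorization theorem gives $\chi_\A(t)$, and $\mu_\A(V,\{0\})$ is then read off as its constant term. The only difference is that the paper cites freeness directly (Orlik--Terao, Theorem 6.60), while you sketch its standard proof via Saito's criterion applied to the gradients of the basic invariants, and that sketch is carried out correctly.
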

	\begin{proof}
		It is known that the Weyl arrangement is free \cite[Theorem~6.60]{orlik1992arrangements}, with exponents equal to the exponents $e_1,\ldots,e_r$ of the Weyl group. Terao's factorization theorem \cite{terao1981generalized} then gives the result. See also \cite[Corollary~2.6]{athanasiadis2000deformations}.
	\end{proof}
	
	Combining the valuation technique above with Lemma \ref{prod_exp} yields the following identity stated in the introduction. Although it will not be used later, we restate and prove it, since it is of independent interest.
	\begin{proposition}\label{group_alg_identity}
		The following identity holds in the fraction field of the group algebra of the weight lattice:
		$$\sum_{w\in W}\prod_{i=1}^r\frac{1}{1-\e^{w\lambda_i}}=E(\Phi).$$
	\end{proposition}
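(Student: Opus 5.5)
Throughout, take $\Lambda$ to be the weight lattice, and take $\A$ to be the Weyl arrangement, which is rational with respect to $\Lambda$. The plan is to apply Lemma \ref{hilbert_chamber_identity} to $\A$ and then compute the Hilbert series of each chamber explicitly. The chambers of $\A$ are exactly the open Weyl chambers $wC$ for $w\in W$, where
$$C=\{x\in V:(x,\alpha_i)>0\ \text{for all } i\}$$
is the open fundamental chamber. The Weyl group acts simply transitively on these chambers. Lemma \ref{hilbert_chamber_identity} together with Lemma \ref{prod_exp} then gives
$$\sum_{w\in W}\mathrm{H}_{wC}=\mu_\A(V,\{0\})=(-1)^rE(\Phi).$$

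The next step is to compute $\mathrm{H}_C$. The closure $\overline{C}$ is the simplicial cone $\sum_i\mathbb{R}_{\ge0}\lambda_i$. Since the fundamental weights form a $\mathbb{Z}$-basis of $\Lambda$, the open cone is
$$C=\sum_i\mathbb{R}_{>0}\lambda_i,\qquad C\cap\Lambda=\Bigl\{\sum_i m_i\lambda_i: m_i\ge1\Bigr\}.$$
It follows that
$$\mathrm{H}_C=\prod_{i=1}^r\frac{\e^{\lambda_i}}{1-\e^{\lambda_i}}.$$
To see that this is an identity in $\mathbb{Q}(\Lambda)$, note that it is the example following Proposition \ref{prop:Hilbert-series-properties} shifted by $\e^{\lambda_1+\cdots+\lambda_r}$: multiplying by $\prod(1-\e^{\lambda_i})$ gives $\e^{\rho}$, where $\rho=\sum_i\lambda_i$. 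Since $w$ induces an automorphism of $\Lambda$, and hence of $\mathbb{Z}\{\{\Lambda\}\}$ compatible with the "sums to" relation, we get
$$\mathrm{H}_{wC}=\prod_i\frac{\e^{w\lambda_i}}{1-\e^{w\lambda_i}}.$$

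Next, rewrite each factor as
$$\frac{\e^{x}}{1-\e^{x}}=\frac{1}{1-\e^{x}}-1=-\frac{1}{1-\e^{-x}}.$$
This gives
$$\mathrm{H}_{wC}=(-1)^r\prod_i\frac{1}{1-\e^{-w\lambda_i}}.$$
Summing over $w$ and using $-w\lambda_i=(w w_0)(-w_0\lambda_i)$ handles the minus sign. Here $w_0$ is the longest element, and $-w_0$ permutes the fundamental weights; the permutation of the factors is harmless because the product is over all $i$. Reindexing $w\mapsto ww_0$ then turns the sum into $(-1)^r\sum_{w}\prod_i(1-\e^{w\lambda_i})^{-1}$.

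Alternatively, one can avoid $w_0$ by applying the automorphism $\e^x\mapsto\e^{-x}$ to the whole identity. This automorphism fixes the constant $(-1)^rE(\Phi)$. Comparing with the identity $\sum_{w\in W}\mathrm{H}_{wC}=(-1)^rE(\Phi)$ from the first step and cancelling $(-1)^r$ yields the claim.

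The main point requiring care is the justification that $\mathrm{H}_{wC}=w(\mathrm{H}_C)$ in $\mathbb{Q}(\Lambda)$, together with the use of the involution $\e^x\mapsto\e^{-x}$ on $\mathbb{Q}(\Lambda)$. Both follow because a lattice automorphism preserves the relation $qh=p$ in $\mathbb{Z}\{\{\Lambda\}\}$. Everything else is a direct substitution.
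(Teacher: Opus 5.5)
Your proof is correct and follows essentially the same route as the paper: Lemma~\ref{hilbert_chamber_identity} and Lemma~\ref{prod_exp} applied to the chambers $wC$, the explicit series $\mathrm{H}_{wC}=\prod_i \e^{w\lambda_i}/(1-\e^{w\lambda_i})=(-1)^r\prod_i(1-\e^{-w\lambda_i})^{-1}$, and the involution $\e^x\mapsto\e^{-x}$ to remove the sign. Your $w_0$ reindexing is a valid alternative to that last step, and transporting $\mathrm{H}_C$ by $w$ is equivalent to the paper's direct description of $\Lambda\cap wC$.
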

	\begin{proof}
		Take $\Lambda$ in Section~2.2 to be the weight lattice $\Lambda=\bigoplus_{i=1}^r\mathbb Z\lambda_i,$
		and let $C=\sum_{i=1}^r\mathbb R_{>0}\lambda_i$ be the open fundamental Weyl chamber. The Weyl group acts simply transitively on the chambers of $\A$, so they are precisely the open cones $wC$, with $w\in W$. We also have
		$$\Lambda\cap wC
		=\left\{\sum_{i=1}^r m_iw\lambda_i:m_1,\ldots,m_r\in\mathbb{N}\right\}.$$
		Consequently, in $\mathbb Q(\Lambda)$,
		$$\mathrm H_{wC}
		=\sum_{m_1,\ldots,m_r\geq1}\e^{\sum_i m_iw\lambda_i}
		=\prod_{i=1}^r\frac{\e^{w\lambda_i}}{1-\e^{w\lambda_i}}
		=(-1)^r\prod_{i=1}^r \frac{1}{1-\e^{-w\lambda_i}}.$$
		Lemma \ref{hilbert_chamber_identity} now yields
		$$\sum_{w\in W}\prod_{i=1}^r\frac{1}{1-\e^{-w\lambda_i}}
		=(-1)^r \sum_{D\in\Ch(\A)}\mathrm H_{D}
		=(-1)^r \mu_\A(V,\{0\}).$$
		Lemma \ref{prod_exp} says $\mu_\A(V,\{0\}) =(-1)^r E(\Phi)$, so $$\sum_{w\in W}\prod_{i=1}^r\frac{1}{1-\e^{-w\lambda_i}} = E(\Phi).$$
		Replacing $\lambda_i$ by $-\lambda_i$ (corresponding to an isomorphism in $\mathbb{Q}(\Lambda)$) completes the proof.
	\end{proof}
	
	We show the facts concerning the one-dimensional flats of Weyl arrangements that will be used in Section 6.
	\begin{lemma}\label{para_exp}
		Let $J = \Delta - \{\alpha_i\}$ for some $1\leq i \leq r$. We have
		$$\mu_{\mathcal{A}}(V, \mathbb{R}\lambda_i)=(-1)^{r-1} E(\Phi_J).$$
	\end{lemma}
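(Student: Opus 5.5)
The plan is to identify the flat $X=\mathbb R\lambda_i$ and reduce $\mu_\A(V,X)$ to the Möbius invariant of a smaller Weyl arrangement, to which Lemma \ref{prod_exp} applies. First I will check that $\mathbb R\lambda_i$ is a flat of $\A$. Since $(\lambda_i,\alpha_j)=0$ for all $j\neq i$, we have $\mathbb R\lambda_i=\bigcap_{j\neq i}H_{\alpha_j}$, and the $r-1$ simple roots in $J=\Delta-\{\alpha_i\}$ are linearly independent. Hence $X:=\mathbb R\lambda_i\in L(\A)$ with $\dim X=1$.

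The interval $[V,X]$ in $L(\A)$ is, by definition, the intersection poset $L(\A_X)$ of the localization $\A_X=\{H_\gamma: X\subset H_\gamma\}$. It follows that $\mu_\A(V,X)=\mu_{\A_X}(V,X)$, and the right-hand side is the Möbius value between the bottom and top elements of $L(\A_X)$. Next I will identify $\A_X$. The condition $X\subset H_\gamma$ means $(\lambda_i,\gamma)=0$. Writing $\gamma=\sum_j c_j\alpha_j$ with all $c_j$ of one sign, this says $c_i=0$, i.e.\ $\gamma\in\Phi_J$. So $\A_X$ consists of the hyperplanes $H_\gamma$ with $\gamma\in\Phi_J$.

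To handle non-essentiality, I will pass to the quotient $V/X$, or equivalently intersect with $U:=\operatorname{Span}_{\mathbb R}J=X^\perp$. Every hyperplane of $\A_X$ contains $X$, so $H\mapsto H\cap U$ gives an isomorphism of $L(\A_X)$ with the intersection poset of the arrangement $\{H_\gamma\cap U:\gamma\in\Phi_J\}$ in $U$. This poset isomorphism sends $V\mapsto U$ and $X\mapsto\{0\}$. The arrangement in $U$ is exactly the Weyl arrangement of the root system $\Phi_J$, which has rank $r-1$ in $U$ and is essential. Moreover $\Phi_J$ is a crystallographic root system, possibly reducible, and its Weyl group is $W_J$.

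Applying Lemma \ref{prod_exp} to $\Phi_J$ then gives $\mu=(-1)^{r-1}E(\Phi_J)$. Two points need care here.
\begin{itemize}
\item For reducible $\Phi_J$, the exponents of $W_J$ are the union of the exponents of its irreducible factors. Lemma \ref{prod_exp} as cited holds for any root system, since a product arrangement has characteristic polynomial equal to the product of the factors' polynomials, so this case is covered.
\item When $r=1$, $J$ is empty. Then $E(\Phi_J)=1$ (empty product) and $\mu_\A(V,V)=1$, which agrees with the formula.
\end{itemize}

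I do not expect a real obstacle. The only step needing attention is the verification that $\A_X$ is precisely the Weyl arrangement of $\Phi_J$, i.e.\ that a root orthogonal to $\lambda_i$ lies in the span of $J$. This is handled by the sign-coherence of root coordinates noted above.
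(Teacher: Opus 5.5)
Your proposal is correct and follows the paper's proof essentially verbatim: localize at $X=\mathbb{R}\lambda_i$, identify $\A_X$ with $\{H_\gamma:\gamma\in\Phi_J\}$, transport $L(\A_X)$ isomorphically via $Y\mapsto Y\cap\operatorname{Span}_{\mathbb R}J$ onto the intersection poset of the Weyl arrangement of $\Phi_J$, and apply Lemma~\ref{prod_exp}. Your extra checks are details the paper leaves implicit: that $X$ is a flat, that the identification of $\A_X$ holds, the reducible case, and the case $r=1$.
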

	\begin{proof}
		Let $X = \mathbb{R}\lambda_i$, $\A_X$ be the localized arrangement: $\A_X=\{H_\gamma : \gamma\in\Phi_J\}$. Let $V_J$ be the $\mathbb{R}$-span of $\Delta - \{\alpha_i\}$. Also let $\mathcal{B} = \{H_\gamma\cap V_J : \gamma\in\Phi_J\}$, which is a hyperplane arrangement of $V_J$. Then it is easy to see
		$$L(\mathcal{A}_X) \to L(\mathcal{B}),\qquad Y \mapsto Y\cap V_J$$
		is a bijection and it preserves the partial order on $L(\A_X)$ and $L(\mathcal{B})$, \textit{viz.} it is a poset isomorphism. Thus they have the same M\"obius function: $\mu_\A(V,X) = \mu_\mathcal{B}(V_J, \{0\})$. Also, $\mathcal{B}$ is the arrangement associated to root system $\Phi_J \subset V_J$. Lemma \ref{prod_exp} applied to $\Phi_J$ completes the proof.
	\end{proof}
	
	\begin{lemma}\label{aux_lemma}
		(a) Every one-dimensional flat of $\A$ is of the form $$\mathbb{R}w\lambda_i, \qquad w\in W, \ 1\leq i\leq r.$$
		(b) The stabilizer of $\lambda_i$ in $W$ is the parabolic subgroup $W_{\Delta-\{\alpha_i\}}$.
	\end{lemma}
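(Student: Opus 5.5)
For part (a), I will use the standard fact that every flat of the Weyl arrangement is $W$-conjugate to a flat meeting the closure of the fundamental chamber $\overline{C}=\sum_i\mathbb{R}_{\ge0}\lambda_i$ in a face. Concretely, let $X$ be a one-dimensional flat and pick a nonzero $x\in X$. Since $W$ acts simply transitively on chambers, their closures cover $V$, and $\overline{C}$ is a fundamental domain for the $W$-action, there is $w\in W$ with $w^{-1}x\in\overline{C}$. Write $w^{-1}x=\sum_i c_i\lambda_i$ with $c_i\ge0$.

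The set of roots vanishing on $w^{-1}x$ is $\{\gamma:(w^{-1}x,\gamma)=0\}$. Because $w^{-1}x$ is dominant, $(w^{-1}x,\alpha_j)=c_j(\lambda_j,\alpha_j)$. So the set of simple roots vanishing on it is $J=\{\alpha_j:c_j=0\}$. The roots vanishing on it are exactly those in $\Phi_J$: a positive root $\gamma=\sum n_j\alpha_j$ satisfies $(w^{-1}x,\gamma)=\sum n_jc_j(\lambda_j,\alpha_j)$, which is a sum of nonnegative terms, so it is zero iff $\gamma$ is supported on $J$.

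The smallest flat containing $w^{-1}x$ is therefore $\bigcap_{\gamma\in\Phi_J}H_\gamma$. This equals the orthogonal complement of $\operatorname{span}J$, namely $\operatorname{span}\{\lambda_j:\alpha_j\notin J\}$. Since $w^{-1}X$ is a flat containing $w^{-1}x$ and $\dim w^{-1}X=1$, it must equal this minimal flat. Hence exactly one $c_i$ is nonzero, and $w^{-1}X=\mathbb{R}\lambda_i$. Applying $w$ gives $X=\mathbb{R}w\lambda_i$, proving (a). The main subtlety in this part is the minimality argument; the key input is that flats are intersections of hyperplanes, so $X$ contains the minimal flat through $x$, and equality follows by dimension.

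For part (b), I will invoke the classical theorem (Humphreys, \S1.12 / Bourbaki) that the stabilizer of a point $\mu$ in $\overline{C}$ is generated by the simple reflections $s_j$ fixing $\mu$. For $\mu=\lambda_i$, we have $s_j\lambda_i=\lambda_i-\delta_{ij}\alpha_j$. So $s_j$ fixes $\lambda_i$ iff $j\ne i$, and the stabilizer is $W_{\Delta-\{\alpha_i\}}$.

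If a self-contained argument is preferred, I would proceed by induction on length. Suppose $w\lambda_i=\lambda_i$ and $\ell(w)>0$. Pick a simple $\alpha_j$ with $w\alpha_j<0$. Then $(\lambda_i,\alpha_j)=(w\lambda_i,w\alpha_j)=(\lambda_i,w\alpha_j)\le0$, while $(\lambda_i,\alpha_j)\ge0$. This forces $(\lambda_i,\alpha_j)=0$, i.e. $j\ne i$. Then $ws_j$ also fixes $\lambda_i$ and is shorter, which completes the induction.
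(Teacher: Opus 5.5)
Your proposal is correct and follows essentially the same route as the paper: for (a) you conjugate a point of the flat into the closed fundamental chamber $\overline{C}$, which is the content of the paper's ``boundary lines of $\overline{wC}$''; for (b) you invoke Humphreys' Theorem~1.12. Your version is more complete in two respects. First, you verify explicitly that the minimal flat through a dominant point $\sum_j c_j\lambda_j$ is $\operatorname{span}\{\lambda_j : c_j\neq 0\}$. Second, you use the simple-reflection form of Theorem~1.12, backed by a self-contained length induction, which avoids the paper's implicit step of identifying the reflections fixing $\lambda_i$ with those of $\Phi_{\Delta-\{\alpha_i\}}$.
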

	\begin{proof}
		(a) Consider the set $C = \mathbb{R}_{>0} \lambda_1 + \cdots + \mathbb{R}_{>0} \lambda_r$. Since Weyl chambers are fundamental domains of $W$ acting on $V$, we have
		$$\bigsqcup_{w\in W} wC = V - \bigcup_{\gamma\in \Phi^+} H_\gamma.$$
		Hence the one-dimensional flats, as intersection of some of these $H_\gamma$, correspond to boundary lines of $\overline{wC}$.\\
		(b) According to \cite[Theorem~1.12]{humphreys1992reflection}, the stabilizer of $\lambda_i$ in $W$ is generated by the reflection $$s_\gamma(x) = x - (x,\gamma^\vee)\gamma,\qquad \gamma\in \Phi$$ contained it. This is exactly $W_{\Delta-\{\alpha_i\}}$.
	\end{proof}
	
	\section{Multiple Mellin transforms at nonpositive integers}
	
	This section develops the analytic input used in the subsequent study of conical zeta functions. Although our main applications concern the origin, the general Mellin-transform argument naturally treats all nonpositive integers at once. We therefore derive formulas for the values and first derivatives of a class of normalized multiple Mellin transforms at $s=-n$ ($n\in\mathbb Z_{\geq0}$).
	
	\subsection{Admissible functions}
	Let $N\in\mathbb{N}$ and let $G:(0,\infty)^N\to\mathbb{C}$.
	We write $G\in\mathcal{R}_N$ if there exist real linear forms $l_1,\ldots,l_k$
	non-vanishing on $(0,\infty)^N$ such that
	$H:=(l_1\cdots l_k)G$ satisfies the following conditions:
	\begin{enumerate}
		\item[(A1)] $H$ extends holomorphically to a complex neighborhood of $[0,\infty)^N$.
		\item[(A2)] For every $A\ge0$ and every $\boldsymbol{\alpha}=(\alpha_1,\ldots,\alpha_N)\in\mathbb{Z}_{\ge0}^N$,
		\[
		\sup_{\y\in[0,\infty)^N}
		(1+\|\y\|)^A\bigl|\partial_{\y}^{\boldsymbol{\alpha}} H(\y)\bigr|<\infty,
		\]
	\end{enumerate}
	where $\|\y\|$ is the Euclidean norm of $\y$ and $\partial_{\y}^{\boldsymbol{\alpha}}=\partial_{y_1}^{\alpha_1}\cdots \partial_{y_N}^{\alpha_N}$.
	
	\begin{example} \label{ex:R-functions}
		Let $\boldsymbol{x}\cdot\y$ denote the standard Euclidean inner product of $\boldsymbol{x}, \y \in \mathbb{R}^N$.
		Suppose that $G$ is a finite linear combination of functions of the form
		\[
		\dfrac{e^{-\boldsymbol{b}\cdot \y}P(e^{-\boldsymbol{c}_1\cdot \y},\ldots,e^{-\boldsymbol{c}_q\cdot \y})}
		{\prod_{\nu=1}^M(1-e^{-\boldsymbol{a}_\nu\cdot \y})},
		\]
		where
		$\boldsymbol{b}\in\mathbb{R}_{>0}^N$,
		$\boldsymbol{a}_\nu\in\mathbb{R}_{\ge0}^N\setminus\{0\}$,
		$\boldsymbol{c}_\mu\in\mathbb{R}_{\ge0}^N$,
		and $P$ is a polynomial.
		Then $G\in\mathcal{R}_N$.
		Indeed, we may choose $l_\nu(\y)=\boldsymbol{a}_\nu\cdot \y$ ($1\le \nu\le M$).
	\end{example}
	
	Set $X_N=(0,\infty)\times(0,1)^{N-1}$.
	For a function $h:X_N\to\mathbb C$, consider the following conditions:
	\begin{enumerate}
		\item[(B1)] $h$ extends holomorphically to a complex neighborhood of
		$[0,\infty)\times[0,1]^{N-1}$.
		\item[(B2)] For every $A\ge0$ and every
		$\boldsymbol{\beta}=(\beta_2,\ldots,\beta_N)\in\mathbb{Z}_{\ge0}^{N-1}$,
		\begin{equation}\label{eq:h-sigma-decay}
			\sup_{\substack{z_1\ge1\\0<z_2,\ldots,z_N<1}}
			z_1^A
			\left|\partial_{z_2}^{\beta_2}\cdots\partial_{z_N}^{\beta_N}h(\z)\right|<\infty.
		\end{equation}
	\end{enumerate}
	When $N=1$, \textup{(B2)} means simply
	$\sup_{z_1\ge1}z_1^A|h(z_1)|<\infty$ for every $A\ge0$.
	
	Define
	\begin{equation}\label{eq:pi-def}
		\pi(\z)=(y_1,\ldots,y_N)=(z_1,z_1z_2,\ldots,z_1\cdots z_N).
	\end{equation}
	For $\sigma\in \mathfrak S_N$, the symmetric group on $N$ elements, and any $N$-variable function $F$,
	we define
	\begin{equation}\label{eq:sym-group-action}F^\sigma(y_1,\ldots,y_N):=F(y_{\sigma(1)},\ldots,y_{\sigma(N)}).\end{equation}
	
	\begin{lemma}\label{lem:G-pullback}
		Let $G\in\mathcal{R}_N$. For every $\sigma\in \mathfrak S_N$,
		there exists $\m_\sigma=(m_{\sigma,1},\ldots,m_{\sigma,N})\in\mathbb{Z}_{\ge0}^N$ such that
		\[
		h_\sigma:X_N\to\mathbb{C},
		\qquad h_\sigma(\z):=\z^{\m_\sigma}G^\sigma(\pi(\z))
		\]
		satisfies \textup{(B1)} and \textup{(B2)}, where $\z^{\m_\sigma}=\prod_{i=1}^N z_i^{m_{\sigma,i}}$.
	\end{lemma}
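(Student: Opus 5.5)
Since $G^\sigma\in\mathcal R_N$ whenever $G\in\mathcal R_N$ (permute the variables of the linear forms $l_j$ and of $H$), it suffices to treat $\sigma=\mathrm{id}$ and $G^\sigma=G$. Write $G=H/(l_1\cdots l_k)$ with $H$ satisfying (A1), (A2) and each $l_j$ a real linear form non-vanishing on $(0,\infty)^N$. A linear form non-vanishing on the open orthant has all coefficients of one sign. Indeed, otherwise it changes sign on $(0,\infty)^N$ by connectedness and the intermediate value theorem. Replacing $l_j$ by $-l_j$, we may therefore assume $l_j(\y)=\sum_i a_{ji}y_i$ with $a_{ji}\ge0$, not all zero.

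The key step is to factor each $l_j\circ\pi$. We have
$$l_j(\pi(\z))=\sum_{i=1}^N a_{ji}z_1\cdots z_i .$$
Let $i_j$ be the smallest index with $a_{ji}\ne0$. Then
$$l_j(\pi(\z))=z_1\cdots z_{i_j}\,\tilde l_j(\z),\qquad \tilde l_j(\z)=a_{j i_j}+\sum_{i>i_j}a_{ji}z_{i_j+1}\cdots z_i .$$
The polynomial $\tilde l_j$ satisfies $\tilde l_j\ge a_{ji_j}>0$ on $[0,\infty)\times[0,1]^{N-1}$, since all terms are nonnegative there. It is also bounded on that set, because it does not involve $z_1$ once $i_j\ge1$, and the remaining variables lie in $[0,1]$. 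By continuity, $\tilde l_j$ has no zeros on a sufficiently thin complex neighborhood of this set. This neighborhood must be chosen uniformly in $z_1$. That is possible because $\tilde l_j$ is independent of $z_1$ and $[0,1]^{N-1}$ is compact.

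Define $m_i:=\#\{j: i_j\ge i\}$. Then $\z^{\m}\prod_j (l_j\circ\pi)^{-1}=\prod_j\tilde l_j^{-1}$, and hence
$$h(\z)=\z^{\m}G(\pi(\z))=\frac{H(\pi(\z))}{\prod_j\tilde l_j(\z)}.$$

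Property (B1) follows from this formula. The map $\pi$ is polynomial and maps a small complex neighborhood of $[0,\infty)\times[0,1]^{N-1}$ into the complex neighborhood of $[0,\infty)^N$ on which $H$ is holomorphic. This is the main obstacle, since the neighborhood in (A1) may shrink as $\|\y\|\to\infty$. I would handle it by using a neighborhood of the form $\{|\Im z_1|<\epsilon(\Re z_1)\}$ with $\epsilon$ decreasing, since (B1) only asks for some neighborhood. The denominator is holomorphic and nonvanishing by the previous step.

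For (B2), the partial derivatives $\partial^{\boldsymbol\beta}_{z_2,\ldots,z_N}$ of $H\circ\pi$ are, by the chain rule, finite sums of terms $\partial^{\boldsymbol\alpha}H(\pi(\z))$ times polynomials in $\z$. Only $z_1$ is unbounded in these polynomials, so on the region $z_1\ge1$, $z_i\in(0,1)$ the coefficients are $O(z_1^{|\boldsymbol\beta|})$. Derivatives of $1/\prod\tilde l_j$ are bounded there, because $\tilde l_j\ge a_{ji_j}$ and $\tilde l_j$ is a polynomial in the bounded variables. Since $\|\pi(\z)\|\ge y_1=z_1$, condition (A2) gives $|\partial^{\boldsymbol\alpha}H(\pi(\z))|\ll_A(1+z_1)^{-A-|\boldsymbol\beta|}$. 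Combining these bounds yields (B2).
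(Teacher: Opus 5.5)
Your proof is correct and follows the paper's argument. You normalize the signs of the $l_j$, factor $l_j\circ\pi=z_1\cdots z_{i_j}\tilde l_j$ with $\tilde l_j$ independent of $z_1$ and bounded below by $a_{j i_j}>0$, take $m_i=\#\{j:i_j\ge i\}$, and obtain (B2) from the chain rule together with $z_1\le\|\pi(\z)\|$ and (A2).

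The one step to tidy is (B1), where no shrinking-width construction is needed: $\pi^{-1}(U)$ is already an open set containing $[0,\infty)\times[0,1]^{N-1}$, by continuity of $\pi$. Your proposed neighborhood, which shrinks only $\operatorname{Im} z_1$, would not suffice as written, because perturbations of $z_2,\ldots,z_N$ get multiplied by $z_1$.
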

	
	\begin{proof}
		In the case $N=1$ the assertion immediately follows from \textup{(A1)} and \textup{(A2)}.
		
		Assume that $N\ge 2$.
		Fix $\sigma\in\mathfrak S_N$.
		Replacing any $l_j$ by $-l_j$ if necessary,
		we may assume that all coefficients of every $l_j$ are nonnegative.
		Write
		\begin{equation*}
			l_j^\sigma(\y)
			=\sum_{k=1}^N a_{j,k}y_k,
			\qquad a_{j,k}\ge0,
		\end{equation*}
		and let $r_j$ be the least index such that $a_{j,r_j}>0$.
		Then
		\[
		l_j^\sigma(\pi(\z))
		=z_1\cdots z_{r_j}U_j(z_{r_j+1},\ldots,z_N),
		\]
		where
		\[
		U_j=U_j(z_{r_j+1},\ldots,z_N)
		:=a_{j,r_j}+a_{j,r_j+1}z_{r_j+1}+\cdots+a_{j,N}z_{r_j+1}\cdots z_N.
		\]
		Set
		$m_{\sigma,i}:=\#\{j:r_j\ge i\}$ ($1\le i\le N$).
		Since $H^\sigma=(l_1^\sigma\cdots l_k^\sigma)G^\sigma$,
		we have
		\begin{equation}\label{eq:h-factorization}
			h_\sigma(\z)
			=\dfrac{H^\sigma(\pi(\z))}{\prod_{j=1}^kU_j}.
		\end{equation}
		
		By \textup{(A1)}, $H^\sigma$ is holomorphic on a complex neighborhood $U$ of $[0,\infty)^N$,
		so $H^\sigma\circ\pi$ is holomorphic on $\pi^{-1}(U)$,
		which contains $[0,\infty)\times[0,1]^{N-1}$.
		Moreover, $U_j\ge a_{j,r_j}>0$ on $[0,1]^{N-1}$.
		Hence $\prod_jU_j$ is nonzero on a complex neighborhood of $[0,1]^{N-1}$,
		and \eqref{eq:h-factorization} proves \textup{(B1)}.
		
		For \textup{(B2)}, put $Q(\z)=\prod_{j=1}^kU_j^{-1}$.
		Since $Q$ is independent of $z_1$,
		all its derivatives in $z_2,\ldots,z_N$ are bounded on $[0,\infty)\times[0,1]^{N-1}$.
		By the chain rule, for every
		$\boldsymbol{\beta}\in\mathbb Z_{\ge0}^{N-1}$ there is a constant
		$C_{\boldsymbol{\beta}}>0$ such that,
		for $z_1\ge1$ and $0<z_2,\ldots,z_N<1$,
		\[
		\left|\partial_{z_2}^{\beta_2}\cdots\partial_{z_N}^{\beta_N}
		h_\sigma(\z)\right|
		\le
		C_{\boldsymbol{\beta}}
		\sum_{|\boldsymbol{\alpha}|\le|\boldsymbol{\beta}|}
		z_1^{|\boldsymbol{\alpha}|}
		\left|(\partial_{\y}^{\boldsymbol{\alpha}}H^\sigma)(\pi(\z))\right|,
		\]
		where $|\boldsymbol{\alpha}|=\alpha_1+\cdots+\alpha_N$ for $\boldsymbol{\alpha}=(\alpha_1,\ldots,\alpha_N)$.
		Since $z_1\le \|\pi(\z)\|$, condition \textup{(A2)} gives \eqref{eq:h-sigma-decay}.
	\end{proof}
	
	\subsection{Hadamard finite parts and Laurent coefficients}
	
	Let $R\in \mathbb{R}_{>0}\cup \{\infty\}$.
	Let $\rho:(0,R]\to\mathbb{C}$ be Lebesgue integrable on $[\delta,R]$
	for every $0<\delta<R$ and extend meromorphically to a complex neighborhood of $0$.
	Then the cutoff integral has an expansion of the form
	$$\int_\varepsilon^R\rho(x)\,dx
	=\sum_{q=1}^M c_q\varepsilon^{-q}+c_{\log}\log\varepsilon+C+O(\varepsilon)
	\qquad(\varepsilon\downarrow0).$$
	The constant $C$ is the Hadamard finite part \cite{Hadamard}, denoted by
	$$\FP\int_0^R\rho(x)\,dx.$$
	For example, for $\delta >0, n\in \mathbb{Z}$, we have
	\begin{equation}\label{FP_simple_ex}\FP \int_0^\delta x^n \, dx = \begin{cases}\frac{\delta^{1+n}}{1+n} & n\neq -1\\ \log \delta & n=-1\end{cases}.\end{equation}
	
	Let $\omega=f(\z)\,dz_1\cdots dz_N$ be an $N$-form on $X_N$.
	We call $\omega$ \emph{admissible}
	if there is $\m=(m_1,\ldots,m_N)\in\mathbb{Z}_{\ge0}^N$ such that
	$h(\z):=\z^{\m} f(\z)$ satisfies \textup{(B1)} and \textup{(B2)}.
	
	Fix $k_1,\ldots,k_N\in\mathbb{N}$.
	For an admissible $N$-form $\omega$ on $X_N$, define
	$$M_\omega(s):=\int_{X_N}\prod_{j=1}^Nz_j^{k_js}\,\omega,
	\qquad \Re(s)\gg 0.$$

	Let $\omega$ be an admissible $N$-form on $X_N$.
	Let $\Res{\z=0}\omega$ denote the coefficient of $z_1^{-1}\cdots z_N^{-1}$
	in the Laurent expansion of the scalar coefficient of $\omega$,
	and define partial residues similarly.
	For $1\le j\le N$, set
	$$\rho_j(z_j)\,dz_j:=\Res{z_i=0\ (i\ne j)}\omega,$$
	where the reside is taken with respect to $\z$-variables except $z_j$.
	Let $R_1=\infty$ and $R_j=1$ ($2\le j\le N$).
	Since $\omega$ is admissible,
	each $\rho_j$ is meromorphic at $0$ and integrable on $[\delta,R_j]$ for every $0<\delta<R_j$;
	when $j=1$, condition \textup{(B2)} also gives rapid decay at $+\infty$.
	Thus all finite parts below are well defined.
	
	The next lemma gives a meromorphic continuation of $M_\omega(s)$ and
	identifies its two leading Laurent coefficients at $s=0$ in terms of residues and Hadamard finite parts.
	Such relations in one variable are classical; see, for example, \cite{monegato1998euler} or Corollary \ref{corollary_HDF_1dim} below. 
	
	\begin{lemma}\label{lem:first-two-coefficients}
		The integral $M_\omega(s)$  is absolutely convergent for $\Re(s)\gg0$ and admits a meromorphic continuation to $\mathbb{C}$.
		Set $K_N:=\prod_{j=1}^Nk_j$.
		Then, at $s=0$,
		\begin{equation}\label{eq:first-two-coefficients}
			M_\omega(s)
			=\dfrac{\Res{\z=0}\omega}{K_Ns^N}
			+\dfrac{1}{s^{N-1}}\sum_{j=1}^N
			\dfrac{1}{\prod_{i\ne j}k_i}
			\FP\int_0^{R_j}\rho_j(z_j)\,dz_j
			+O(s^{-N+2}).
		\end{equation}
	\end{lemma}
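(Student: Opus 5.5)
Write $\omega = \z^{-\m} h(\z)\,dz_1\cdots dz_N$ with $h$ satisfying (B1) and (B2). We want the meromorphic continuation of $M_\omega(s)=\int_{X_N}\prod_j z_j^{k_js-m_j}h(\z)\,d\z$ and its two leading Laurent coefficients at $s=0$.

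The plan is to split each variable's range at a cut point and subtract a Taylor polynomial near $0$. This is the standard one-variable Mellin argument, iterated and made uniform in the remaining variables. For $j\ge 2$ we use $(0,1)$ directly. For $z_1$ we split $(0,\infty)=(0,1]\cup[1,\infty)$; on $[1,\infty)$, condition (B2) with $\boldsymbol\beta=0$ gives an entire function of $s$, uniformly in $z_2,\ldots,z_N$ (note $z_j^{k_js-m_j}$ is integrable near $0$ in $z_j$ only for $\Re s$ large, so the other variables still need treatment).

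For each $j$ we define the Taylor operator $T_j$ that takes the Taylor polynomial of $h$ in $z_j$ up to order $m_j-1$ at $z_j=0$; by (B1) this is available on a neighborhood of $[0,1]$. We write $h=\prod_j\bigl(T_j+(1-T_j)\bigr)h$ and expand into $2^N$ pieces indexed by the subset $S$ of variables where $T_j$ is taken. On the remainder factor $(1-T_j)$, the term $z_j^{k_js-m_j}(1-T_j)h$ is $O(z_j^{k_j\Re s})$ times a holomorphic function. So that integral converges for $\Re s>-1/k_j$ and is holomorphic near $s=0$, with value at $s=0$ equal to the integral of $z_j^{-m_j}(1-T_j)h$ over the variable's range; the $[1,\infty)$ tail for $z_1$ is kept un-subtracted. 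On the $T_j$ factor, integrating the monomials $z_j^{k_js-m_j+a}$ over $(0,1)$ gives $1/(k_js-m_j+a+1)$. This is holomorphic at $s=0$ unless $a=m_j-1$, in which case it equals $1/(k_js)$ times the coefficient of $z_j^{m_j-1}$. Together this gives meromorphic continuation to a neighborhood of $s=0$. Continuation to all of $\mathbb C$ follows by taking Taylor polynomials to arbitrary order, which (B1) permits; only $s$ near $0$ matters for \eqref{eq:first-two-coefficients}.

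Then we read off the coefficients. Each piece indexed by $S$ is $\prod_{j\in S}(k_js)^{-1}$ times a function holomorphic at $0$, up to terms holomorphic at $0$. So only $|S|\ge N-1$ contributes to orders $s^{-N}$ and $s^{-N+1}$.

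\begin{itemize}
\item \emph{The piece $|S|=N$.} Taking the coefficient of $z_j^{m_j-1}$ in every variable gives exactly $\Res{\z=0}\omega/(K_Ns^N)$. The non-resonant Taylor terms contribute constants times $1/(k_js-m_j+a+1)$, which affect the $s^{-N+1}$ coefficient through their values at $s=0$. These values are $\FP\int_0^1 z_j^{a-m_j}\,dz_j$, by \eqref{FP_simple_ex}.
\item \emph{The pieces $|S|=N-1$, $S=\{i\neq j\}$.} At $s=0$ this gives $s^{-(N-1)}\prod_{i\ne j}k_i^{-1}$ times $\int (1-T_j)$ applied to $\rho_j$, plus the tail on $[1,\infty)$ if $j=1$. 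Here we use that taking residues in the variables $i\neq j$ commutes with $(1-T_j)$ and with integration in $z_j$.
\end{itemize}

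Combining the non-resonant Taylor part in $z_j$ from the first item with the remainder part from the second, the definition of the Hadamard finite part gives precisely $\FP\int_0^{R_j}\rho_j(z_j)\,dz_j$. Indeed, $\FP\int_0^{R_j}\rho_j=\int_0^{R_j}(\rho_j-\text{polar part})+\FP\int_0^1(\text{polar part})$, where for $j=1$ the polar subtraction is only on $(0,1)$; the residue term $z_j^{-1}$ yields $\log 1=0$. Summing over $j$ gives \eqref{eq:first-two-coefficients}.

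The main obstacle is the bookkeeping in the $s^{-N+1}$ coefficient. There are cross terms where the expansion of $1/(k_js-m_j+a+1)$ at $s=0$ for a resonant variable multiplies residues in the others. We must check that these appear only at order $s^{-N+2}$, or else that they are exactly absorbed into the finite-part terms. We would first verify the argument carefully for $N=2$, then induct on $N$ by integrating out $z_N$ via the one-variable statement, applied uniformly in the remaining variables.
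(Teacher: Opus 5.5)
Your argument is correct, but it is organized differently from the paper's. The paper cuts every variable at a small $\delta$ and splits the \emph{domain} into the $2^N$ boxes $X_T(\delta)$. On the corner box $(0,\delta)^N$ it expands $f$ in its full convergent Laurent series, which is why $\delta$ may have to be shrunk. It must then keep track of the factors $\delta^{k_is}$: the all-resonant term $\prod_i\delta^{k_is}/(k_is)$ contributes the residue times $\log\delta$ at order $s^{-N+1}$. That term is what makes the corner contribution equal $\FP\int_0^\delta\rho_j$, which is then glued to $\int_\delta^{R_j}\rho_j$ from the adjacent boxes.

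You instead split the \emph{integrand}, subtracting only the finitely many polar terms in each variable, and you cut only $z_1$, at $1$. This needs no convergence of a Laurent series on a box. Since $1^{k_js}=1$, the resonant factors are exactly $(k_js)^{-1}$, so the ``main obstacle'' in your last paragraph does not arise. Any correction to the $s^{-N}$ and $s^{-N+1}$ coefficients would have to come from expanding a non-resonant factor $1/(k_js-m_j+a+1)$, or a holomorphic remainder integral, beyond its value at $s=0$. That costs a factor of $s$ and lands in $O(s^{-N+2})$. So the bookkeeping in your two bullet points is already complete, and no induction on $N$ is needed. What the paper's version buys in return is that the two halves of the finite part come from geometrically distinct regions, with independence of $\delta$ as a built-in check.

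One detail to tighten: on the tail $z_1\ge1$ you also subtract Taylor polynomials in $z_2,\dots,z_N$, and the resulting coefficients and remainders need rapid decay in $z_1$. This uses (B2) with $\boldsymbol\beta\neq0$, not only $\boldsymbol\beta=0$, because the complex neighbourhood in (B1) need not have uniform size as $z_1\to\infty$. The paper makes the same remark.
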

	
	\begin{proof}
		Absolute convergence for $\Re(s)\gg0$ follows directly from admissibility.
		
		Fix $\delta\in(0,1)$ and put $[N]:=\{1,2,\ldots,N\}$.
		Up to sets of measure zero, decompose $X_N$ into
		$$X_T(\delta):=
		\left\{\z\in X_N:
		\begin{array}{ll}
			0<z_i<\delta& \text{if $i\in T$},\\
			\delta<z_i<R_i& \text{if $i\notin T$}
		\end{array}\right\},
		\qquad T\subset[N],$$
		and let $M_T(s)$ denote the corresponding integral. Then $$M_\omega(s) = \sum_{T\subset [N]} M_T(s).$$
		
		We first continue $M_T$ meromorphically.
		Let $K\subset\mathbb{C}$ be compact, and choose
		$d\in\mathbb R$ with $d<\min_{s\in K}\Re(s)$.
		For each $i\in T$, choose $M_i\ge1$ such that $k_i d+M_i>-1$.
		Since $f(\z)=\z^{-\m}h(\z)$, Taylor expansion of $h$ in $z_i$ gives
		$$f(\z)=\sum_{q=-m_i}^{M_i-1}c_{i,q}(\z_{-i})z_i^q
		+z_i^{M_i}r_i(\z;M_i),$$
		where $\z_{-i}:=(z_1,\ldots,z_{i-1},z_{i+1},\ldots,z_N)$.
		After multiplication by $\prod_{j\ne i}z_j^{m_j}$,
		the coefficient functions and remainder are holomorphic in the remaining variables.
		Hence the expansion can be repeated in the remaining variables of $T$.
		If $1\notin T$, only derivatives in $z_2,\ldots,z_N$ occur in this process,
		so these functions also inherit from \textup{(B2)} rapid decay as $z_1\to\infty$,
		uniformly in the bounded variables.
		
		Apply these expansions successively in all variables $z_i$ with $i\in T$ to every coefficient
		and remainder obtained at the preceding steps. Each resulting term has the form
		\begin{equation}
			\label{eq:iterated-Laurent-term}
			\left(\prod_{i\in U}z_i^{q_i}\right)
			\left(\prod_{j\in T\setminus U}z_j^{M_j}\right)
			g_{T,U,\boldsymbol{q}}
			\bigl((z_k)_{k\in[N]\setminus U}\bigr),
		\end{equation}
		where $U\subset T$ and $q_i\in\{-m_i,\ldots,M_i-1\}$ for $i\in U$.
		The function $g_{T,U,\boldsymbol{q}}$ inherits the similar properties above.
		After integrating the variables in $T$ when $\Re(s)\gg 0$,
		each term in \eqref{eq:iterated-Laurent-term}
		contributes
		$$\dfrac{F_{T,U,\boldsymbol{q}}
			\bigl(s;(z_k)_{k\in[N]\setminus T}\bigr)}
		{\displaystyle\prod_{i\in U}(k_i s+q_i+1)},$$
		with numerator holomorphic in $s$ for $\Re(s)>d$.
		The remaining variables range over compact intervals,
		except that $z_1\in(\delta,\infty)$ when $1\notin T$;
		in that case rapid decay gives absolute and locally uniform convergence of the $z_1$-integral.
		Hence
		\begin{equation}
			\label{eq:MT-expression}
			M_T(s)=
			\sum_{U\subset T}
			\sum_{\boldsymbol{q}}
			\dfrac{F_{T,U,\boldsymbol{q}}(s)}
			{\displaystyle\prod_{i\in U}(k_i s+q_i+1)}
		\end{equation}
		for $\Re(s)\gg0$,
		where the right-hand side is meromorphic on $\Re(s)>d$.
		Thus this continues $M_T$ meromorphically to a neighborhood of $K$,
		and hence to $\mathbb{C}$.
		Summing over $T$ gives the continuation of $M_\omega$.
		
		Formula \eqref{eq:MT-expression} also shows that a pole at $s=0$ can arise only from factors with $q_i=-1$. Thus the pole order of $M_T$ at $s=0$ is at most $|T|$.
		Consequently, up to $O(s^{-N+2})$,
		only $T=[N]$ and $T=[N]\setminus\{j\}$ ($j\in [N]$) can contribute.
		
		We now compute these contributions more explicitly.
		Shrinking $\delta$ if necessary, $f$ has a Laurent expansion
		$$f(\z)=\sum_{\boldsymbol{\nu}:\nu_i\ge-m_i}
		a_{\boldsymbol{\nu}}\z^{\boldsymbol{\nu}}
		\qquad(0<|z_i|<\delta).$$
		This expansion gives
		\begin{equation}
			\label{eq:corner-series}
			M_{[N]}(s)=\sum_{\nu_i\ge-m_i}
			a_{\boldsymbol{\nu}}\prod_{i=1}^N
			\frac{\delta^{k_i s+\nu_i+1}}{k_i s+\nu_i+1},\qquad \Re(s)\gg 0.
		\end{equation}
		This shows that only terms for which at least $N-1$ of the $\nu_i$ are $-1$
		can contribute to $s^{-N}$ or $s^{-N+1}$.
		For each $j$, write
		$$\rho_j(z_j)=\sum_{q\ge-m_j}a_{j,q}z_j^q,\qquad
		a_{j,q}:=a_{(-1,\ldots,-1,q,-1,\ldots,-1)},$$
		where the $j$-th entry is $q$ and all the other entries are $-1$.
		Then $a_{j,-1}=\Res{\z=0}\omega$, and \eqref{eq:corner-series} gives
		\begin{equation} \label{eq:corner-expansion}
			M_{[N]}(s)
			=\dfrac{\Res{\z=0}\omega}{K_Ns^N}+\dfrac1{s^{N-1}}
			\sum_{j=1}^N\dfrac1{\prod_{i\ne j}k_i}
			\left(\Res{\z=0}\omega \log\delta+
			\sum_{\substack{q\ge-m_j\\q\ne-1}}
			a_{j,q}\dfrac{\delta^{q+1}}{q+1}
			\right)+O(s^{-N+2}).
		\end{equation}
		From equation \eqref{FP_simple_ex}, the expression in parentheses is exactly
		$\FP\int_0^\delta\rho_j(z_j)\,dz_j$.
		
		Fix $j$.
		On $X_{[N]\setminus\{j\}}(\delta)$,
		the only contribution of pole order $N-1$ is obtained
		by selecting the $z_i^{-1}$-coefficient for every $i\ne j$.
		By definition, the resulting coefficient is $\rho_j(z_j)$.
		Therefore
		\begin{align}
			\label{eq:adjacent}
			M_{[N]\setminus\{j\}}(s)
			&=\left(\prod_{i\ne j}\dfrac{\delta^{k_i s}}{k_i s}\right)
			\int_\delta^{R_j}z_j^{k_j s}\rho_j(z_j)\,dz_j
			+O(s^{-N+2})\\
			&=\dfrac1{s^{N-1}\prod_{i\ne j}k_i}
			\int_\delta^{R_j}\rho_j(z_j)\,dz_j
			+O(s^{-N+2}).\notag
		\end{align}
		
		Finally, summing \eqref{eq:corner-expansion} and \eqref{eq:adjacent} and using
		$$\FP\int_0^{R_j}\rho_j
		=\FP\int_0^\delta\rho_j+\int_\delta^{R_j}\rho_j$$
		gives \eqref{eq:first-two-coefficients}.
	\end{proof}
	
	\begin{corollary}\label{corollary_HDF_1dim}
		Let $N=1$ and let $\omega = f(z)\,dz$ be an admissible $1$-form on $X_1=(0,\infty)$. Then $$M_\omega(s) = \int_0^\infty f(z) z^s dz$$
		extends memormorphically to $\mathbb{C}$ and $$M_\omega(s)
		=\frac{\Res{z=0}\omega}{s}+\FP\int_0^\infty\omega+O(s).$$
	\end{corollary}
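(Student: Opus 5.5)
This is the case $N=1$ of Lemma \ref{lem:first-two-coefficients}, so the plan is to specialize that lemma and check that every object in it reduces to the one named in the corollary.

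Take $N=1$ and $k_1=1$, so that $K_N=1$ and $M_\omega(s)=\int_0^\infty z^s f(z)\,dz$. Here $X_1=(0,\infty)$ and $R_1=\infty$. The lemma already gives absolute convergence for $\Re(s)\gg0$ and meromorphic continuation to $\mathbb C$.

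Next, identify the coefficients in \eqref{eq:first-two-coefficients}.
\begin{itemize}
\item With $N=1$, the leading term is $\Res{z=0}\omega/s$.
\item The partial residue defining $\rho_1$ is taken over the empty set of variables, so $\rho_1(z)\,dz=\omega$ itself.
\item The empty product $\prod_{i\ne 1}k_i$ equals $1$.
\item The second term is therefore $s^{0}\,\FP\int_0^\infty f(z)\,dz=\FP\int_0^\infty\omega$.
\item The error term $O(s^{-N+2})$ becomes $O(s)$.
\end{itemize}

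The only point needing care is that the finite part is well defined on $(0,\infty)$. This holds because (B2) gives rapid decay at infinity, so only the behaviour at $0$ needs regularizing, as the paper notes before the lemma. I would include that remark for completeness.

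There is no genuine obstacle in this argument.
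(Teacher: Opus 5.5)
Your proposal is correct and is the paper's own argument: the paper proves this corollary in one line, as the special case $N=1$, $k_1=1$ of Lemma \ref{lem:first-two-coefficients}. Your term-by-term check is exactly the routine verification that this specialization involves: $\rho_1\,dz=\omega$, the empty product equals $1$, $O(s^{-N+2})=O(s)$, and the finite part on $(0,\infty)$ is well defined by (B2).
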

	\begin{proof}
		This is a special case of above lemma with $k_1 = 1$ and $N=1$.
	\end{proof}
	
	\subsection{Values and derivatives at nonpositive integers}
	Let $G\in \mathcal{R}_N$.
	Use the convention
	$$\dfrac{d\y}{\y}:=\prod_{i=1}^N \dfrac{dy_i}{y_i},
	\qquad \y^\alpha:=\prod_{i=1}^N y_i^{\alpha}$$
	for any $\alpha\in \mathbb{C}$.
	Recall the subsitution $\pi$ defined in equation \eqref{eq:pi-def}. For $\sigma\in \mathfrak S_N$ and $n\in\mathbb{Z}_{\ge 0}$,
	set
	$$\omega_{\sigma,n}:=\pi^*\left(G^\sigma(\y)\y^{-n-1}\,d\y\right).$$
	Lemma \ref{lem:G-pullback} shows that $\omega_{\sigma,n}$ is admissible.
	For $1\le j\le N$, define
	$$\rho_{\sigma,j,n}(z_j)\,dz_j
	:=\Res{z_i=0\ (i\ne j)}\omega_{\sigma,n},\qquad
	\mathcal{I}_{\sigma,j,n}
	:=\FP\int_0^{R_j}\rho_{\sigma,j,n}(z_j)\,dz_j,$$
	where $R_1=\infty$ and $R_j=1$ ($j\ge2$). 
	
	Let $$H_n:=\sum_{m=1}^n\dfrac1m,\qquad H_0:=0,$$
	and let $\gamma$ be Euler's constant.
	
	\begin{proposition}\label{prop:I-values}
		The integral
		$$I_G(s):=\dfrac{1}{\Gamma(s)^N}
		\int_{(0,\infty)^N}G(\y)\y^{s}\,\dfrac{d\y}{\y}$$
		is absolutely convergent for $\Re(s)\gg 0$ and admits a meromorphic continuation to $\mathbb{C}$.
		Moreover, for $n\in\mathbb{Z}_{\ge0}$, $I_G$ is holomorphic at $s=-n$ and
		\begin{align}
			I_G(-n)&=\dfrac{(-1)^{Nn}(n!)^N}{N!}
			\sum_{\sigma\in \mathfrak S_N}
			\Res{\z=0}\pi^*\left(
			G^\sigma(\y)\y^{-n-1}\,d\y
			\right),
			\label{eq:I-value}\\
			I_G'(-n)&=\dfrac{(-1)^{Nn}(n!)^N}{N!}
			\left(\sum_{\sigma\in \mathfrak S_N}\sum_{j=1}^N(N-j+1)\mathcal{I}_{\sigma,j,n}\right)
			-N(H_n-\gamma)I_G(-n).
			\label{eq:I-derivative}
		\end{align}
	\end{proposition}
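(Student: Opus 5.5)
The plan is to split $(0,\infty)^N$, up to measure zero, into the $N!$ regions $\{y_{\sigma(1)}>y_{\sigma(2)}>\cdots>y_{\sigma(N)}\}$ and to pull each region back under the map $\pi$ of \eqref{eq:pi-def}. After relabelling by $\sigma$, the region becomes $y_1>\cdots>y_N>0$, which is exactly $\pi(X_N)$. We have $\y^{s}\,d\y/\y = \y^{s+n}\,\y^{-n-1}d\y$, and under $\pi$,
$$\prod_i y_i^{s+n}=\prod_{j=1}^N z_j^{(N-j+1)(s+n)}.$$
Hence
$$\Gamma(s)^N I_G(s)=\sum_{\sigma}\int_{X_N}\prod_j z_j^{k_j(s+n)}\,\omega_{\sigma,n}=\sum_\sigma M_{\omega_{\sigma,n}}(s+n),\qquad k_j:=N-j+1 .$$
Lemma~\ref{lem:G-pullback} guarantees that each $\omega_{\sigma,n}$ is admissible. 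Lemma~\ref{lem:first-two-coefficients} then gives absolute convergence for $\Re(s)\gg0$ and meromorphic continuation of each $M_{\omega_{\sigma,n}}$. Since $1/\Gamma(s)^N$ is entire, $I_G$ continues meromorphically to $\mathbb C$.

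Next, expand around $s=-n$ with $t=s+n$. Note that $K_N=\prod k_j=N!$ and $\prod_{i\ne j}k_i=N!/(N-j+1)$. Lemma~\ref{lem:first-two-coefficients} gives
$$\sum_\sigma M_{\omega_{\sigma,n}}(t)=\frac{R}{N!\,t^N}+\frac{1}{N!\,t^{N-1}}\sum_\sigma\sum_j (N-j+1)\mathcal I_{\sigma,j,n}+O(t^{-N+2}),$$
where $R=\sum_\sigma\operatorname{Res}_{\z=0}\omega_{\sigma,n}$. On the other side, $1/\Gamma$ has a simple zero at $-n$:
$$\frac{1}{\Gamma(-n+t)}=(-1)^n n!\,t\,\bigl(1+(H_n-\gamma)t+O(t^2)\bigr).$$
This follows from $\Gamma(-n+t)=\Gamma(1+t)/\bigl(t(t-1)\cdots(t-n)\bigr)$ together with $\Gamma(1+t)^{-1}=1+\gamma t+\cdots$. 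Expanding $\prod_{m=1}^n(t-m)=(-1)^n n!\,(1-H_nt+\cdots)$ gives the coefficient $(-1)^n n!\,t\,(1-H_n t)(1+\gamma t)$. So the linear correction is actually $-(H_n-\gamma)$, and I will fix the sign carefully.

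Raising to the $N$th power gives
$$\Gamma(s)^{-N}=(-1)^{Nn}(n!)^N\,t^N\bigl(1-N(H_n-\gamma)t+O(t^2)\bigr).$$
Multiplying the two expansions shows that $I_G$ is holomorphic at $-n$, with constant term equal to \eqref{eq:I-value}. The $t^1$ coefficient is
$$\frac{(-1)^{Nn}(n!)^N}{N!}\sum_{\sigma,j}(N-j+1)\mathcal I_{\sigma,j,n}\;-\;N(H_n-\gamma)\,I_G(-n),$$
which is \eqref{eq:I-derivative}.

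There are two points needing care. The first is verifying that $\pi$ identifies the ordered region with $X_N$ and that the pullback of $d\y$ matches the form in the definition of $\omega_{\sigma,n}$; the Jacobian is absorbed into $\pi^*$. The second, the main obstacle, is confirming the sign of the digamma term $\psi(n+1)=H_n-\gamma$ in the expansion of $1/\Gamma$ near $-n$. That term enters through $1/\Gamma(-n+t)\propto t\,\Gamma(1+t)^{-1}\prod_m(t-m)$, with each factor's first-order contribution tracked separately as above.
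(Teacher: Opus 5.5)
Your proof is correct and follows the paper's argument: decompose $(0,\infty)^N$ into ordered sectors, pull back by $\pi$, apply Lemma~\ref{lem:first-two-coefficients} with $k_j=N-j+1$, and multiply by the expansion $\Gamma(s-n)^{-N}=(-1)^{Nn}(n!)^N t^N\bigl(1-N(H_n-\gamma)t+O(t^2)\bigr)$. The only blemish is the first displayed expansion of $1/\Gamma(-n+t)$: its first-order factor should read $1-(H_n-\gamma)t$, as your own computation immediately after that display shows, so delete the erroneous version.
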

	
	\begin{proof}
		Up to sets of measure zero, decompose $(0,\infty)^N$
		according to the ordering of the coordinates.
		After permuting coordinates, we obtain
		$$I_G(s)
		=\dfrac{1}{\Gamma(s)^N}
		\sum_{\sigma\in\mathfrak{S}_N}
		\int_{0<y_N<\cdots<y_1} G^\sigma(\y)\y^s\dfrac{d\y}{\y}.$$
		Under change of variables $\y=\pi(\z)$,
		\begin{equation}\label{eq:I-sector-decomposition}
			I_G(s)=\dfrac{1}{\Gamma(s)^N} \sum_{\sigma\in\mathfrak{S}_N}
			\int_{X_N} \pi^*\left(G^\sigma(\y)\y^s\dfrac{d\y}{\y}\right).
		\end{equation}
		Lemma \ref{lem:first-two-coefficients} gives
		the convergence and the meromorphic continuation of $I_G$.
		
		Fix $n\in\mathbb{Z}_{\ge0}$. From \eqref{eq:I-sector-decomposition},
		$$I_G(s-n)=\dfrac{1}{\Gamma(s-n)^N}
		\sum_{\sigma\in\mathfrak{S}_N}\int_{X_N} \prod_{j=1}^N z_j^{(N-j+1)s} \omega_{\sigma,n}.$$
		Applying Lemma \ref{lem:first-two-coefficients} with $k_j=N-j+1$ gives
		$$
		\int_{X_N} \prod_{j=1}^N z_j^{(N-j+1)s} \omega_{\sigma,n}
		=\dfrac{\Res{\z=0}\omega_{\sigma,n}}{N!s^N}
		+\dfrac{1}{N!s^{N-1}}\sum_{j=1}^N(N-j+1)\mathcal{I}_{\sigma,j,n}
		+O(s^{-N+2}).$$
		On the other hand,
		$$\Gamma(s-n)^{-N}
		=(-1)^{Nn}(n!)^Ns^N\bigl(1-N(H_n-\gamma)s+O(s^2)\bigr).$$
		Multiplying these expansions shows first that $I_G$ is holomorphic at $s=-n$, and then gives
		the stated formulas for $I_G(-n)$ and $I_G'(-n)$.
	\end{proof}
	
	\subsection{Application: parity-vanishing criterion}
	We conclude this section by giving a sufficient condition
	for the vanishing of a class of Shintani-type zeta functions at negative integers.
	This result will not be used elsewhere in the paper, so this subsection may be skipped.
	
	Let $L_1,\ldots,L_N:\mathbb{R}^r\to\mathbb{R}$
	be nonzero linear forms with non-negative coefficients.
	Write $L_i(\boldsymbol{x})=\sum_{j=1}^{r} a_{i,j}x_j$.
	Assume that for each $1\le j\le r$
	there exists $1\le i\le N$ such that $a_{i,j}\ne 0$. 
	Define the Shintani-type zeta function
	$$
	Z_{L}(s)=\sum_{n_1,\ldots,n_r\in\mathbb{N}}
	\dfrac{1}{(L_1(\boldsymbol{n})\cdots L_N(\boldsymbol{n}))^s},
	\qquad
	\Re(s)\gg 0.$$
	For $1\le j\le r$, put $L_j^*(\y)=\sum_{i=1}^N a_{i,j}y_i$.
	Let
	\[
	G_L(\y)=\prod_{j=1}^r \dfrac{1}{e^{L_j^*(\y)}-1}.
	\]
	Then $G_L\in\mathcal{R}_N$ (see Example \ref{ex:R-functions})
	and $Z_{L}(s)=I_{G_{L}}(s)$.
	Hence Proposition \ref{prop:I-values} implies that
	$Z_{L}(s)$ admits a meromorphic continuation to $\mathbb{C}$
	and is holomorphic at nonpositive integers.
	
	We now restate and prove Theorem \ref{thm:intro-parity-vanishing}.
	\begin{theorem} \label{thm:parity-vanishing}
		Assume that $N\ge r$ and $L_i(\boldsymbol{x})=x_i$ for each $1\le i\le r$.
		Let $n\in\mathbb{N}$.
		If $Nn+r\equiv 1 \pmod{2}$, then
		$Z_{L}(-n)=0$.
	\end{theorem}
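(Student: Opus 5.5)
The plan is to feed the residue formula \eqref{eq:I-value} of Proposition~\ref{prop:I-values} into a symmetry argument under $\y\mapsto-\y$. Since $Z_L(s)=I_{G_L}(s)$, it suffices to show
\[
S_n(G_L):=\sum_{\sigma\in\mathfrak S_N}\Res{\z=0}\pi^*\bigl(G_L^\sigma(\y)\y^{-n-1}\,d\y\bigr)=0 .
\]
The quantity $S_n(F)$ makes sense for any $F$ that is a product of factors $1/(e^{\ell(\y)}-1)$ with $\ell$ a nonzero linear form with nonnegative coefficients. It is linear in $F$, and it depends only on the Laurent expansion of $F$ at the origin.

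\emph{Step 1 (behaviour under negation).} Let $\psi(\y)=-\y$. Then
\[
\psi^*\bigl(F(\y)\y^{-n-1}d\y\bigr)=(-1)^{N(-n-1)}(-1)^N F(-\y)\y^{-n-1}d\y=(-1)^{Nn}F(-\y)\y^{-n-1}d\y .
\]
Moreover $\psi\circ\pi=\pi\circ\phi$, where $\phi(\z)=(-z_1,z_2,\ldots,z_N)$, and $\psi$ commutes with the permutation action \eqref{eq:sym-group-action}. The iterated residue of an $N$-form is invariant under the coordinate change $z_1\mapsto -z_1$, because $\Res{}$ of $g(-z_1)\,d(-z_1)$ equals $\Res{}$ of $g(z_1)\,dz_1$. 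Hence
\[
S_n\bigl(F(-\,\cdot\,)\bigr)=(-1)^{Nn}S_n(F).
\]

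\emph{Step 2 (expanding $G_L(-\y)$).} Use $\frac{1}{e^{-t}-1}=-1-\frac{1}{e^{t}-1}$ to write
\[
G_L(-\y)=(-1)^r\sum_{T\subset[r]}\prod_{j\in T}\frac{1}{e^{L_j^*(\y)}-1}.
\]
Because $L_j(\boldsymbol x)=x_j$ for $j\le r$, we have $a_{j,k}=\delta_{jk}$ for $j,k\le r$. So the variable $y_j$ with $j\le r$ occurs only in $L_j^*$. Consequently, for $T\ne[r]$ the term $F_T:=\prod_{j\in T}(e^{L_j^*}-1)^{-1}$ is independent of $y_j$ for every $j\in[r]\setminus T$. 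I claim that $S_n(F_T)=0$ for all such $T$.

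If the claim holds, then $S_n(G_L(-\cdot))=(-1)^rS_n(G_L)$. Comparing with Step~1 gives $\bigl((-1)^r-(-1)^{Nn}\bigr)S_n(G_L)=0$. When $Nn+r$ is odd this forces $S_n(G_L)=0$, and therefore $Z_L(-n)=0$.

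\emph{Step 3 (the main obstacle: $S_n(F_T)=0$).} $F_T$ is not in $\mathcal R_N$, since it has no decay in the variables it does not depend on. My remedy is regularization. Let $A$ be the set of variables absent from $F_T$; it is nonempty, as it contains $[r]\setminus T$ and possibly some $y_i$ with $i>r$ for which $a_{i,j}=0$ for all $j\in T$. Put
\[
F_{T,\varepsilon}:=F_T\prod_{i\in A}e^{-\varepsilon y_i},\qquad \varepsilon>0 .
\]
Each variable not in $A$ appears with a positive coefficient in some $L_j^*$ with $j\in T$, so Example~\ref{ex:R-functions} gives $F_{T,\varepsilon}\in\mathcal R_N$. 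The integral then factorizes:
\[
I_{F_{T,\varepsilon}}(s)=\bigl(\varepsilon^{-s}\bigr)^{|A|}\, I_{F'}(s),
\]
where $F'$ is $F_T$ viewed as a function of the variables outside $A$, and $F'\in\mathcal R_{N-|A|}$. The factorization arises because each $y_i$ with $i\in A$ contributes $\Gamma(s)^{-1}\int_0^\infty e^{-\varepsilon y}y^{s-1}\,dy=\varepsilon^{-s}$. If $A$ is all of $[N]$, then $F'=1$ (the case $T=\varnothing$); this case is handled by the same exponential regularization in every variable, which gives $I=\varepsilon^{-Ns}$.

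Applying Proposition~\ref{prop:I-values} twice, $I_{F'}$ is holomorphic at $-n$. Hence $I_{F_{T,\varepsilon}}(-n)=\varepsilon^{n|A|}I_{F'}(-n)$, which is a monomial in $\varepsilon$. On the other hand, by \eqref{eq:I-value}, $I_{F_{T,\varepsilon}}(-n)$ equals the constant $\frac{(-1)^{Nn}(n!)^N}{N!}$ times $S_n(F_{T,\varepsilon})$. The quantity $S_n(F_{T,\varepsilon})$ is a Laurent-coefficient extraction, and it is a polynomial in $\varepsilon$ whose constant term is $S_n(F_T)$. This follows by expanding $e^{-\varepsilon y_i}$ in powers of $\varepsilon$; only finitely many powers contribute to the residue.

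Comparing constant terms in $\varepsilon$ gives $S_n(F_T)=0$ whenever $n|A|>0$, i.e.\ for $n\ge1$, which is exactly the case in the theorem. The care needed here is in verifying that $F'$ really lands in $\mathcal R$ and that the $\varepsilon$-dependence of the residue is polynomial.
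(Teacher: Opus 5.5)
Your proof is correct, but it is organized differently from the paper's. The paper expands each factor as $\sum_k B_k u^{k-1}/k!$ and evaluates the residue termwise on the homogeneous pieces $S_{\boldsymbol{k}}$. The degree count $\sum_j k_j=Nn+r$, together with $B_k=0$ for odd $k\ge3$, forces some $k_{j_0}=1$. Such a piece is independent of $y_{\sigma(j_0)}$, so its residue against $\y^{-n-1}$ vanishes for $n\ge1$.

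You encode the degree count as the sign $(-1)^{Nn}$ that the residue functional picks up under $\y\mapsto-\y$. You encode the vanishing of odd Bernoulli numbers as the identity $1/(e^{-t}-1)=-1-1/(e^{t}-1)$. These are equivalent facts: both say that $1/(e^t-1)+\tfrac12$ is odd. Your route yields $\bigl((-1)^r-(-1)^{Nn}\bigr)S_n(G_L)=0$ in one stroke, leaving only the finitely many correction terms $F_T$, $T\subsetneq[r]$, to dispose of. It is the same kind of sign-flip device the paper itself uses for even $n$ in Theorem~\ref{values_at_nonpositive_even}.

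The final ingredient is common to both proofs: a term not involving some $y_j$ with $j\le r$ has zero residue when $n\ge1$. Your exponential regularization, Mellin factorization and comparison of polynomials in $\varepsilon$ are valid, but heavier than needed. Since $\pi$ is a unimodular monomial substitution, $\z$-monomials correspond bijectively to $\y$-monomials. Every monomial in the expansion of $\pi^*(F_T^\sigma\y^{-n})$ then corresponds to a $\y$-monomial whose exponent in the missing variable is $-n\neq0$, so the constant term vanishes directly.

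What your detour buys is that it stays entirely inside Proposition~\ref{prop:I-values}. It thereby avoids the identification of the $\z$-residue with an iterated $\y$-residue that the paper uses in its last line. What the paper's route buys is directness, and a term-by-term picture of which homogeneous pieces $S_{\boldsymbol{k}}$ can survive at $s=-n$ in general.
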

	
	\begin{proof}
		By \eqref{eq:I-value}, it suffices to show that, for every $\sigma\in\mathfrak{S}_N$,
		\begin{equation} \label{eq:res-to-vanish}
			\Res{\z=0}\pi^*\left(
			G_{L}^\sigma(\y)\y^{-n-1}\,d\y
			\right)=0.
		\end{equation}
		Expand each factor in $G_L(\y)$ by
		\[
		\frac{1}{e^u-1}
		=\sum_{k=0}^{\infty}\frac{B_k}{k!}u^{k-1}.
		\]
		Consequently, we have 
		$G_L(\y)=\sum_{\boldsymbol{k}\in \mathbb{Z}_{\ge 0}^r} S_{\boldsymbol{k}}(\y)$,
		where
		\[
		S_{\boldsymbol{k}}(\y)
		=\prod_{j=1}^r \frac{B_{k_j}}{k_j!}
		\left(L_j^*(\y)\right)^{k_j-1}.
		\]
		Write each residue as a contour integral over sufficiently small circle around the origin.
		Then the sum over $\boldsymbol{k}$ converges absolutely and uniformly on the resulting torus.
		Hence the residue may be evaluated termwise.
		
		Since the exponent of $z_1$ in $\pi^*(S_{\boldsymbol{k}}^\sigma(\y)\y^{-n})$ is
		$\sum_{j=1}^r (k_j-1)-Nn$ and $\pi^*(d\y/\y)=d\z/\z$,
		a necessary condition for a nonzero contribution is
		\begin{equation}
			\label{eq:degree-condition}
			\sum_{j=1}^r k_j=Nn+r.
		\end{equation}
		Thus, it remains to consider only those $\boldsymbol{k}$
		satisfying \eqref{eq:degree-condition} for which $\prod_{j=1}^r B_{k_j}\ne 0$.
		Since $B_k=0$ for every odd integer $k\ge3$,
		each $k_j$ is either $1$ or an even non-negative integer.
		Since $Nn+r$ is odd,
		\eqref{eq:degree-condition} implies that $k_{j_0}=1$ for some $j_0\in \{1,\ldots,r\}$.
		
		By the assumption $L_i(\boldsymbol{x})=x_i$ for $1\le i\le r$,
		the variable $y_{j_0}$ occurs only in $L_{j_0}^*(\y)$.
		Since $k_{j_0}-1=0$, $S_{\boldsymbol{k}}(\y)$ is independent of $y_{j_0}$,
		and hence $S_{\boldsymbol{k}}^\sigma (\y)$ is independent of $y_{i_0}$, where $i_0:=\sigma(j_0)$.
		Since $n\ge 1$, we obtain
		\[
		\Res{\z=0}\pi^*\left(S_{\boldsymbol{k}}^\sigma(\y)\y^{-n-1}\,d\y\right)
		=\Res{y_1=0}\cdots \Res{y_{i_0}=0}\cdots\Res{y_N=0}
		S_{\boldsymbol{k}}^{\sigma} (\y)\y^{-n-1}\,d\y
		=0.
		\]
		Therefore, \eqref{eq:res-to-vanish} holds for every $\sigma\in\mathfrak{S}_N$.
		This proves Theorem \ref{thm:parity-vanishing}.
	\end{proof}
	
	The theorem shows that every odd-rank Witten zeta function $\xi_\Phi(s)$ vanishes at every negative even integer. Also, for the root system $\Phi = A_r$ ($r\equiv 2,3 \pmod{4}$) or $D_r$ ($r$ odd), it vanishes at every negative odd integers. Both facts are special cases of a general vanishing result proved in \cite{au2024vanishing}.
	
	\section{Conical zeta function}
	Throughout this section, $V=\mathbb R^r$ is equipped with the standard lattice $\mathbb Z^r$.
	Let $\L:=(L_1,\ldots,L_N)$ be a multiset of nonzero rational linear forms on $V$, with repetitions allowed, and let $\A:=\{H_i=\ker L_i:1\leq i\leq N\}$ be its underlying hyperplane arrangement. We assume that $\A$ is essential, or equivalently that the forms $L_i$ span $V^\ast$. Define the (complete) \textit{conical zeta function}
	$$\xi_\L(s) := \sum_{\boldsymbol{x} \in \mathbb{Z}^r}^{} {'}\frac{1}{|L_1(\boldsymbol{x})L_2(\boldsymbol{x})\cdots L_N(\boldsymbol{x})|^s},\qquad \Re(s)\gg 0,$$
	where $\sum'$ means that we omit those $\boldsymbol{x}$ for which at least one of the linear forms vanishes.\par
	
	For each chamber $D$ of $\mathcal{A}$, define the (partial) \textit{conical zeta function}
	$$Z_{\L,D}(s) := \sum_{\boldsymbol{x} \in \mathbb{Z}^r \cap D}\frac{1}{|L_1(\boldsymbol{x}) L_2(\boldsymbol{x}) \cdots L_N(\boldsymbol{x})|^s},\qquad \Re(s)\gg 0.$$
	Then evidently
	$$\xi_\L(s) = \sum_{D\in \Ch(\A)} Z_{\L,D}(s).$$

	For each chamber $D$ of $\A$, let $$G_{\L,D}(y_1,\ldots,y_N) = \sum_{\boldsymbol{x}\in \mathbb{Z}^r\cap D} \exp\left(-\sum_{i=1}^N y_i |L_i(\boldsymbol{x})|\right).$$
	
	For given $D\in \Ch(\A)$, there exists $(\varepsilon_1(D),\ldots,\varepsilon_N(D)) \in \{\pm 1\}^N$ such that $$D = \{\boldsymbol{x}\in \mathbb{R}^r: \varepsilon_i(D) L_i(\boldsymbol{x}) > 0\}.$$
	We let $l_D=l_D(\y)$ to be linear functional $\sum_{i=1}^N y_i \varepsilon_i(D) L_i$. Since $l_D$ is positive on $\overline{D}\setminus\{0\}$, the extension of Proposition \ref{prop:Hilbert-series-properties}(b) to half-open cones applies. Hence
	$$G_{\L,D}(\y) = \sum_{\boldsymbol{x}\in \mathbb{Z}^r \cap D} \exp\left(-\sum_{i=1}^N y_i \varepsilon_i(D)L_i(\boldsymbol{x})\right) = \sum_{\boldsymbol{x}\in \mathbb{Z}^r \cap D} e^{-l_D(\boldsymbol{x})} = \mathrm H_D(l_D).$$
	Here the last member is the evaluation, in the sense of Section~2.2, of the group-algebra element $\mathrm H_D$ at the dual vector $l_D=l_D(\y)$.
	This shows $G_{\L,D}(\y)$ is an exponential rational function with exponential decay at infinity, and hence $G_{\L,D}\in \mathcal{R}_N$ as in Section 3. We also have
	$$Z_{\L,D}(s) = I_{G_{\L,D}}(s) = \frac{1}{\Gamma(s)^N} \int_{(0,\infty)^N} \mathrm H_D(l_D) \y^{s-1} d\y.$$
	Let $$G_\L(\y) := \sum_{D\in \Ch(\A)} G_{\L,D}(\y) = \sum_{D\in \Ch(\A)} \mathrm H_D(l_D),$$
	then we obtain the following integral representation of the conical zeta function
	\begin{equation}\label{conical_zeta_int_rep}\xi_\L(s) = \frac{1}{\Gamma(s)^N} \int_{(0,\infty)^N} G_\L(\y) \y^{s-1} d\y.\end{equation}
	
	\begin{example}\label{Ex_1}
		Let $\Lambda=\mathbb{Z}^2=\mathbb{Z}v_1\oplus\mathbb{Z}v_2$, where $v_1=(1,0)$ and $v_2=(0,1)$ are the standard unit vectors. Thus $\mathbb{Z}[\Lambda] = \mathbb{Z}[\e^{\pm v_1},\e^{\pm v_2}]$. Consider the collection of integral linear forms $\mathcal{L} = \{L_1,L_2,L_3\}$ with
		$$L_1(x_1v_1 + x_2v_2)=x_1,\qquad L_2(x_1v_1 + x_2 v_2)=x_2,\qquad L_3(x_1v_1 + x_2v_2)=x_1+2x_2.$$
		For $\varepsilon_i\in\{+,-\}$, let $D_{\varepsilon_1\varepsilon_2\varepsilon_3}$
		denote the chamber on which the signs of
		$(L_1,L_2,L_3)$ are $(\varepsilon_1,\varepsilon_2,\varepsilon_3)$.
		The arrangement has six chambers. Their Hilbert series and the associated functions $G_{\L,D}(\y)$ are shown in the table below.
		\begin{center}
			\begingroup
			\footnotesize
			\setlength{\tabcolsep}{3pt}
			\renewcommand{\arraystretch}{2}
			\begin{tabular}{c|c|c}
				$D$ & $\mathrm H_D$ & $G_{\L,D}(\y)$ \\
				\hline
				$D_{+++}$ &
				$\dfrac{\e^{v_1+v_2}}{(1-\e^{v_1})(1-\e^{v_2})}$ &
				\multirow{2}{*}{$\dfrac{e^{-(y_1+y_3)}}{1-e^{-(y_1+y_3)}}
					\dfrac{e^{-(y_2+2y_3)}}{1-e^{-(y_2+2y_3)}}$} \\
				\cline{1-2}
				$D_{---}$ &
				$\dfrac{\e^{-v_1-v_2}}{(1-\e^{-v_1})(1-\e^{-v_2})}$ & \\
				\hline
				$D_{-++}$ &
				$\dfrac{\e^{-v_1+v_2}(1+\e^{-v_1+v_2})}{(1-\e^{v_2})(1-\e^{-2v_1+v_2})}$ &
				\multirow{2}{*}{$\dfrac{e^{-(y_1+y_2+y_3)}\bigl(1+e^{-(y_1+y_2+y_3)}\bigr)}
					{\bigl(1-e^{-(2y_1+y_2)}\bigr)\bigl(1-e^{-(y_2+2y_3)}\bigr)}$} \\
				\cline{1-2}
				$D_{+--}$ &
				$\dfrac{\e^{v_1-v_2}(1+\e^{v_1-v_2})}{(1-\e^{-v_2})(1-\e^{2v_1-v_2})}$ & \\
				\hline
				$D_{-+-}$ &
				$\dfrac{\e^{-3v_1+v_2}}{(1-\e^{-v_1})(1-\e^{-2v_1+v_2})}$ &
				\multirow{2}{*}{$\dfrac{e^{-(y_1+y_3)}}{1-e^{-(y_1+y_3)}}
					\dfrac{e^{-(2y_1+y_2)}}{1-e^{-(2y_1+y_2)}}$} \\
				\cline{1-2}
				$D_{+-+}$ &
				$\dfrac{\e^{3v_1-v_2}}{(1-\e^{v_1})(1-\e^{2v_1-v_2})}$ & \\
			\end{tabular}
			\endgroup
		\end{center}
		We illustrate the computation of $\mathrm{H}_D$ for $D=D_{+--}$. In the coordinates $av_1+bv_2$, this chamber is given by $a>0$, $b<0$, and $a+2b<0$, and hence
		$$D=\mathbb{R}_{>0}(-v_2)+\mathbb{R}_{>0}(2v_1-v_2).$$
		We compute its Hilbert series directly from the definition. Put $u=a$ and $v=-(a+2b)$. Then $av_1+bv_2=u v_1-(u+v)v_2/2$, and $\mathbb{Z}^2\cap D$ corresponds to $u,v\in\mathbb{N}$ satisfying $u\equiv v\pmod 2$. Hence
		$$\mathrm H_D=\sum_{\substack{u,v\geq1\\u\equiv v\pmod 2}}\e^{u v_1-(u+v)v_2/2}.$$
		For the odd-odd terms, write $u=2m+1$ and $v=2n+1$ with $m,n\geq0$; for the even-even terms, write $u=2m$ and $v=2n$ with $m,n\geq1$. Summing the resulting geometric series gives
		$$
		\begin{aligned}
			\mathrm H_D
			&=\sum_{m,n\geq0}\e^{(2m+1)v_1-(m+n+1)v_2}+\sum_{m,n\geq1}\e^{2mv_1-(m+n)v_2}\\
			&=\frac{\e^{v_1-v_2}}{(1-\e^{-v_2})(1-\e^{2v_1-v_2})}+\frac{\e^{2v_1-2v_2}}{(1-\e^{-v_2})(1-\e^{2v_1-v_2})}\\
			&=\frac{\e^{v_1-v_2}(1+\e^{v_1-v_2})}{(1-\e^{-v_2})(1-\e^{2v_1-v_2})}.
		\end{aligned}
		$$
		Since the signs of $(L_1,L_2,L_3)$ on $D$ are $(+,-,-)$, we have $l_D=y_1L_1-y_2L_2-y_3L_3$. Consequently,
		$$\e^{v_1-v_2}\longmapsto e^{-(y_1+y_2+y_3)},\qquad \e^{-v_2}\longmapsto e^{-(y_2+2y_3)},\qquad \e^{2v_1-v_2}\longmapsto e^{-(2y_1+y_2)}.$$
		Evaluating the preceding expression for $\mathrm H_D$ at $l_D$ gives the displayed formula for $G_{\L,D_{+--}}(\y)$.
		
		In this example, Lemma~\ref{hilbert_chamber_identity} is the following concrete identity in $\mathbb{Q}(\Lambda)$:
		$$
		\begin{aligned}
			\mu_\A(\mathbb{R}^2,\{0\})= 2={}&\frac{\e^{v_1+v_2}}{(1-\e^{v_1})(1-\e^{v_2})}+\frac{\e^{-v_1-v_2}}{(1-\e^{-v_1})(1-\e^{-v_2})}\\
			&+\frac{\e^{-v_1+v_2}(1+\e^{-v_1+v_2})}{(1-\e^{v_2})(1-\e^{-2v_1+v_2})}+\frac{\e^{v_1-v_2}(1+\e^{v_1-v_2})}{(1-\e^{-v_2})(1-\e^{2v_1-v_2})}\\
			&+\frac{\e^{-3v_1+v_2}}{(1-\e^{-v_1})(1-\e^{-2v_1+v_2})}+\frac{\e^{3v_1-v_2}}{(1-\e^{v_1})(1-\e^{2v_1-v_2})}.
		\end{aligned}
		$$
		Therefore we see the complete conical zeta function $$\xi_\L(s) =\sum_{(x_1,x_2) \in \mathbb{Z}^2}^{} {'} \frac{1}{|x_1 x_2 (x_1+2x_2)|^s}$$ has the integral representation $$\xi_\L(s) = \frac{1}{\Gamma(s)^3} \int_{(0,\infty)^3} G_\L(\y) \y^{s-1} d\y$$
		with $$G_\L(\y) = \frac{2e^{-(y_1+y_3)}}{1-e^{-(y_1+y_3)}}
		\frac{e^{-(y_2+2y_3)}}{1-e^{-(y_2+2y_3)}} + \frac{2e^{-(y_1+y_2+y_3)}\bigl(1+e^{-(y_1+y_2+y_3)}\bigr)}
		{\bigl(1-e^{-(2y_1+y_2)}\bigr)\bigl(1-e^{-(y_2+2y_3)}\bigr)} + \frac{2e^{-(y_1+y_3)}}{1-e^{-(y_1+y_3)}}
		\frac{e^{-(2y_1+y_2)}}{1-e^{-(2y_1+y_2)}}.$$
	\end{example}
	This example shows that writing out the explicit expressions for $G_\L(\y)$ in full is generally complicated. Our analysis of $\xi_\L(s)$ does not depend on such an explicit expression.
	
	We now prove Theorem \ref{thm:conical_0}, in the slightly stronger form below.
	\begin{theorem}\label{values_at_nonpositive_even}
		We have
		$$\xi_\L(0) = \mu_\A(V,\{0\}),\qquad \xi_\L(-2n) = 0 \text{ for }n\in \mathbb{N}.$$
	\end{theorem}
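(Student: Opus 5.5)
Apply Proposition~\ref{prop:I-values} to $G=G_\L$, which lies in $\mathcal R_N$ as a finite sum of the $G_{\L,D}$. Formula \eqref{eq:I-value} gives
$$\xi_\L(-n)=\frac{(-1)^{Nn}(n!)^N}{N!}\sum_{\sigma\in\mathfrak S_N}\Res{\z=0}\pi^*\bigl(G_\L^\sigma(\y)\y^{-n-1}d\y\bigr).$$
Since $\pi^*(d\y/\y)=d\z/\z$ and $\pi$ is monomial, each residue reads off a single coefficient of a Laurent-type expansion of $G_\L$ near $\y=0$. The plan is to show that $G_\L$, although a sum of exponential rational functions with poles along the hyperplanes $l_D(\y)=0$-type loci, has a very simple homogeneous expansion.

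\textbf{Step 1: expansion of each chamber term.} Write $\mathrm H_D=p_D/q_D$ with $q_D=\prod(1-\e^{u})$ over extreme-ray generators (triangulating $D$ into half-open simplicial cones if needed, Proposition~\ref{prop:Hilbert-series-properties}(c)). Substitute $\y\mapsto t\y$. Since $l_D$ is linear in $\y$, $\mathrm H_D(t\,l_D)=\mathrm H_D(l_{D}(t\y))$ has a Laurent expansion in $t$, $\sum_{k\ge -r}t^{k}P_{D,k}(\y)$, with $P_{D,k}$ homogeneous rational functions of degree $k$ in $\y$. Under $\pi$, $t$-homogeneity corresponds to the power of $z_1$, so the residue at $\z=0$ of $G^\sigma_\L\y^{-n-1}d\y$ only sees the component of $G_\L$ of total degree $Nn$.

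\textbf{Step 2: the key identity.} I will prove that this homogeneous expansion of $G_\L$ equals that of the constant $\mu_\A(V,\{0\})$ in a suitable sense. The idea is to evaluate the $\mathbb Q(\Lambda)$-identity of Lemma~\ref{hilbert_chamber_identity}, but the evaluation point $l_D$ depends on $D$, which is the obstacle. To handle it, I note that $l_D(\boldsymbol x)=\sum_i y_i|L_i(\boldsymbol x)|$ for $\boldsymbol x\in\overline D$, so the relevant object is the function $\boldsymbol x\mapsto e^{-\sum y_i|L_i(\boldsymbol x)|}$, which is piecewise exponential-linear. Expanding it in $\y$, the degree-$k$ term is $\frac{(-1)^k}{k!}(\sum_i y_i|L_i(\boldsymbol x)|)^k$, a polynomial in the $|L_i(\boldsymbol x)|$. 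So the degree-$Nn$ coefficient computation amounts to a regularized lattice sum $\sum_{D}\sum_{\boldsymbol x\in D}\prod_i|L_i(\boldsymbol x)|^{n}$ (the monomial $\y^{n}\cdots$ extracted by the residue). I expect to make this rigorous by introducing an auxiliary generic $l\in V^*$ and using the valuation property (Proposition~\ref{prop:Hilbert-series-properties}(d)) with weights: differentiate $\mathrm H_D(l)$ in $l$ to produce polynomial weights $\prod_i(\varepsilon_i(D)L_i(\boldsymbol x))^{n}$, and use that on $D$ this equals $\prod_i|L_i(\boldsymbol x)|^{n}\cdot\prod_i\varepsilon_i(D)^n$. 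For $n$ even, the signs disappear, so the weighted chamber sums assemble into the derivative operator $\prod_i L_i(\partial)^n$ applied to $\sum_D\mathrm H_D=\mu_\A(V,\{0\})$ — a constant — which vanishes for $n\ge1$ and returns $\mu_\A(V,\{0\})$ for $n=0$. Lemma~\ref{pointed_vanishing_lemma} handles the lower-dimensional flats as in Lemma~\ref{hilbert_chamber_identity}.

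\textbf{Step 3: normalization.} For $n=0$ the residue from each $\sigma$ should equal $\mu_\A(V,\{0\})$ (the constant term of $G_\L$ in the homogeneous expansion, with $\Res{\z=0}d\z/\z=1$), and summing over $N!$ permutations cancels the $1/N!$, giving $\xi_\L(0)=\mu_\A(V,\{0\})$; for even $n\ge1$ the relevant degree-$Nn$ component vanishes, giving $\xi_\L(-2n)=0$. The main obstacle is Step 2: justifying that the chamberwise homogeneous Laurent components (which individually have poles along $l_D$-dependent loci in $\y$) sum to the corresponding components of a constant; I would first try to prove it by passing to the evaluation at $l=\sum_i y_iL_i$ twisted by signs as above and checking the identity at the level of rational functions in $\y$ via the valuation criterion applied to signed, polynomially weighted indicator functions.
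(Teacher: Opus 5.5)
\textbf{There is a genuine gap.} Your overall frame matches the paper's. You apply Proposition~\ref{prop:I-values} to $G_\L$, reduce to the residues $\Res{\z=0}\pi^*\bigl(G_\L^\sigma(\y)\y^{-n-1}d\y\bigr)$, and use the evenness of $n$ to make the chamber signs $\varepsilon_i(D)$ invisible so that Lemma~\ref{hilbert_chamber_identity} applies. But the step you call the main obstacle is left open, and the route you sketch does not close it.

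The identity $\sum_D\prod_iL_i(\partial)^n\mathrm{H}_D=\prod_iL_i(\partial)^n\mu_\A(V,\{0\})$ holds in $\mathbb{Q}(\Lambda)$. It therefore gives an identity of functions only when every summand is evaluated at the \emph{same} dual vector. The function whose residue you need is $G_\L^\sigma=\sum_D\mathrm{H}_D(l_D^\sigma)$ with $l_D^\sigma=\sum_iy_{\sigma(i)}\varepsilon_i(D)L_i$, which uses a different point for each chamber. Removing the signs from the weights $\prod_i(\varepsilon_i(D)L_i(\boldsymbol x))^n$ does nothing about the signs inside $l_D^\sigma$.

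The regularized divergent lattice sum is only a heuristic. The residue is a Laurent coefficient in $\z$ after the blow-up $\pi$, not the Taylor coefficient of $\y^n$, since $G_\L$ is not holomorphic at $\y=0$. Indeed the homogeneous components of $G_\L$ are not those of a constant in any literal sense. In Example~\ref{Ex_1} the lowest-order part of $G_\L$ is
\[
\frac{4(2y_1+y_2+2y_3)}{(y_1+y_3)(y_2+2y_3)(2y_1+y_2)}\neq0,
\]
and the degree-$0$ part is not constant: it equals $43/18\neq2=\mu_\A(V,\{0\})$ at $\y=(1,1,1)$. So the justification in your Step~3 fails. Its conclusion (each $\sigma$ contributes $\mu_\A(V,\{0\})$ when $n=0$) is true, but it becomes true only after the residue is taken.

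The missing idea is a symmetry of the residue functional itself. For even $n$ and every $\delta\in\{\pm1\}^N$, writing $\delta\y=(\delta_1y_1,\ldots,\delta_Ny_N)$,
\[
\Res{\z=0}\pi^*\bigl(F(\delta\y)\,\y^{-n-1}d\y\bigr)=\Res{\z=0}\pi^*\bigl(F(\y)\,\y^{-n-1}d\y\bigr).
\]
This holds for three reasons:
\begin{itemize}
\item $\pi$ converts $y_m\mapsto\delta_my_m$ into the sign change \eqref{aux_10} of the $\z$-variables;
\item $d\z/\z$ and constant terms of Laurent series are invariant under such sign changes;
\item $\y^{-n}$ is invariant when $n$ is even.
\end{itemize}
Since $l_D^\sigma(\y)=l^\sigma(\delta\y)$ with $l^\sigma=\sum_iy_{\sigma(i)}L_i$ and $\delta_{\sigma(i)}=\varepsilon_i(D)$, you may replace $\mathrm{H}_D(l_D^\sigma)$ by $\mathrm{H}_D(l^\sigma)$ chamber by chamber inside the residue. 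Then Lemma~\ref{hilbert_chamber_identity}, evaluated at the common point $l^\sigma$, gives the constant $\mu_\A(V,\{0\})$. Finally, $\Res{\z=0}\pi^*(\y^{-n-1}d\y)$ is $1$ for $n=0$ and $0$ for $n\geq1$. This is the paper's proof, and it needs no differentiation or regularization.

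Your derivative idea can be salvaged. Residues of exact forms vanish, so $\y^{-n}$ can be traded for $\prod_i\partial_{y_i}^n$ acting on $G_\L$, and for even $n$ this leads to $[\prod_iL_i(\partial)^n\mathrm{H}_D](l_D^\sigma)$. But you would still need the same sign-invariance, now for $d\y/\y$, to pass to a common evaluation point. At that point the detour is unnecessary.
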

	\begin{proof}
		For $\sigma\in\mathfrak{S}_N$, put (see \eqref{eq:sym-group-action} for the action of symmetric group)
		$$l_D^\sigma:=\sum_{i=1}^N y_{\sigma(i)}\varepsilon_i(D)L_i,
		\qquad l^\sigma:=\sum_{i=1}^N y_{\sigma(i)}L_i.$$
		Applying Proposition \ref{prop:I-values} to \eqref{conical_zeta_int_rep}, we have
		$$\begin{aligned}\xi_\L(-n) &=\frac{(-1)^{Nn}(n!)^N}{N!}\sum_{\sigma\in\mathfrak S_N}
			\Res{\z=0}\pi^*\!\left(G^\sigma_\L(\y)\y^{-n-1}d\y\right) \\
			&= \frac{(-1)^{Nn}(n!)^N}{N!}\sum_{\sigma\in\mathfrak S_N} \Res{\z=0}\pi^*\!\left(
			\left(\sum_{D\in\Ch(\A)}\mathrm H_D(l_D^\sigma)\right)
			\y^{-n-1}d\y\right).\end{aligned}$$
		When $n$ is even, we can replace $l_D^\sigma$ into $l^\sigma$ inside the summation, because the residue is unchanged when any $y_i$ is replaced by $-y_i$. Hence
		\begin{equation}\label{aux_15}
			\xi_\L(-n)=\frac{(-1)^{Nn}(n!)^N}{N!}\sum_{\sigma\in\mathfrak S_N}\Res{\z=0}\pi^*\!\left(
			\left(\sum_{D\in\Ch(\A)}\mathrm H_D(l^\sigma)\right)\y^{-n-1}d\y\right).\end{equation}
		Lemma~\ref{hilbert_chamber_identity} gives the formal identity
		$$\sum_{D\in\Ch(\A)}\mathrm H_D=\mu_\A(V,\{0\})$$
		in $\mathbb Q(\Lambda)$. Evaluating it at the dual vector $l^\sigma$ gives, as an identity of exponential rational functions,
		$$\sum_{D\in\Ch(\A)}\mathrm H_D(l^\sigma)=\mu_\A(V,\{0\}).$$
		Substitute it back into \eqref{aux_15} and note that
		$$\pi^*\!\left(\y^{-n-1}d\y\right)=\prod_{j=1}^Nz_j^{-n(N-j+1)-1}\,d\z.$$
		Its residue is $1$ for $n=0$ and $0$ for every positive $n$, completing the proof.
	\end{proof}
	
	A curious and surprising consequence of this theorem is the following "multiplicity-independent" result, which is in general false for partial conical zeta $Z_{\L,D}(s)$ and Shintani zeta function; see, for example, \cite[Theorem 2.7]{borwein2018derivatives}.
	\begin{corollary}
		Let $L_1,\ldots,L_N$ be nonzero rational linear forms spanning the dual space, and $n_1,\ldots,n_N$ be positive integers. Then the value of following function at $s=0$ is independent of $n_1,\ldots,n_N$:
		$$\sum_{\boldsymbol{x} \in \mathbb{Z}^r}^{} {'}\frac{1}{|L_1(\boldsymbol{x})^{n_1} L_2(\boldsymbol{x})^{n_2}\cdots L_N(\boldsymbol{x})^{n_N}|^s}.$$
	\end{corollary}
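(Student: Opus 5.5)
The corollary follows directly from Theorem~\ref{values_at_nonpositive_even}, applied to a multiset in which each form is repeated according to its multiplicity. Given $L_1,\ldots,L_N$ and positive integers $n_1,\ldots,n_N$, form the multiset
$$\L'=(\underbrace{L_1,\ldots,L_1}_{n_1},\ldots,\underbrace{L_N,\ldots,L_N}_{n_N}),$$
with $N'=n_1+\cdots+n_N$ forms. Since the definition of $\xi_\L$ explicitly allows repetitions, the function in the statement is exactly $\xi_{\L'}(s)$. There are two points to check. First, the summation condition ``omit $\boldsymbol{x}$ if some form vanishes'' is the same for $\L$ and $\L'$, since the two multisets contain the same forms. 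Second, $|L_1(\boldsymbol{x})^{n_1}\cdots L_N(\boldsymbol{x})^{n_N}|^s$ is the product of all forms of $\L'$ raised to the power $s$.

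Next, I observe that the underlying hyperplane arrangement of $\L'$ is $\A'=\{\ker L:L\in\L'\}$, which as a \emph{set} of hyperplanes coincides with $\A=\{\ker L_i\}$. Since the $L_i$ span $V^\ast$, this arrangement is essential, and it is also central and rational. The hypotheses of Section~4 are therefore satisfied for $\L'$, and Theorem~\ref{values_at_nonpositive_even} gives
$$\xi_{\L'}(0)=\mu_{\A'}(V,\{0\})=\mu_{\A}(V,\{0\}).$$
The right-hand side depends only on the intersection poset $L(\A)$, which is determined by the set of hyperplanes alone. Hence the value at $s=0$ is independent of $n_1,\ldots,n_N$.

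The only point needing any care is that the analytic continuation of $\xi_{\L'}$ to $s=0$, via \eqref{conical_zeta_int_rep} and Proposition~\ref{prop:I-values}, is carried out with $N'$ variables $y_i$, one for each copy of each form. The proof of Theorem~\ref{values_at_nonpositive_even} never assumes the $L_i$ are pairwise distinct. Indeed, the chambers, the Hilbert series $\mathrm H_D$, and Lemma~\ref{hilbert_chamber_identity} all depend only on the arrangement. The evaluation at $l^\sigma=\sum_i y_{\sigma(i)}L_i$ is still a valid dual vector when some $L_i$ repeat, so the argument goes through verbatim. This verification is routine, and I expect no real obstacle.
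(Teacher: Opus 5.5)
Your proposal is correct and matches the paper's proof: both realize the sum as $\xi_{\L'}$ for the multiset in which each $L_i$ appears $n_i$ times, and then apply Theorem~\ref{values_at_nonpositive_even}. The value is $\mu_\A(V,\{0\})$, which depends only on the underlying arrangement and not on the multiplicities. Your extra checks (same omitted set, product of forms, and that the proof never uses distinctness of the $L_i$) are fine and only spell out what the paper leaves implicit.
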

	\begin{proof}
		Let $\mathcal{L}$ be the multi-set formed from $L_1,\ldots, L_N$ with each $L_i$ occurring $n_i$ times. Then the quantity concerned is $\xi_\L(0) = \mu_\A(V,\{0\})$. The RHS depends only on the hyperplane arrangement $\A = \{\ker L_1,\ldots,\ker L_N\}$, which forgets the multiplicity $n_i$. 
	\end{proof}
	
	\subsection{Witten zeta function}
	Given a root system $\Phi$, we adopt the notations as in Section \ref{root_system_subsection}. Consider the following $N$ integral linear forms from $\mathbb{R}^r$ to $\mathbb{R}$:
	$$L_\gamma(x_1,\ldots,x_r) := \sum_{j=1}^r x_j (\lambda_j, \gamma^\vee),\qquad \gamma\in \Phi^+.$$
	Then $\ker L_\gamma = H_\gamma$ are the reflecting hyperplane of the roots. The partial conical zeta functions are the same for all chambers. Define
	$$\xi_\Phi(s) :=\frac{1}{|W|}\sum_{\boldsymbol{x}\in \mathbb{Z}^r}^{} {'} \frac{1}{\prod_{\gamma\in \Phi^+} |L_\gamma(\boldsymbol{x})|^s} =\sum_{\boldsymbol{x}\in \mathbb{N}^r}^{} \frac{1}{\prod_{\gamma\in \Phi^+} L_\gamma(\boldsymbol{x})^s}.$$
	Thus $\xi_{\Phi}(s)$ is, up to multiplication by $|W|$, the conical zeta function $\xi_\L(s)$ associated with the family $\L=(L_\gamma)_{\gamma\in\Phi^+}$; its underlying arrangement $\A$ is the Weyl arrangement.
	\begin{remark}
		Let $G$ be a simply-connected compact Lie group, $\Phi$ be its root system. The function $\xi_\Phi(s)$ is closely related to the representative-theoretic object $$\zeta_G(s) = \sum_{\rho} \frac{1}{(\text{dim } \rho)^s},\quad \rho\in \{\text{finite-dimensional irreducible representations of $G$}\}.$$
		More precisely, we have
		$$\xi_\Phi(s) = K_\Phi^{-s} \zeta_G(s),$$
		where the normalization constant $K_\Phi \in \mathbb{N}$
		is given by 
		\begin{equation}\label{aux_5}K_\Phi = \prod_{\gamma\in \Phi^+} (\rho, \gamma^\vee) = \prod_{i=1}^r e_i! \qquad \rho:= \frac{1}{2}\sum_{\gamma\in \Phi^+} \gamma \text{ is the Weyl vector}.\end{equation}
		Thus $\xi_\Phi(s)$ defined above agrees with the normalized Witten zeta function introduced in Section 1.
	\end{remark}
	
	\begin{proof}[Proof of Theorem \ref{thm:witten_0}]
		The result follows by combining Theorem \ref{values_at_nonpositive_even} and Lemma \ref{prod_exp}.
	\end{proof}
	
	\begin{remark}
		(a) Theorem \ref{values_at_nonpositive_even} also implies that
		$$\xi_\Phi(-2n)=0,$$
		i.e., the Witten zeta function vanishes at every negative even integer. The case $n=1$ gives, for simply-connected compact Lie groups, another proof of the vanishing at $s=-2$ conjectured by Kurokawa and Ochiai \cite{kurokawa2013zeros}. When the rank $r$ is odd, the vanishing at every negative even integer also follows directly from the parity criterion in Theorem \ref{thm:parity-vanishing}, and hence is a consequence of a more general parity phenomenon for Shintani-type zeta functions.
		
		More strongly, for Witten zeta functions, it is proved in \cite{au2024vanishing} that the order of vanishing at each such point is at least $r$. The same argument and conclusion extend to $\xi_\L(s)$ for general $\L$.
		\par (b) When \(n\) is positive and odd, the proof above does not provide further insight about the value $\xi_\L(-n)\in \mathbb{Q}$. It is in general non-zero. Proposition \ref{prop:I-values} gives a consice but inefficient formula for computing it: the formula involves a sum of \(N!\) residues, each of which is itself difficult to evaluate. Even for Witten zeta functions, the efficient computation of \(\xi_\Phi(-n)\) remains largely open and is currently understood only for root systems of rank two \cite{au2024single}. These values may be of particular interest because they appear to possess certain $p$-adic properties analogous to $\zeta(-n)$ of the Riemann zeta function.
	\end{remark}

	\subsection{Chamber sums and partial residues}
	We shall need the following two propositions in the next section. For a $\Psi\in \mathcal{R}_N$ and the map $\pi$ as in \eqref{eq:pi-def}, define
	$$\mathscr R_j(\Psi):=\Res{z_k=0\ (k\ne j)}\pi^*\left(\Psi(\y)\frac{d\y}{\y}\right),$$
	where the residue is taken with respect to variables $z_1,\ldots,z_{j-1},z_{j+1},\ldots,z_N$, so that $\mathscr R_j(\Psi)$ is a one-form with respect to the remaining variable $z_j$. \par
	Since $\pi^*(d\y/\y)=d\z/\z$, changing any of the $N-1$ $\z$-variables other than $z_j$ to its negative does not affect $\mathscr R_j(\Psi)$. A change of signs $y_m\mapsto\delta_my_m$ induces
	\begin{equation}\label{aux_10}z_1\mapsto\delta_1z_1,\qquad z_m\mapsto\delta_m\delta_{m-1}z_m\quad(m\geq2).\end{equation}
	Therefore we see
	\begin{itemize}[leftmargin=*]
		\item When $j=1$, changing any of the $y_2,\ldots,y_N$ to its negative does not affect $\mathscr R_j(\Psi)$.
		\item When $j\geq 2$, changing any of the $y_1,\ldots,y_N$ to its negative, with the condition that $y_{j-1},y_j$ change by same sign, does not affect $\mathscr R_j(\Psi)$.
	\end{itemize}
	
	Let $$H_i^+=\{x\in V:L_i(x)>0\},\qquad H_i^-=\{x\in V:L_i(x)<0\}.$$
	For a one-dimensional flat $X\not\subset H_i$, let $u_{X,i}$ be the lattice vector determined by
	$$X\cap H_i^+\cap\mathbb Z^r= \mathbb{N} u_{X,i}.$$
	\begin{proposition}\label{one_sided_residue}
		For $\sigma \in \mathfrak{S}_N$, let $k = \sigma^{-1}(1)$, then
		$$\mathscr{R}_1(G_\L^\sigma)=\sum_{\substack{X\in L(\A)\\\dim X=1, \ X\not\subset H_k}}
		\frac{2\mu_\A(V,X)}{\exp(z_1L_k(u_{X,k}))-1}\frac{dz_1}{z_1}.$$
	\end{proposition}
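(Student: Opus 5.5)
The plan is to compute $\mathscr R_1(G_\L^\sigma)$ chamber by chamber. First we use the sign-invariance of $\mathscr R_1$ to remove all the chamber-dependent signs except the one attached to $L_k$. Then we sum over chambers with the valuation technique of Section~2.2, applied separately on the two open half-spaces $H_k^\pm$. This collapses everything onto one-dimensional flats.

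\emph{Step 1 (normalizing signs).} We have $G_\L^\sigma(\y)=\sum_D \mathrm H_D(l_D^\sigma)$ with $l_D^\sigma=\sum_i y_{\sigma(i)}\varepsilon_i(D)L_i$. The variable $y_1$ occurs only through $i=k$. Since $\mathscr R_1$ is linear, we apply the first bullet after \eqref{aux_10} to each chamber term separately, flipping $y_{\sigma(i)}$ for $i\neq k$. This lets us replace $l_D^\sigma$ by
$$m^{\pm}:=\pm y_1L_k+\sum_{i\ne k}y_{\sigma(i)}L_i ,$$
with the sign $\varepsilon_k(D)$. Hence
$$\mathscr R_1(G_\L^\sigma)=\mathscr R_1\Bigl(\sum_{D\subset H_k^+}\mathrm H_D(m^+)+\sum_{D\subset H_k^-}\mathrm H_D(m^-)\Bigr),$$
where each bracket is read as the evaluation of an element of $\mathbb Q(\Lambda)$. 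The residue in $z_2,\dots,z_N$ at fixed $z_1\neq0$ is computed from Laurent expansions, so it only needs identities between exponential rational functions, not membership in $\mathcal R_N$ of each piece.

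\emph{Step 2 (half-space valuation).} We restrict the identity of Proposition~\ref{indicator_identity} to $H_k^+$:
$$\sum_{D\subset H_k^+}\ind_D=\sum_{Y}\mu_\A(V,Y)\ind_{Y\cap H_k^+}.$$
Each $Y\cap H_k^+$ is a finite union of half-open cones, so Proposition~\ref{prop:Hilbert-series-properties}(d) applies. The terms are handled as follows.
\begin{itemize}[leftmargin=*]
\item If $Y\subset H_k$, then $Y\cap H_k^+$ is empty and contributes nothing.
\item If $\dim Y\ge2$ and $Y\not\subset H_k$, then $Y\cap H_k$ has positive dimension, so it contains a nonzero lattice vector $w$ with $Y\cap H_k^+ + w=Y\cap H_k^+$. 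By Lemma~\ref{pointed_vanishing_lemma} the Hilbert series is $0$.
\item If $\dim Y=1$ and $Y\not\subset H_k$, then $Y\cap H_k^+\cap\mathbb Z^r=\mathbb N u_{Y,k}$, with Hilbert series $\e^{u}/(1-\e^{u})$, where $u=u_{Y,k}$. When $r=1$ this case includes $Y=V$, which is consistent with the statement.
\end{itemize}
Thus
$$\sum_{D\subset H_k^+}\mathrm H_D=\sum_{\substack{\dim X=1\\ X\not\subset H_k}}\mu_\A(V,X)\frac{\e^{u_{X,k}}}{1-\e^{u_{X,k}}}.$$
The analogous formula holds on $H_k^-$, with $u_{X,k}$ replaced by $-u_{X,k}$.

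\emph{Step 3 (evaluation and residue).} Put $a(\y):=m^+(u_{X,k})=y_1L_k(u_{X,k})+\sum_{i\ne k}y_{\sigma(i)}L_i(u_{X,k})$.
\begin{itemize}[leftmargin=*]
\item The positive side evaluates to $1/(e^{a}-1)$.
\item The negative side evaluates to $1/(e^{-\tilde a}-1)$ with $\tilde a=m^-(-u_{X,k})=y_1L_k(u_{X,k})-\sum_{i\ne k}y_{\sigma(i)}L_i(u_{X,k})$. One more flip of $y_2,\dots,y_N$, which is again allowed, turns $\tilde a$ into $a$. The identity $e^{-a}/(1-e^{-a})=1/(e^a-1)$ then shows it equals the positive side.
\end{itemize}
This accounts for the factor $2$. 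Finally we pull back by $\pi$. Since $y_m=z_1\cdots z_m$, we get $a=z_1\bigl(c+z_2(\cdots)\bigr)$ with $c=L_k(u_{X,k})>0$. For fixed $z_1\ne0$, the function $1/(e^{a}-1)$ is holomorphic near $z_2=\dots=z_N=0$. Because $\pi^*(d\y/\y)=d\z/\z$, the residue in $z_2,\dots,z_N$ is just evaluation at $z_2=\dots=z_N=0$. This yields $\frac{1}{\exp(z_1L_k(u_{X,k}))-1}\frac{dz_1}{z_1}$, and summing over $X$ gives the claim.

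\emph{Main obstacle.} I expect the most delicate point to be justifying Step~1 rigorously. The sign flips act on individual chamber terms, which may have non-integrable singularities once the signs are changed. The intermediate identities of Steps 2--3 likewise involve terms like $1/(e^a-1)$ whose linear form $a$ is not positive on $(0,\infty)^N$. I would handle this by working throughout with iterated Laurent expansions in $z_2,\dots,z_N$ at a fixed small $z_1\neq0$, where all expressions are meromorphic. Both the residue and the change of variables \eqref{aux_10} are purely algebraic operations on these expansions. I would also check that the evaluation maps $\operatorname{ev}_{m^\pm}$ do not kill the denominators appearing in the rational identities of Step~2. This holds because $L_k(u_{X,k})\neq0$ makes $\operatorname{ev}(1-\e^{\pm u})=1-e^{\mp a}$ nonzero generically in $z_1$.
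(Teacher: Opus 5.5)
Your proposal is correct and follows the paper's proof essentially step for step: you normalize all signs except that of $L_k$ using the invariance of $\mathscr R_1$ under sign changes of $y_2,\ldots,y_N$, apply the Hilbert-series valuation to Proposition~\ref{indicator_identity} restricted to $H_k^\pm$ so that only one-dimensional flats survive, and then note that the residue in $z_2,\ldots,z_N$ sees only the $z_1$-coefficient $L_k(u_{X,k})$. One small slip: the negative side evaluates to $e^{-\tilde a}/(1-e^{-\tilde a})=1/(e^{\tilde a}-1)$, not $1/(e^{-\tilde a}-1)$ (the latter would not match the positive side), though your subsequent appeal to $e^{-a}/(1-e^{-a})=1/(e^a-1)$ shows this is what you intended.
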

	\begin{proof}
		For $D\in\Ch(\A)$, define
		$$p_+^\sigma=y_1L_k+\sum_{i\ne k}y_{\sigma(i)}L_i,\qquad 
		p_-^\sigma=-y_1L_k+\sum_{i\ne k}y_{\sigma(i)}L_i.$$
		As we remarked, changing signs of $y_2,\ldots,y_N$ does not affect $\mathscr{R}_1$, it follows that
		\begin{align}\label{aux_1}
			\mathscr{R}_1(G_\L^\sigma) &= \mathscr{R}_1\left(\sum_{D\in\Ch(\A)}\mathrm H_D(l_D^\sigma)\right) \nonumber \\ 
			&= \mathscr{R}_1
			\left(\sum_{D\subset H_k^+}\mathrm H_D(p_+^\sigma)+\sum_{D\subset H_k^-}\mathrm H_D(p_-^\sigma)\right).
		\end{align}
		On the other hand, multiplying both sides of the identity in Proposition \ref{indicator_identity} by the indicator function $\ind_{H_k^\pm}$ gives
		$$\sum_{D\subset H_k^\pm}  \ind_{D} =\sum_{\substack{X\in L(\A)\\X\not\subset H_k}}
		\mu_\A(V,X)\ind_{X\cap H_k^\pm}.$$
		Applying the Hilbert-series valuation gives the formal identity
		$$\sum_{D\subset H_k^\pm}\mathrm H_D
		=\sum_{\substack{X\in L(\A)\\X\not\subset H_k}}
		\mu_\A(V,X)\mathrm H_{X\cap H_k^\pm}$$
		in $\mathbb Q(\Lambda)$. If $\dim X\geq2$, then $\dim(X\cap H_k)\ge 1$, and $X\cap H_k^\pm$ is invariant under translation by every vector in $X\cap H_k$. Lemma~\ref{pointed_vanishing_lemma} therefore gives $\mathrm H_{X\cap H_k^\pm}=0$. Thus only the case $\dim X=1$ contributes. For such an $X$, the definition of $u_{X,k}$ gives
		$$X\cap H_k^\pm \cap\mathbb Z^r= \pm \mathbb{N}\; u_{X,k}.$$
		Hence $$\mathrm H_{X\cap H_k^+} = \sum_{n\geq 1} \e^{n u_{X,k}} = \frac{\e^{u_{X,k}}}{1-\e^{u_{X,k}}} = \frac{1}{\e^{-u_{X,k}}-1},\qquad \mathrm H_{X\cap H_k^-} = \frac{1}{\e^{u_{X,k}}-1}.$$
		Evaluating them at $p_\pm^\sigma$ yields
		$$\mathrm H_{X\cap H_k^+}(p_+^\sigma)=\frac{1}{\exp(p_+^\sigma(u_{X,k}))-1},\qquad \mathrm H_{X\cap H_k^-}(p_-^\sigma)=\frac{1}{\exp(p_-^\sigma(-u_{X,k}))-1}.$$
		Substitute this to \eqref{aux_1} gives
		$$\mathscr{R}_1(G_\L^\sigma) = \sum_{\substack{X\in L(\A)\\ \dim X = 1, \ X\not\subset H_k}} \mu_\A(V,X) \mathscr{R}_1 \left(\frac{1}{\exp(p_+^\sigma(u_{X,k}))-1}  + \frac{1}{\exp(p_-^\sigma(-u_{X,k}))-1}
		\right).$$
		It is easy to evaluate the two residues using the observation that
		$\pi^*\!\left(\frac{d\y}{\y}\right)
		=\frac{d\z}{\z}$ and
		$$\Res{z_2=\cdots=z_N=0}\frac{1}{\exp\left(az_1+\sum_{m=2}^Nc_mz_1\cdots z_m\right)-1}
		\frac{d\z}{\z} =\frac{1}{\exp(az_1)-1}\frac{dz_1}{z_1},\qquad a\in\mathbb{Q}^\times, \ c_2,\ldots,c_N\in \mathbb{Q}.$$
		Applying this with $a=L_k(u_{X,k})$ says that both residues equal $(\exp(z_1L_k(u_{X,k}))-1)^{-1} \frac{dz_1}{z_1}$, completing the proof.
	\end{proof}
	
	\begin{proposition}\label{laurent_partial_residue}
		Fix $2\leq j\leq N$, we have
		$$\mathscr{R}_j(G_\L^\sigma) \in (\mathbb{Q}z_j^{-1}+\mathbb{Q}+\mathbb{Q}z_j)\frac{dz_j}{z_j}.$$\end{proposition}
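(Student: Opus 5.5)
The plan is to mimic the proof of Proposition~\ref{one_sided_residue}, replacing the single half-space $H_k^\pm$ by a pair of half-spaces. Put $a:=\sigma^{-1}(j-1)$ and $b:=\sigma^{-1}(j)$, so that $y_{j-1}$ multiplies $L_a$ and $y_j$ multiplies $L_b$ in $l^\sigma_D$. By the remark preceding Proposition~\ref{one_sided_residue}, $\mathscr R_j$ is unchanged when we flip signs of arbitrary $y_m$, provided $y_{j-1}$ and $y_j$ are flipped together.

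\textbf{Step 1: reduce to two forms.} Using these sign flips we replace $l^\sigma_D$ by
$$q_\pm:=y_{j-1}L_a\pm y_jL_b+\sum_{i\ne a,b}y_{\sigma(i)}L_i,$$
with $q_+$ used when $\varepsilon_a(D)\varepsilon_b(D)=+1$ and $q_-$ when it is $-1$. (Replacing $q_\pm$ by $-q_\pm$ is also allowed, and this is what makes the flips of $y_{j-1}$ and $y_j$ together harmless.) This gives
$$\mathscr R_j(G_\L^\sigma)=\mathscr R_j\Bigl(\sum_{D\subset Q_+}\mathrm H_D(\pm q_+)+\sum_{D\subset Q_-}\mathrm H_D(\pm q_-)\Bigr),$$
where $Q_\pm$ are the unions of opposite open quadrants cut out by $L_a$ and $L_b$.

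\textbf{Step 2: valuation identity.} Multiply Proposition~\ref{indicator_identity} by $\ind_{Q_\pm}$ and apply the valuation, Proposition~\ref{prop:Hilbert-series-properties}(d). Each flat then contributes $\mu_\A(V,X)\,\mathrm H_{X\cap Q_\pm}$. By Lemma~\ref{pointed_vanishing_lemma}, every flat with $\dim(X\cap H_a\cap H_b)\ge1$ drops out. The survivors are of three kinds:
\begin{itemize}
\item one-dimensional flats, which give rays exactly as in Proposition~\ref{one_sided_residue};
\item two-dimensional flats with $X\cap H_a\cap H_b=\{0\}$, which give two-dimensional rational cones (quadrants of $X$, or a half-plane minus a ray);
\item flats with $X\not\subset H_a$ but $X\subset H_b$ (and symmetrically), which reduce to half-space pieces and again only leave one-dimensional contributions.
\end{itemize}
So $G^\sigma_\L$ becomes, modulo $\mathscr R_j$, an explicit finite sum of functions of the form
$$\frac{P}{(e^{q(w_1)}-1)(e^{q(w_2)}-1)}\qquad\text{or}\qquad \frac{1}{e^{q(w)}-1},$$
with $q=\pm q_\pm$ and lattice vectors $w,w_1,w_2$ in the relevant flats.

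\textbf{Step 3: compute the residue.} Substituting $\y=\pi(\z)$ gives
$$q(w)=z_1\cdots z_{j-1}\Bigl(\alpha_w+z_j\bigl(\beta_w+O(z_{j+1})\bigr)\Bigr),\qquad \alpha_w=L_a(w),\ \beta_w=\pm L_b(w),$$
modulo the earlier variables. The residues in $z_1,\dots,z_{j-1}$ (with $d\z/\z$) only see the common factor $t=z_1\cdots z_{j-1}$ up to corrections from earlier variables. As in the final step of Proposition~\ref{one_sided_residue}, terms involving the earlier-variable coefficients die in the residue. The effect is to extract the $t^0$ Laurent coefficient of a Todd-type expansion. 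That coefficient is a homogeneous rational function of degree $0$ in the quantities $\alpha_w+z_j\beta_w$. After the residues in $z_{j+1},\dots,z_N$, only $\alpha_w+z_j\beta_w$ survives. The outcome is therefore a rational function of $z_j$ that is bounded as $z_j\to\infty$, which produces at most the $\mathbb Q z_j$ term after multiplying by $1/z_j$ relative to $dz_j/z_j$. Its denominator is a product of at most two factors $\alpha_w+z_j\beta_w$. When $\alpha_w=0$, i.e.\ $w\in H_a$, a factor becomes a pure power of $z_j$ and yields the $z_j^{-1},z_j^{-2}$ terms.

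\textbf{Main obstacle.} The hard part is showing that the factors with $\alpha_w\beta_w\neq0$, which have poles at $z_j=-\alpha_w/\beta_w\ne0$, cancel in the total. I would first try to pair each cone contribution $X\cap Q_+$ evaluated at $q_+$ with the contribution of $X\cap Q_-$ at $q_-$. I expect the $Q_+$ and $Q_-$ pieces of each flat $X$ to glue, via $\ind_{X\cap Q_+}+\ind_{X\cap Q_-}+(\text{lower-dimensional pieces})=\ind_X$, so that the combination is a Hilbert series of a translation-invariant set evaluated at a single form. Such a combination vanishes by Lemma~\ref{pointed_vanishing_lemma}, up to boundary rays lying in $H_a$ or $H_b$. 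Those boundary terms are precisely the ones with $\alpha_w\beta_w=0$, and they produce only the allowed terms in $(\mathbb Q z_j^{-1}+\mathbb Q+\mathbb Q z_j)\frac{dz_j}{z_j}$. If the gluing fails because $q_+\ne q_-$ on $X$, the fallback is a direct partial-fraction check: compare, for each $X$, the $z_j$-residues at $-\alpha_w/\beta_w$ from the two sides and show that they are negatives of each other.
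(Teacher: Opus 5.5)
There is a genuine gap. Your Steps 1 and 2 coincide with the paper's Steps 1--3. You use the same indices $a=\sigma^{-1}(j-1)$, $b=\sigma^{-1}(j)$, the same sign-normalized forms (your $q_\pm$, the paper's $p^\sigma_{\pm}$), the same localization of Proposition~\ref{indicator_identity} to $Q_\pm=\{\pm L_aL_b>0\}$, and the same use of Lemma~\ref{pointed_vanishing_lemma}. The gap comes next.

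\textbf{The main obstacle is left open and rests on a misreading.} You leave the step you call the main obstacle unproved. The gluing you propose cannot work as stated: chambers with $s(D)=+1$ and $s(D)=-1$ are evaluated at different forms $q_+\neq q_-$, and $\mathscr R_j$ does not allow flipping $y_j$ without $y_{j-1}$. More to the point, the obstacle does not exist; it comes from misidentifying the surviving pieces.
\begin{itemize}
\item Since $Q_\pm$ is stable under $x\mapsto -x$, a one-dimensional flat $X$ meets it in $\emptyset$ or in $X\setminus\{0\}$. The Hilbert series of $X\setminus\{0\}$ is $\mathrm H_X-\mathrm H_{\{0\}}=-1$, so these flats contribute constants, not ray series as in Proposition~\ref{one_sided_residue}.
\item Flats contained in $H_a$ or $H_b$ meet $Q_\pm$ in $\emptyset$, so your third bullet contributes nothing.
\item A surviving two-dimensional flat meets $Q_\pm$ in exactly two opposite open quadrants, never a half-plane minus a ray. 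Their extreme rays lie on $X\cap H_a$ and $X\cap H_b$.
\end{itemize}
Hence every denominator factor comes from a ray $w\in H_a\cup H_b$, i.e.\ $\alpha_w\beta_w=0$, and no pole at $z_j=-\alpha_w/\beta_w\neq0$ can occur. In the paper each flat's contribution lies in the target space individually; no cancellation between flats, or between $Q_+$ and $Q_-$, is needed.

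\textbf{Step 3 is wrong as a computation.} The terms $y_m$ with $m<j-1$ do not die in the residue. In Proposition~\ref{one_sided_residue} the discarded terms come from later variables and are of higher order. Here the earlier terms are of lower order and can dominate. Put $q_1=1$ and $q_m=z_2\cdots z_m$. Then $\pi^*(p^\sigma_s)|_X=z_1(A_XL_a+B_{X,s}L_b)$, with nested polynomials $A_X=\sum_{m\neq j}\alpha_mq_m$ and $B_{X,s}=\sum_{m\neq j}\beta_mq_m+sq_j$. The earlier coefficients decide the leading monomials and change the constant terms. For example, take $N=j=3$, $A=\alpha_1+z_2$ and $B=sz_2z_3$. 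The constant term in $z_2$ of $B/A$ is $sz_3$ if $\alpha_1=0$, but $0$ if $\alpha_1\neq0$. Your heuristic would return $sz_3$ in both cases.

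\textbf{What the paper does instead.} It first takes the constant term in $z_1$ of a quadrant contribution, which lies in $\mathbb Q+\mathbb Q\,A_X/B_{X,s}+\mathbb Q\,B_{X,s}/A_X$. It then uses the coefficient pattern forced by the choice of $a,b$: the coefficients of $q_{j-1},q_j$ are $1,0$ in $A_X$ and $0,s$ in $B_{X,s}$. This pattern gives, for the remaining constant terms in the variables other than $z_1,z_j$, $\operatorname{CT}(B_{X,s}/A_X)\in\mathbb Q+\mathbb Q z_j$ and $\operatorname{CT}(A_X/B_{X,s})\in\mathbb Q z_j^{-1}+\mathbb Q$. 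This coefficient pattern, not a cancellation, is what produces exactly the three allowed powers of $z_j$.
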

	\begin{proof}		
		\noindent\emph{Step 1: fix the chamber signs.}
		Put $a:=\sigma^{-1}(j-1), b:=\sigma^{-1}(j)$ and, for $D\in\Ch(\A)$, set
		$$s(D):=\varepsilon_a(D)\varepsilon_b(D)\in\{\pm1\}.$$
		For a fixed chamber $D$, set $$p_{s(D)}^\sigma
		:=\sum_{m\ne j}y_mL_{\sigma^{-1}(m)}
		+s(D)y_jL_b.$$
		The sign-change observation in \eqref{aux_10} says
		\begin{equation}\label{partial_residue_sign_split}
			\mathscr R_j(G_\L^\sigma)
			=\sum_{s\in \{1,-1\}}\mathscr R_j\left(
			\sum_{\substack{D\in\Ch(\A)\\s(D)=s}}
			\mathrm H_D(p_s^\sigma)\right).
		\end{equation}
		
		\smallskip
		\noindent\emph{Step 2: apply Proposition \ref{indicator_identity}.}
		Let
		$$U_s:=\{x\in V:sL_a(x)L_b(x)>0\}.$$
		The condition $s(D)=s$ is equivalent to $D\subset U_s$. Multiplying
		Proposition~\ref{indicator_identity} by $\ind_{U_s}$ and applying the Hilbert-series valuation yields the formal identity
		$$\sum_{\substack{D\in\Ch(\A)\\s(D)=s}}
		\mathrm H_D
		=\sum_{X\in L(\A)}\mu_\A(V,X)
		\mathrm H_{X\cap U_s}$$
		in $\mathbb Q(\Lambda)$. Evaluating this identity at the dual vector $p_s^\sigma$ gives
		$$\sum_{\substack{D\in\Ch(\A)\\s(D)=s}}
		\mathrm H_D(p_s^\sigma)
		=\sum_{X\in L(\A)}\mu_\A(V,X)
		\mathrm H_{X\cap U_s}(p_s^\sigma).$$
		Substitution in \eqref{partial_residue_sign_split} gives
		\begin{equation}\label{partial_residue_flat_expansion}
			\mathscr R_j(G_\L^\sigma)
			=\sum_{s\in \{1,-1\}}\sum_{X\in L(\A)}\mu_\A(V,X)
			\mathscr R_j\left(\mathrm H_{X\cap U_s}(p_s^\sigma)\right).
		\end{equation}
		It remains to show every $$\mathscr R_j(\mathrm H_{X\cap U_s}(p_s^\sigma)) = \operatorname{CT}_{z_k=0\ (k\neq j)}(\mathrm H_{X\cap U_s}(p_s^\sigma)) \frac{dz_j}{z_j} \in(\mathbb{Q}z_j^{-1}+\mathbb{Q}+\mathbb{Q}z_j)\frac{dz_j}{z_j},$$
		here the middle expression means constant terms of the Laurent expansion with respect to variables $z_1,\ldots,z_N$ except $z_j$.
		
		\smallskip
		\noindent\emph{Step 3: reduce to flats of dimension at most two.}
		Suppose first that $\dim X\geq3$. Then $\dim(X\cap H_a\cap H_b) \geq 1$, and $X\cap U_s$ is translation invariant under any nonzero vector in this space, so $\mathrm H_{X\cap U_s}=0$ by Lemma~\ref{pointed_vanishing_lemma}. Suppose that $\dim X=2$ but $L_a|_X$ and $L_b|_X$ are linearly dependent. If both restrictions vanish, then $X\cap U_s$ is empty; otherwise it is invariant under translation by a nonzero rational vector in their common kernel. In either case its Hilbert series vanishes. In particular, this applies when $L_a$ and $L_b$ are proportional. Hence only
		one-dimensional flats and two-dimensional flats on which
		$L_a,L_b$ are independent can contribute to
		\eqref{partial_residue_flat_expansion}.
		
		\smallskip
		\noindent\emph{Step 4: the one-dimensional contributions.}
		If $\dim X=1$, then $X\cap U_s$ is either empty or $X-\{0\}$.
		So $\mathrm{H}_{X\cap U_s}$ is either $0$ or $\mathrm{H}_X-\mathrm{H}_{\{0\}}=-1$.
		Thus $\mathscr R_j(\mathrm H_{X\cap U_s}(p_s^\sigma)) \in \mathbb{Q} \frac{dz_j}{z_j}$ in this case.
		
		\smallskip
		\noindent\emph{Step 5: the two-dimensional contributions.}
		Let $\dim X=2$ and suppose that $L_a|_X,L_b|_X$ are independent.
		Write
		\begin{equation}\label{eq:crucial_obs}L_{\sigma^{-1}(m)}|_X=\alpha_mL_a|_X+\beta_mL_b|_X,\qquad
			\alpha_m,\beta_m\in\mathbb{Q}.\end{equation}
		Put
		$$q_1:=1,\qquad q_m:=z_2\cdots z_m\quad(m\geq2).$$
		Since $y_m=z_1q_m$, the restriction of the pullback functional is
		$$\pi^*(p_s^\sigma)|_X
		=z_1\bigl(A_X(\z)L_a+B_{X,s}(\z)L_b\bigr),$$
		where
		$$A_X:=\sum_{m\ne j}\alpha_mq_m,
		\qquad
		B_{X,s}:=\sum_{m\ne j}\beta_mq_m+s q_j.$$
		Since $a=\sigma^{-1}(j-1)$ and $b=\sigma^{-1}(j)$, equation \eqref{eq:crucial_obs}, together with the definition of $B_{X,s}$, gives		\begin{equation}\label{rank_two_distinguished_coefficients}
			[q_{j-1}]A_X=1,\quad[q_j]A_X=0,
			\qquad[q_{j-1}]B_{X,s}=0,\quad[q_j]B_{X,s}=s,
		\end{equation}
		where $[q_m]f$ denotes the coefficient of $q_m$ in the polynomial $f$.
		
		Since $X\cap U_s$ is the disjoint union of the interiors of two simplicial cones, each of which has one extreme ray in $X\cap H_a$ and the other in $X \cap H_b$, we may write (Proposition \ref{prop:Hilbert-series-properties}(c))
		$$\pi^*(\mathrm{H}_{X\cap U_s}(p_s^\sigma))=\frac{g(\z)}{(e^{a_X z_1 A_X}-1)(e^{b_X z_1 B_{X,s}}-1)},$$
		where $a_X,b_X\in\mathbb{Q}^\times$ and $g(\z)$ is a finite sum of $e^{z_1(c A_X+d B_{X,s})}$ ($c,d\in\mathbb{Q}$).
		Thus
		\begin{equation}\label{rank_two_z1_constant_term}
			\operatorname{CT}_{z_1=0}
			\bigl(\pi^*(\mathrm{H}_{X\cap U_s}(p_s^\sigma))\bigr)
			\in{\mathbb Q}+{\mathbb Q}\,\frac{A_X}{B_{X,s}}+{\mathbb Q}\,\frac{B_{X,s}}{A_X}.
		\end{equation}
		
		\smallskip
		\noindent\emph{Step 6: take the remaining constant terms.}
		Call $f\in \mathbb{Q}[z_2,\ldots,z_N]$ nested if it has the form
		$$f=\sum_{m=1}^Nc_mq_m,
		\qquad q_1=1,\quad q_m=z_2\cdots z_m.$$
		Note the following elementary observation: if $f,g$ are nested polynomials and $f$ satisfies either of the two coefficient patterns in
		\eqref{rank_two_distinguished_coefficients}, then
		$$\operatorname{CT}_{z_k=0\, (k\notin\{1,j\})}\frac{g}{f}
		\in\mathbb{Q}z_j^{-1}+\mathbb{Q}+\mathbb{Q}z_j.$$
		
		Both $A_X$ and $B_{X,s}$ are nested polynomials satisfying the two
		respective alternatives in \eqref{rank_two_distinguished_coefficients}.
		Applying the observation to $A_X/B_{X,s}$ and
		$B_{X,s}/A_X$ shows that the remaining constant terms in
		\eqref{rank_two_z1_constant_term} belong to
		$\mathbb{Q}z_j^{-1}+\mathbb{Q}+\mathbb{Q}z_j$. This completes the proof.
	\end{proof}
	
	\section{Derivative of $\xi_\L(s)$ at the origin}
	\begin{lemma}\label{hankel_bose_integral}
		For $A>0$, we have
		$$\FP \int_{0}^{\infty} \frac{1}{e^{Az}-1}\frac{dz}{z}
		=\frac{1}{2}\left(\gamma+\log A - \log(2\pi)\right).$$
	\end{lemma}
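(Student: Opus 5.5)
The plan is to compute the finite part through Corollary \ref{corollary_HDF_1dim}. That result turns the finite part into the constant term of a Mellin transform at $s=0$, and this Mellin transform is a classical Gamma--zeta product.

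\textbf{Reduction to $A=1$.} Substitute $z\mapsto z/A$. Then
$$\int_\varepsilon^\infty \frac{1}{e^{Az}-1}\frac{dz}{z}=\int_{A\varepsilon}^\infty\frac{1}{e^{z}-1}\frac{dz}{z}.$$
The integrand $\frac{1}{z(e^z-1)}=z^{-2}-\tfrac12 z^{-1}+O(1)$ has residue $-\tfrac12$ at the origin. The cutoff expansion in $A\varepsilon$ therefore has the form $\frac{1}{A\varepsilon}+\tfrac12\log(A\varepsilon)+C+O(\varepsilon)$. Re-expanding in $\varepsilon$ gives the relation
$$\FP\int_0^\infty \frac{1}{e^{Az}-1}\frac{dz}{z}=\FP\int_0^\infty\frac{1}{e^{z}-1}\frac{dz}{z}+\tfrac12\log A.$$
This produces the $\tfrac12\log A$ term, so it remains to show that the finite part at $A=1$ equals $\tfrac12(\gamma-\log 2\pi)$.

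\textbf{The case $A=1$ via Mellin transform.} Take $\omega=\frac{1}{e^z-1}\frac{dz}{z}$. This form is admissible: with $m=2$, the function $z^2\cdot\frac{1}{z(e^z-1)}=\frac{z}{e^z-1}$ is holomorphic near $[0,\infty)$ and decays rapidly. For $\Re s>1$, the standard identity $\int_0^\infty \frac{z^{s-1}}{e^z-1}dz=\Gamma(s)\zeta(s)$ gives
$$M_\omega(s)=\int_0^\infty \frac{z^{s}}{e^z-1}\frac{dz}{z}=\Gamma(s)\zeta(s).$$
By Corollary \ref{corollary_HDF_1dim}, $M_\omega(s)=\frac{\Res{z=0}\omega}{s}+\FP\int_0^\infty\omega+O(s)$, and both sides are meromorphic, so the identity persists by continuation.

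Now expand the right-hand side using $\Gamma(s)=\frac1s-\gamma+O(s)$, $\zeta(0)=-\tfrac12$ and $\zeta'(0)=-\tfrac12\log(2\pi)$:
$$\Gamma(s)\zeta(s)=-\frac{1}{2s}+\Bigl(\zeta'(0)-\gamma\zeta(0)\Bigr)+O(s)=-\frac1{2s}+\tfrac12\gamma-\tfrac12\log(2\pi)+O(s).$$
The residue $-\tfrac12$ matches the polar term, which serves as a check. Reading off the constant term gives $\FP\int_0^\infty\omega=\tfrac12(\gamma-\log 2\pi)$, and combining with the reduction step yields the lemma.

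\textbf{Main obstacle.} The only delicate point is the rescaling step: one must track how the $\log$ term in the cutoff expansion shifts under $\varepsilon\mapsto A\varepsilon$. Alternatively, one can bypass this by applying Corollary \ref{corollary_HDF_1dim} directly for general $A$, with $M_\omega(s)=A^{-s}\Gamma(s)\zeta(s)$. Expanding $A^{-s}=1-s\log A+O(s^2)$ and multiplying by the polar part $-\frac{1}{2s}$ contributes $+\tfrac12\log A$ to the constant term, so this route gives the whole formula in one step.
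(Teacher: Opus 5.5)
Your proposal is correct and follows essentially the paper's argument: the paper applies Corollary~\ref{corollary_HDF_1dim} directly to $M_A(s)=A^{-s}\Gamma(s)\zeta(s)$ and reads off the constant term, which is exactly the one-step route you describe at the end. Your preliminary rescaling to $A=1$ is also valid, since the residue $-\tfrac12$ is what produces the $\tfrac12\log A$ shift, but it becomes unnecessary once the factor $A^{-s}$ is kept in the Mellin transform.
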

	
	\begin{proof}
		Corollary \ref{corollary_HDF_1dim} implies that the stated Hadamard finite part equals the constant term in the Laurent expansion at $s=0$ of the function $M_{A}(s)$ defined by 
		$$M_{A}(s):=\int_0^\infty \frac{z^{s-1}dz}{e^{Az}-1}, \qquad \Re(s)>1.$$
		Since $M_A(s)=A^{-s}\Gamma(s)\zeta(s)$, its meromorphic continuation satisfies
		$$M_A(s)=-\frac{1}{2s}+\frac{1}{2}\bigl(\gamma+\log A-\log(2\pi)\bigr)+O(s).$$
		This proves the formula.
	\end{proof}
	
	We now restate and prove Theorem \ref{thm:conical_0_der}. \begin{theorem}\label{derivative_conical_zeta_zero}
		For each one-dimensional flat $X\in L(\A)$, choose a generator $u_X \in V$, defined up to sign, such that $\mathbb{Z} u_X = X\cap\mathbb Z^r$, and put
		$$m_X:=\prod_{\substack{1\leq k\leq N\\ L_k(u_X)\neq 0}}|L_k(u_X)|\in\mathbb Q_{>0}.$$
		Then
		$$\xi_\L'(0)=N\mu_\A(V,\{0\})\log(2\pi)+\sum_{\substack{X\in L(\A)\\\dim X=1}}\mu_\A(V,X)\log m_X.$$
	\end{theorem}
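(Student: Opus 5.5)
We apply Proposition~\ref{prop:I-values} with $n=0$ to the integral representation \eqref{conical_zeta_int_rep}, so that
$$\xi_\L'(0)=\frac{1}{N!}\sum_{\sigma\in\mathfrak S_N}\sum_{j=1}^N (N-j+1)\,\mathcal I_{\sigma,j,0}+N\gamma\,\xi_\L(0),$$
with $\xi_\L(0)=\mu_\A(V,\{0\})$ by Theorem~\ref{values_at_nonpositive_even}. Here $\mathcal I_{\sigma,j,0}=\FP\int_0^{R_j}\mathscr R_j(G_\L^\sigma)$.

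The terms with $j\ge2$ are handled by Proposition~\ref{laurent_partial_residue}: $\mathscr R_j(G_\L^\sigma)$ is a combination of $z_j^{-2},z_j^{-1},z_j^{0}$ times $dz_j$, so its finite part over $[0,1]$ is rational. Hence these terms contribute a rational number, which we must show vanishes. The natural route is the sign symmetry. Replacing all $y_m$ by $-y_m$ leaves every $z_m$ with $m\ge2$ fixed and flips $z_1$. The residue in $z_1$ of a term $c\,z_j^{k}\,d\z/\z$ is insensitive to this, so that alone is not decisive. Instead we will argue that the rational constant is forced to be zero by comparing with the unified formula. Since the final answer has no rational constant, we expect $\sum_\sigma\sum_{j\ge2}(N-j+1)\mathcal I_{\sigma,j,0}$ to cancel the $\gamma$-free rational leftovers coming from the $j=1$ part. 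Concretely, we would recompute the $j\ge2$ finite parts from Steps~4--6 of that proof: $\FP\int_0^1 z^{-2}dz=-1$, $\FP\int_0^1 z^{-1}dz=0$, $\int_0^1 dz=1$. We would then check via the averaging over $\sigma$ (pairing $\sigma$ with the permutation swapping $a,b$, i.e.\ exchanging the roles of $L_a$ and $L_b$, which sends $A_X/B_{X,s}$ to its inverse type) that the coefficients of $z_j^{-2}$ and $z_j^{0}$ cancel. This cancellation is the step I expect to be the main obstacle, and I would first test it on Example~\ref{Ex_1}.

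The main term is $j=1$, with weight $N$. By Proposition~\ref{one_sided_residue} and Lemma~\ref{hankel_bose_integral}, with $k=\sigma^{-1}(1)$,
$$\mathcal I_{\sigma,1,0}=\sum_{\substack{\dim X=1\\X\not\subset H_k}}2\mu_\A(V,X)\cdot\tfrac12\bigl(\gamma+\log|L_k(u_{X,k})|-\log 2\pi\bigr),$$
where $L_k(u_{X,k})>0$ and $|L_k(u_{X,k})|=|L_k(u_X)|$. By Lemma~\ref{Weisner_lemma}, $\sum_{X\not\subset H_k}\mu_\A(V,X)=-\mu_\A(V,\{0\})$. The $\gamma$ and $\log 2\pi$ parts therefore give $-\mu_\A(V,\{0\})(\gamma-\log 2\pi)$. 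Averaging over $\sigma$, each $k$ occurs $(N-1)!$ times, and the weight $N$ gives
$$\sum_{k=1}^N\Bigl[-\mu_\A(V,\{0\})(\gamma-\log2\pi)+\sum_{X\not\subset H_k}\mu_\A(V,X)\log|L_k(u_X)|\Bigr].$$
This equals
$$-N\gamma\mu_\A(V,\{0\})+N\mu_\A(V,\{0\})\log 2\pi+\sum_{\dim X=1}\mu_\A(V,X)\log m_X,$$
after exchanging the sums over $k$ and $X$ and using the definition of $m_X$. The $-N\gamma\mu_\A(V,\{0\})$ cancels the $+N\gamma\,\xi_\L(0)$ term. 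This yields the stated formula, provided the $j\ge2$ contributions vanish as argued above.
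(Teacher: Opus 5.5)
\textbf{The $j\ge2$ terms are not handled.} Your treatment of the $j=1$ term matches the paper's proof and is correct: Proposition \ref{one_sided_residue}, Lemma \ref{hankel_bose_integral}, Lemma \ref{Weisner_lemma}, the count of $(N-1)!$ permutations, and the cancellation of $\gamma$. The gap is that the $j\ge2$ contribution is never shown to vanish, and your primary argument for it is circular. You propose to deduce its vanishing from the shape of the final formula, which is the statement being proved.

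The premise of that argument is also false. Your own $j=1$ computation produces no rational constant, so there are no ``$\gamma$-free rational leftovers'' for the $j\ge2$ terms to cancel. They must vanish on their own, i.e.\ you need $\sum_{\sigma\in\mathfrak S_N}\mathcal I_{\sigma,j,0}=0$ for each fixed $j\ge2$. Your fallback, re-entering Steps 4--6 of Proposition \ref{laurent_partial_residue} and testing on Example \ref{Ex_1}, is a plan, not a proof.

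\textbf{The missing symmetry.} It should be applied to the whole function $G_\L^\sigma$, not to individual chamber pieces. Write $\mathscr R_j(G_\L^\sigma)=h_{\sigma,j}(z)\,\frac{dz}{z}$ and let $\tau_j=(j-1\;j)$.
\begin{itemize}
\item $G_\L^{\tau_j\sigma}$ is $G_\L^\sigma$ with $y_{j-1}$ and $y_j$ interchanged. Under $\y=\pi(\z)$, this interchange is $z_j\mapsto z_j^{-1}$ together with multiplying the neighbouring variables $z_{j\pm1}$ by $z_j$. The multiplication does not change constant terms in those variables, so $h_{\tau_j\sigma,j}(z)=h_{\sigma,j}(z^{-1})$.
\item Proposition \ref{laurent_partial_residue} makes $h_{\sigma,j}=c_{-1}z^{-1}+c_0+c_1z$ a Laurent polynomial. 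Hence $\FP\int_0^1 h_{\sigma,j}(z)\frac{dz}{z}=c_1-c_{-1}$ (your finite-part values are right), and this quantity changes sign under $z\mapsto z^{-1}$.
\item Therefore $\mathcal I_{\tau_j\sigma,j,0}=-\mathcal I_{\sigma,j,0}$. Since $\sigma\mapsto\tau_j\sigma$ permutes $\mathfrak S_N$, the sum over $\sigma$ equals its own negative and is zero.
\end{itemize}

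Your instinct to pair $\sigma$ with the permutation exchanging the roles of $L_a$ and $L_b$ is the right pairing. The mechanism, however, is not that ``the coefficients of $z_j^{-2}$ and $z_j^0$ cancel'' within a single term. Rather, the $z^{-2}\,dz$-coefficient for $\sigma$ equals the $dz$-coefficient for $\tau_j\sigma$, so the finite parts cancel in pairs. With this step supplied, your argument coincides with the paper's proof.
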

	
	\begin{proof}
		Recall our notation $G_\L(\y):=\sum_{D\in\Ch(\A)}\mathrm H_D(l_D)$,
		so that $I_{G_\L}(s)=\xi_\L(s)$. Proposition~\ref{prop:I-values}, applied at
		$n=0$, and Theorem~\ref{values_at_nonpositive_even} give
		\begin{equation}\label{xi_derivative_start}
			\xi_\L'(0) =\frac{1}{N!}\sum_{\sigma\in\mathfrak S_N}
			\sum_{j=1}^N(N-j+1)\mathcal I_{\sigma,j,0}
			+N\gamma\mu_\A(V,\{0\}),\end{equation}
		where 
		$$\mathcal{I}_{\sigma,j,0} = \FP\int_{0}^{R_j} \Res{z_k=0 (k\ne j)}\pi^\ast\left(G_\L^\sigma(\y) \frac{d\y}{\y}\right),\qquad R_1=\infty, \; R_{j}=1 \; (j\ge 2).$$
		First we claim that \begin{equation}\label{aux_2}\sum_{\sigma\in\mathfrak S_N} \mathcal{I}_{\sigma,j,0} = 0,\qquad j\geq 2.\end{equation} To see this, write $$\mathscr{R}_j(G_\mathcal{L}^\sigma) = \Res{z_k=0 (k\neq j)}\pi^\ast\left(G_\L^\sigma(\y) \frac{d\y}{\y}\right) = h_{\sigma,j}(z_j)\frac{dz_j}{z_j}.$$
		Let $\tau_j=(j-1\;j) \in \mathfrak{S}_N$ be the $2$-cycle that swaps $j$ and $j-1$. Then swapping $y_{j-1}$ and $y_j$ is equivalent to changing $z_j$ to $z_j^{-1}$ and multiplying the neighboring $\z$-variables by $z_j$; the latter does not affect the residues in those variables. Hence
		$$h_{\tau_j \sigma,j}(z)=h_{\sigma,j}(z^{-1}).$$
		Moreover, by Proposition~\ref{laurent_partial_residue}, $h_{\sigma,j}(z)$ is of the form
		$h_{\sigma,j}(z)=c_{-1}z^{-1}+c_0+c_1z \ (c_i\in \mathbb{Q}).$
		By the definition of the Hadamard finite part,
		$$\mathcal{I}_{\sigma,j,0} = \FP \int_0^1 h_{\sigma,j}(z)\frac{dz}{z}=-c_{-1}+c_1.$$
		Replacing $z$ by $z^{-1}$ negates this sum, so $\mathcal I_{\tau_j \sigma,j,0}=-\mathcal I_{\sigma,j,0}$.
		However, 
		$$
		\sum_{\sigma\in\mathfrak S_N}\mathcal I_{\sigma,j,0}=\sum_{\sigma\in\mathfrak S_N}\mathcal I_{\tau_j \sigma,j,0},
		$$
		since both are summing over all elements of $\mathfrak{S}_N$. Proving our claim \eqref{aux_2}, therefore equation \eqref{xi_derivative_start} becomes
		\begin{equation}\label{aux_3}\xi_\L'(0)=\frac{N}{N!}\sum_{\sigma\in\mathfrak S_N}\mathcal I_{\sigma,1,0}+N\gamma\mu_\A(V,\{0\}).\end{equation}
		
		Write $k=\sigma^{-1}(1)$, recall the vector $u_{X,k}\in V$ defined before Proposition \ref{one_sided_residue}. Applying that proposition gives
		$$\begin{aligned}\label{first_logarithmic_integral}
			\mathcal I_{\sigma,1,0}
			&=\sum_{\substack{X\in L(\A)\\\dim X=1,\ X\not\subset H_k}} 2\mu_\A(V,X) \left(\FP\int_{0}^{\infty}\frac{1}{e^{z_1 L_k(u_{X,k})}-1}\frac{dz_1}{z_1}\right) \\
			&= \sum_{\substack{X\in L(\A)\\\dim X=1,\ X\not\subset H_k}} \mu_\A(V,X) \left(\gamma+\log L_k(u_{X,k})-\log(2\pi)\right),
		\end{aligned}$$
		where we also used Lemma \ref{hankel_bose_integral} for evaluation of the last integral. There are $(N-1)!$ permutations satisfying $\sigma^{-1}(1)=k$, \eqref{aux_3} then becomes
		\begin{equation}\label{aux_4}
			\xi_\L'(0) = \sum_{k=1}^N
			\sum_{\substack{X\in L(\A)\\\dim X=1,\ X\not\subset H_k}}\mu_\A(V,X) \left(\gamma+\log L_k(u_{X,k})-\log(2\pi)\right) + N \gamma \mu_{\A}(V,\{0\}).
		\end{equation}
		The identity $$\sum_{\substack{X\in L(\A)\\\dim X=1,\ X\not\subset H_k}}\mu_\A(V,X)=-\mu_\A(V,\{0\})$$
		from Lemma \ref{Weisner_lemma} says that the two terms involving Euler's constant cancel in \eqref{aux_4}, and
		\begin{equation*}\xi_\L'(0)  = N \mu_\A(V,\{0\}) \log(2\pi) + \sum_{\substack{X\in L(\A)\\\dim X=1}}
			\mu_\A(V,X)\log \left(\prod_{\substack{1\leq k\leq N\\X\not\subset H_k}}|L_k(u_{X,k})|\right).
		\end{equation*}
		Since $u_{X,k}\in \{\pm u_{X}\}$ for each $k$ with $X\not\subset H_k$, the argument of the logarithm is precisely $m_X$. This proves the theorem.
	\end{proof}

	\begin{example}\label{Ex_2}
		Let $V=\mathbb R^3$ and consider $$\xi_\L(s) := \sum_{\boldsymbol{x} \in \mathbb{Z}^3}^{} {'} \frac{1}{\left|x_1x_2x_3(2x_1+x_2)(x_1+2x_2+x_3)(x_1+2x_3)\right|^{s}},$$ where $\L=(L_1,\ldots,L_6)$ is given by
		$$L_1=x_1,\quad L_2=x_2,\quad L_3=x_3,\quad L_4=2x_1+x_2,\quad L_5=x_1+2x_2+x_3,\quad L_6=x_1+2x_3.$$
		Writing $H_i=\ker L_i$, the eleven one-dimensional flats, together with primitive generators $u_X$ and the data entering Theorem~\ref{derivative_conical_zeta_zero}, are as follows:
		$$
		\begin{array}{c|c|c|c}
			X & u_X & \mu_\A(V,X) & m_X \\
			\hline
			H_1\cap H_2\cap H_4 & (0,0,1) & 2 & 2 \\
			H_1\cap H_3\cap H_6 & (0,1,0) & 2 & 2 \\
			H_1\cap H_5 & (0,1,-2) & 1 & 8 \\
			H_2\cap H_3 & (1,0,0) & 1 & 2 \\
			H_2\cap H_5 & (1,0,-1) & 1 & 2 \\
			H_2\cap H_6 & (2,0,-1) & 1 & 8 \\
			H_3\cap H_4 & (1,-2,0) & 1 & 6 \\
			H_3\cap H_5 & (2,-1,0) & 1 & 12 \\
			H_4\cap H_5 & (1,-2,3) & 1 & 42 \\
			H_4\cap H_6 & (2,-4,-1) & 1 & 56 \\
			H_5\cap H_6 & (4,-1,-2) & 1 & 56
		\end{array}
		$$
		For example, for $X=H_4\cap H_5$, we may take $u_X=(1,-2,3)$, for which
		$$(L_1(u_X),\ldots,L_6(u_X))=(1,-2,3,0,0,7),$$
		and hence $m_X=1\cdot2\cdot3\cdot7=42$. This flat exhibits all the "bad primes" $2,3,7$. Also $\mu_A(V,\{0\}) = -8$. Therefore Theorem~\ref{derivative_conical_zeta_zero} gives
		$$\xi_\L'(0)=-48\log(2\pi) +22\log 2 + 3\log 3 + 3\log 7.$$
		In the attachment, we provide a SageMath code, which makes use of its built-in hyperplane arrangement functionalities, to calculate $\xi_\L(0)$ and $\xi_\L'(0)$ for general $\L$.
	\end{example}
	
	We now restate and prove Corollary \ref{cor:intro-bad-primes}.
	\begin{corollary}
		Assume the linear forms $L_1,\ldots,L_N$ have integral coefficients. Let $A$ be the $N\times r$ integral matrix obtained from their coefficients. We say a prime $p$ is bad if $p$ divides one of its non-zero $r\times r$ minors. Then
		$$\xi_\L'(0) \in \mathbb{Z} \log(2\pi) + \bigoplus_{p \text{ bad}} \mathbb{Z}\log p.$$
	\end{corollary}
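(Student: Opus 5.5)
The plan is to start from Theorem \ref{derivative_conical_zeta_zero}. Since $\mu_\A(V,\{0\})$ and every $\mu_\A(V,X)$ are integers, $N\mu_\A(V,\{0\})\log(2\pi)\in\mathbb{Z}\log(2\pi)$. The claim therefore reduces to the following: for every one-dimensional flat $X$, the rational number $m_X$ is a product of integer powers of bad primes. In fact I will show that $m_X$ is a positive integer all of whose prime factors divide some nonzero $r\times r$ minor of $A$.

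First I will make $u_X$ explicit. The flat $X$ is one-dimensional, so $X=\bigcap_{k\in S}H_k$ for some set $S$ of $r-1$ forms whose coefficient rows are linearly independent. Let $B$ be the $(r-1)\times r$ integer submatrix of $A$ formed by these rows. Let $w\in\mathbb{Z}^r$ be the vector of signed maximal minors of $B$, so that $w_i=(-1)^{i}\det B^{(i)}$, where $B^{(i)}$ is $B$ with column $i$ deleted. Cramer's rule gives $Bw=0$, and $w\neq 0$ because $B$ has rank $r-1$. Hence $w\in X\cap\mathbb{Z}^r$, and $w=g\,u_X$ for a positive integer $g$, namely the gcd of the entries of $w$.

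The key observation is that $L_k(w)=\det\begin{pmatrix}B\\ a_k\end{pmatrix}$ up to sign, where $a_k$ is the coefficient row of $L_k$. This is the Laplace expansion along the last row. For $X\not\subset H_k$, this is a \emph{nonzero} $r\times r$ minor of $A$. So $L_k(u_X)=L_k(w)/g$, and $L_k(u_X)$ is an integer because $u_X\in\mathbb{Z}^r$ and $L_k$ has integer coefficients. It is a nonzero integer dividing a nonzero $r\times r$ minor of $A$. Every prime factor of $|L_k(u_X)|$ is therefore bad.

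Consequently $m_X=\prod_{k:\,X\not\subset H_k}|L_k(u_X)|$ is a positive integer whose prime factorization involves only bad primes. This gives $\log m_X\in\bigoplus_{p\text{ bad}}\mathbb{Z}\log p$. Summing with the integer weights $\mu_\A(V,X)$ finishes the proof.

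The only mildly delicate point is the divisibility step. I avoid any gcd analysis by using integrality of $u_X$ directly: $L_k(u_X)\in\mathbb{Z}\setminus\{0\}$ together with $g\,L_k(u_X)=\pm(\text{nonzero minor})$.
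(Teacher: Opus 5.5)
Your proof is correct, and it takes a genuinely different route from the paper's.

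The paper argues locally, one good prime $p$ at a time:
\begin{itemize}
\item Since $\det(L_{i_1},\ldots,L_{i_{r-1}},L_k)$ is a $p$-adic unit, these forms give a $\mathbb{Z}_p$-basis of the dual of $\mathbb{Z}_p^r$.
\item This produces $v\in\mathbb{Z}_p^r\cap(X\otimes\mathbb{Q}_p)=\mathbb{Z}_p u_X$ with $L_k(v)=1$.
\item Writing $v=a\,u_X$ gives $a\,L_k(u_X)=1$, so $L_k(u_X)$ is a $p$-adic unit.
\end{itemize}

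You instead argue globally over $\mathbb{Z}$. You take the vector $w$ of signed maximal minors of $B$ and apply Laplace expansion. This gives the explicit identity $g\,L_k(u_X)=\pm\det\begin{pmatrix}B\\ a_k\end{pmatrix}$, where $g$ is the gcd of the maximal minors of $B$.

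Your route is more elementary and slightly sharper. It yields an honest divisibility $L_k(u_X)\mid\det\begin{pmatrix}B\\ a_k\end{pmatrix}$ in $\mathbb{Z}$, and hence a closed formula for $m_X$ in terms of minors of $A$. The paper's argument, as written, only shows that every prime factor of $L_k(u_X)$ divides that minor, though this is all the corollary needs.

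The paper's local argument has its own advantage: it is coordinate-free. It phrases everything through the lattice $P_p$ and its dual, so it transfers verbatim to an arbitrary full-rank lattice $\Lambda$ without choosing a basis or forming cofactor vectors. For the statement as given, both arguments establish the same key fact.
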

	\begin{proof}
		Let $X$ be a one-dimensional flat given by $H_{i_1}\cap \cdots \cap H_{{i_{r-1}}}$. It suffices to show that $L_k(u_X) \in \mathbb{Q}^\times$ is a $p$-adic unit whenever $p$ is good and $X\not\subset H_k$. This follows from a straightforward $p$-adic argument. \par
		More precisely, let $P_p:= \mathbb{Z}^r \otimes_{\mathbb Z}\mathbb Z_p$. We regard the rows of $A$, equivalently the forms $L_1,\ldots,L_N$, as elements of the dual $P_p^\ast = \operatorname{Hom}(P_p,\mathbb{Z}_p)$. Since $L_{i_1},\ldots,L_{i_{r-1}},L_k$ are linearly independent, the determinant of their coefficient matrix is therefore a nonzero $r\times r$ minor of $A$, which is a $p$-adic unit by assumption, hence they form a
		$\mathbb Z_p$-basis of $P_p^\ast$. Therefore there exists $v\in P_p$ such that
		$$L_{i_j}(v)=0\quad(1\leq j\leq r-1),\qquad L_k(v)=1.$$
		The first set of equalities shows that
		$v\in X\otimes_{\mathbb Q}\mathbb Q_p$. Also, from the definition of $u_X$, we have
		$$P_p\cap\bigl(X\otimes_{\mathbb Q}\mathbb Q_p\bigr)=\mathbb Z_pu_X.$$
		It follows that $v=a u_X$ for some $a\in\mathbb Z_p$. Therefore $1=L_k(v)=aL_k(u_X).$
		Both factors belong to $\mathbb Z_p$, so $L_k(u_X)$ is a $p$-adic unit, as desired.
	\end{proof}
	
	\section{Derivative of Witten zeta function at origin}
	Adopting notations associated to root systems in Section \ref{root_system_subsection}. For each positive integer $d$, recall the number $m_\Phi(d)$ defined in the introduction $$m_\Phi(d) := \sum_{i=1}^r \frac{E(\Phi_{\Delta-\{\alpha_i\}})}{|W_{\Delta-\{\alpha_i\}}|} \#\{ \gamma\in \Phi^+ : (\lambda_i, \gamma^\vee) = d\}.$$
	Note that $m_\Phi(d) \neq 0$ for only finitely many $d$. More precisely, if the highest root of $\Phi$ is $c_1 \alpha_1 + \cdots + c_r \alpha_r,$
	then $m_\Phi(d) = 0$ for $d > \max(c_1,\ldots,c_r)$.

	\begin{proof}[Proof of Theorem \ref{thm:witten_0_der}]
		Our goal is to prove $$\xi_\Phi'(0) = (-1)^r \frac{N E(\Phi)}{|W|} \log(2\pi) + \frac{(-1)^{r+1}}{2} \sum_{d\geq 1} m_\Phi(d) \log d.$$
		Theorem \ref{derivative_conical_zeta_zero} gives
		\begin{equation}\label{aux_8}\xi_\Phi'(0)= (-1)^r \frac{N E(\Phi)}{|W|} \log(2\pi)+ \frac{1}{|W|}\sum_{\substack{X\in L(\A)\\\dim X=1}}\mu_\A(V,X)\log m_X,\end{equation}
		where $$m_X=\prod_{\substack{\gamma\in \Phi^+\\(u_X,\gamma^\vee)\ne0}}|(u_X,\gamma^\vee)|$$
		and $u_X$ is a primitive lattice generator of $X$.
		Write $J_i:=\Delta-\{\alpha_i\}$. By Lemma \ref{aux_lemma}, $X$ is of the form $w\lambda_i$ and the stabilizer of $\lambda_i$ is $W_{J_i}$. Put $X_i := \mathbb{R}\lambda_i$, then
		\begin{equation}\label{aux_7}\sum_{\substack{X\in L(\A)\\\dim X=1}}
			\mu_\A(V,X)\log m_X=\frac12\sum_{i=1}^r\sum_{w\in W/W_{J_i}}\mu_\A(V,wX_i)\log m_{wX_i},\end{equation}
		here the factor $\frac12$ arises because $\mathbb{R}w\lambda_i$ and $-\mathbb{R}w\lambda_i$ define the same flat. The Weyl group preserves the intersection lattice, so Lemma~\ref{para_exp} gives
		$$\mu_\A(V,wX_i)=(-1)^{r-1}E(\Phi_{J_i}),\qquad w\in W.$$
		Moreover, the weight lattice is preserved by $W$. Thus $w\lambda_i$ is a primitive lattice generator of
		$wX_i$. We consequently have
		$$m_{wX_i}=\prod_{\substack{\gamma\in\Phi^+ \\ (w\lambda_i,\gamma^\vee)\neq0}}
		\left|(w\lambda_i,\gamma^\vee)\right| =\prod_{\substack{\gamma\in\Phi^+\\
				(\lambda_i,\gamma^\vee)>0}}(\lambda_i,\gamma^\vee):= M_i.$$
		Therefore the inner sum on the RHS of \eqref{aux_7} is independent of $w$, so
		$$\sum_{\substack{X\in L(\A)\\\dim X=1}}
		\mu_\A(V,X)\log m_X
		=\frac{(-1)^{r-1} |W| }{2}\sum_{i=1}^r\frac{E(\Phi_{J_i})}{|W_{J_i}|}\log M_i.$$
		Now,
		$$\log M_i=\sum_{\substack{\gamma\in\Phi^+\\(\lambda_i,\gamma^\vee)>0}}
		\log(\lambda_i,\gamma^\vee)\\=\sum_{d\geq1}
		\#\{\gamma\in\Phi^+:(\lambda_i,\gamma^\vee)=d\}\log d.$$
		By the definition of $m_\Phi(d)$, this proves
		$$\frac{1}{|W|}\sum_{\substack{X\in L(\A)\\\dim X=1}}\mu_\A(V,X)\log m_X
		=\frac{(-1)^{r+1}}{2}\sum_{d\geq1}m_\Phi(d)\log d.$$
		Plugging this into \eqref{aux_8} completes the proof.
	\end{proof}
	
	Although not necessary for our argument on finding $\xi_\Phi'(0)$, we present an interesting property of the numbers $m_\Phi(d)$.
	\begin{corollary}\label{m_Phi_sum_coro}
		We have
		$$\sum_{d\geq 1}m_\Phi(d) = \frac{|\Phi| E(\Phi)}{|W|}.$$
	\end{corollary}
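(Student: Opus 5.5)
The plan is to rewrite the left-hand side as a weighted sum over the one-dimensional flats of the Weyl arrangement $\A$, and then evaluate that sum with Lemma~\ref{Weisner_lemma}, summed over all hyperplanes.

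\emph{Step 1: sum over $d$ first.} Write $J_i:=\Delta-\{\alpha_i\}$. Summing $\#\{\gamma\in\Phi^+:(\lambda_i,\gamma^\vee)=d\}$ over all $d\ge1$ gives $\#\{\gamma\in\Phi^+:(\lambda_i,\gamma^\vee)>0\}$. Since $(\lambda_i,\gamma^\vee)\ge 0$ for positive roots, with equality exactly when $\gamma\in\Phi_{J_i}^+$, this count equals $N-N_{J_i}$, where $N_{J_i}:=|\Phi_{J_i}^+|$. Hence
$$\sum_{d\ge1}m_\Phi(d)=\sum_{i=1}^r\frac{E(\Phi_{J_i})}{|W_{J_i}|}\,(N-N_{J_i}).$$

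\emph{Step 2: Weisner's lemma summed over all hyperplanes.} Sum the identity of Lemma~\ref{Weisner_lemma} over all $N$ hyperplanes $H\in\A$ and exchange the order of summation. This gives
$$\sum_{\dim X=1}\mu_\A(V,X)\,\#\{H\in\A: X\not\subset H\}=-N\mu_\A(V,\{0\})=-N(-1)^rE(\Phi),$$
where the last equality is Lemma~\ref{prod_exp}.

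\emph{Step 3: evaluate the left side of Step 2 by orbits.} By Lemma~\ref{aux_lemma}, every one-dimensional flat has the form $X=\mathbb{R}w\lambda_i$. For such a flat, Lemma~\ref{para_exp} and $W$-invariance give $\mu_\A(V,X)=(-1)^{r-1}E(\Phi_{J_i})$. The hyperplanes containing $X$ are the $W$-translates of those containing $\mathbb{R}\lambda_i$, namely the $H_\gamma$ with $\gamma\in\Phi_{J_i}^+$. Therefore $\#\{H\in\A: X\not\subset H\}=N-N_{J_i}$.

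\emph{Main obstacle: counting flats correctly.} The delicate point is to count each flat exactly once. Every ray of the form $\mathbb{R}_{>0}w\lambda_i$ appears exactly once as $(i,wW_{J_i})$ ranges over pairs of an index and a coset in $W/W_{J_i}$. Each line $X$ contains exactly two such rays, even when $-\lambda_i$ lies in the orbit of some $\lambda_j$ with $j\ne i$. So, exactly as in \eqref{aux_7}, the sum over flats equals $\tfrac12\sum_i\sum_{w\in W/W_{J_i}}$.

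\emph{Step 4: conclude.} Steps 2 and 3 together give
$$\frac{(-1)^{r-1}|W|}{2}\sum_{i=1}^r\frac{E(\Phi_{J_i})}{|W_{J_i}|}(N-N_{J_i})=(-1)^{r-1}NE(\Phi).$$
Dividing by $(-1)^{r-1}|W|/2$ and using $2N=|\Phi|$ turns the right side into $|\Phi|E(\Phi)/|W|$. Combined with Step 1, this is the claim.
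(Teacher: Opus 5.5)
Your proposal is correct and follows the paper's proof essentially step for step. It sums over $d$ to get $N-N_{J_i}$, sums Lemma~\ref{Weisner_lemma} over all $N$ hyperplanes, and evaluates the resulting flat sum by $W$-orbits with the factor $\frac{1}{2}$ as in \eqref{aux_7}. Your justification of that factor is in fact more careful than the paper's: each line contains exactly two rays $\mathbb{R}_{>0}w\lambda_i$, and each such ray is indexed by a unique pair $(i,wW_{J_i})$.
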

	\begin{proof}
		As in the previous proof, let $J_i:=\Delta-\{\alpha_i\}$, also let $N_i := |\Phi_{J_i}^+|$. Then
		$$\sum_{d\geq1}\#\{\gamma\in\Phi^+:(\lambda_i,\gamma^\vee)=d\}=N-N_i.$$
		Hence \begin{equation}\label{aux_9}\sum_{d\geq1}m_\Phi(d)=
			\sum_{i=1}^r\frac{E(\Phi_{J_i})}{|W_{J_i}|}(N-N_i).\end{equation}
		For every reflecting hyperplane $H_\gamma, \gamma\in \Phi^+$, Lemma~\ref{Weisner_lemma} gives
		$$\sum_{\substack{\dim X=1\\X\not\subset H_\gamma}}
		\mu_{\mathcal A}(V,X)=-\mu_{\mathcal A}(V,\{0\}).$$
		Summing over all $\gamma \in \Phi^+$ and multiplying by $(-1)^{r-1}$ gives
		$$NE(\Phi)= \sum_{\dim X=1}|\mu_{\mathcal A}(V,X)|
		\#\{\gamma\in\Phi^+:X\not\subset H_\gamma\}.$$
		Every one-dimensional flat $X$ is of the form $w\lambda_i$, and the same argument leading to \eqref{aux_7} gives
		$$NE(\Phi) = \frac{1}{2} \sum_{i=1}^r\sum_{w\in W/W_{J_i}} |\mu_{\mathcal A}(V,\mathbb{R}w\lambda_i)|
		\#\{\gamma\in\Phi^+: w\lambda_i \notin H_\gamma\}.$$
		Since $|\mu_{\mathcal A}(V,\mathbb{R}w\lambda_i)| = E(\Phi_{J_i})$ and $$\#\{\gamma\in\Phi^+: w\lambda_i \notin H_\gamma\} = \#\{\gamma\in\Phi^+: \lambda_i \notin H_\gamma\} = N - N_i,$$
		the above becomes
		$$NE(\Phi) = \frac{|W|}{2}\sum_{i=1}^r \frac{E(\Phi_{J_i})}{|W_{J_i}|}(N-N_i).$$
		Comparing with \eqref{aux_9} completes the proof.
	\end{proof}
	
	\subsection{Values of $m_\Phi(d)$ for each irreducible root system}
	Finally, we shall find the values of $m_\Phi(d)$ for each irredducible root system, which are recorded in Table \ref{table:m-Phi-values}.
	\begin{table}[h]
		\centering
		\renewcommand{\arraystretch}{1.45}
		\resizebox{0.8\textwidth}{!}{%
			\begin{tabular}{c|cccccc}
				\hline
				$\Phi$ & $m_\Phi(1)$ & $m_\Phi(2)$ & $m_\Phi(3)$ & $m_\Phi(4)$ & $m_\Phi(5)$ & $m_\Phi(6)$ \\
				\hline
				$A_r$
				& $r$
				& \cellcolor{gray!20}\textcolor{gray}{$-$}
				& \cellcolor{gray!20}\textcolor{gray}{$-$}
				& \cellcolor{gray!20}\textcolor{gray}{$-$}
				& \cellcolor{gray!20}\textcolor{gray}{$-$}
				& \cellcolor{gray!20}\textcolor{gray}{$-$} \\
				$B_r$
				& {\footnotesize $\displaystyle \frac{r+1}{2}+\frac{2(r-1)}{3}a_r$}
				& {\footnotesize $\displaystyle -\frac{r+1}{2}+\frac{r+2}{3}a_r$}
				& \cellcolor{gray!20}\textcolor{gray}{$-$}
				& \cellcolor{gray!20}\textcolor{gray}{$-$}
				& \cellcolor{gray!20}\textcolor{gray}{$-$}
				& \cellcolor{gray!20}\textcolor{gray}{$-$} \\
				$C_r$
				& {\footnotesize $\displaystyle \frac{2r+1}{3}a_r$}
				& {\footnotesize $\displaystyle \frac{r-1}{3}a_r$}
				& \cellcolor{gray!20}\textcolor{gray}{$-$}
				& \cellcolor{gray!20}\textcolor{gray}{$-$}
				& \cellcolor{gray!20}\textcolor{gray}{$-$}
				& \cellcolor{gray!20}\textcolor{gray}{$-$} \\
				$D_r$
				& {\footnotesize $\displaystyle r-1+\frac{2(r-2)}{3}a_{r-1}$}
				& {\footnotesize $\displaystyle 1-r+\frac{r+1}{3}a_{r-1}$}
				& \cellcolor{gray!20}\textcolor{gray}{$-$}
				& \cellcolor{gray!20}\textcolor{gray}{$-$}
				& \cellcolor{gray!20}\textcolor{gray}{$-$}
				& \cellcolor{gray!20}\textcolor{gray}{$-$} \\
				$G_2$
				& $3$ & $1$ & $1$
				& \cellcolor{gray!20}\textcolor{gray}{$-$}
				& \cellcolor{gray!20}\textcolor{gray}{$-$}
				& \cellcolor{gray!20}\textcolor{gray}{$-$} \\
				$F_4$
				& $\frac{79}{8}$ & $5$ & $\frac{2}{3}$ & $\frac{1}{2}$
				& \cellcolor{gray!20}\textcolor{gray}{$-$}
				& \cellcolor{gray!20}\textcolor{gray}{$-$} \\
				$E_6$
				& $\frac{46}{3}$ & $\frac{5}{3}$ & $\frac{1}{9}$
				& \cellcolor{gray!20}\textcolor{gray}{$-$}
				& \cellcolor{gray!20}\textcolor{gray}{$-$}
				& \cellcolor{gray!20}\textcolor{gray}{$-$} \\
				$E_7$
				& $\frac{1295}{48}$ & $\frac{2713}{512}$ & $\frac{5}{6}$ & $\frac{1}{8}$
				& \cellcolor{gray!20}\textcolor{gray}{$-$}
				& \cellcolor{gray!20}\textcolor{gray}{$-$} \\
				$E_8$
				& $\frac{57673}{1152}$ & $\frac{155443}{9216}$ & $\frac{1643}{324}$
				& $\frac{55}{32}$ & $\frac{2}{5}$ & $\frac{1}{6}$ \\
				\hline
			\end{tabular}%
		}
		\caption{\small The values of $m_\Phi(d)$ for all irreducible root systems. Here
			$a_k:=\displaystyle \frac{k}{2^{2k-1}}\binom{2k}{k}$.}
		\label{table:m-Phi-values}
	\end{table}
	
	For type $A_r$, this follows directly from Corollary \ref{m_Phi_sum_coro}. For the five exceptional types, it is a simple computation (see the attached SageMath code). Therefore we focus on types $B_r, C_r, D_r$. 
	
	For $k\in\mathbb{Z}_{\ge 1}$, put $a_k:=\frac{k}{2^{2k-1}}\binom{2k}{k}$. We use the elementary identities
	$$\sum_{k=1}^K\frac{a_k}{k}=2a_{K+1}-2,
	\qquad\sum_{k=1}^Ka_k=\frac{2K}{3}a_{K+1},$$
	which follow immediately from $a_{k+1}=(2k+1)a_k/(2k)$ by telescoping.
	We also recall that
	$$
	\frac{E(A_q)}{|W(A_q)|}=\frac1{q+1},\qquad \frac{E(B_q)}{|W(B_q)|}=\frac{E(C_q)}{|W(C_q)|}=\frac{a_q}{2q},
	\qquad \frac{E(D_q)}{|W(D_q)|}=\frac{a_{q-1}}{2q}.$$
	
	We adopt the standard realization in $\mathbb{R}^r$
	with basis $\varepsilon_1,\ldots,\varepsilon_r$, where
	$$\alpha_i=\varepsilon_i-\varepsilon_{i+1}\qquad(1\le i<r),$$
	and $\alpha_r=\varepsilon_r,2\varepsilon_r,\varepsilon_{r-1}+\varepsilon_r$ for types $B_r,C_r,D_r$, respectively.
	For these three types, $(\lambda_i,\gamma^\vee)\in\{0,1,2\}$. Hence, by Corollary~\ref{m_Phi_sum_coro}, it remains only to determine $m_\Phi(2)$.
	
	\begin{itemize}[leftmargin=*]
		\item For $B_r$,
		\[
		\Phi^+=\{\varepsilon_i\pm\varepsilon_j:1\le i<j\le r\}
		\cup\{\varepsilon_i:1\le i\le r\}
		\]
		and
		\[
		\lambda_i=\varepsilon_1+\cdots+\varepsilon_i \quad (1\le i\le r-1),
		\qquad \lambda_r=\frac12 (\varepsilon_1+\cdots+\varepsilon_r).
		\]
		Hence
		\[
		\#\{\gamma\in\Phi^+:(\lambda_i,\gamma^\vee)=2\}
		=\frac{i(i+1)}2
		\quad(1\le i\le r-1)
		\]
		and the number is zero for $i=r$.
		Moreover $\Phi_{\Delta - \{\alpha_1\}}\cong B_{r-1}$,
		$\Phi_{\Delta-\{\alpha_i\}}\cong A_{i-1}\times B_{r-i}$ for
		$1<i<r$,
		and $\Phi_{\Delta- \{\alpha_r\}} \cong A_{r-1}$.
		Thus
		\[
		m_{B_r}(2)
		=\sum_{i=1}^{r-1}\frac{i+1}{4(r-i)}a_{r-i}
		=\sum_{k=1}^{r-1}\frac{r-k+1}{4k}a_k
		=\frac{r+2}{3}a_r-\frac{r+1}{2}.
		\]
		
		\item For $C_r$,
		\[
		\Phi^+=\{\varepsilon_i\pm\varepsilon_j:1\le i<j\le r\}
		\cup\{2\varepsilon_i:1\le i\le r\}
		\]
		and
		$\lambda_i=\varepsilon_1+\cdots+\varepsilon_i$ ($1\le i\le r$).
		Since
		\[
		\#\{\gamma\in\Phi^+:(\lambda_i,\gamma^\vee)=2\}
		=\binom{i}{2} \quad (1\le i\le r),
		\]
		we obtain
		\[
		m_{C_r}(2)
		=\sum_{i=1}^{r-1}\frac{i-1}{4(r-i)}a_{r-i}+\frac{r-1}{2}
		=\frac{r-1}{3}a_r.
		\]
		
		\item For $D_r$,
		we have $\Phi^+=\{\varepsilon_i\pm\varepsilon_j:1\le i<j\le r\}$,
		and
		$\lambda_i=\varepsilon_1+\cdots+\varepsilon_i$ ($1\le i\le r-2$),
		while
		\[
		\lambda_{r-1}
		=\frac12(\varepsilon_1+\cdots+\varepsilon_{r-1}-\varepsilon_r),
		\qquad
		\lambda_r
		=\frac12(\varepsilon_1+\cdots+\varepsilon_{r-1}+\varepsilon_r).
		\]
		Since
		\[
		\#\{\gamma\in \Phi^+:(\lambda_i,\gamma^\vee)=2\}
		=\binom{i}{2} \qquad (1<i<r-1)
		\]
		and the number is zero for $i=1,r-1,r$,
		we obtain
		$$m_{D_r}(2)
		=\sum_{i=2}^{r-2}\frac{(i-1)a_{r-i-1}}{4(r-i)} = \frac{r+1}{3}a_{r-1}-r+1.$$
	\end{itemize}
	
	\bibliographystyle{plain} 
	\bibliography{conical_ref.bib} 
	
\end{document}